\author{A.A. Vasil'eva}
\title{Entropy numbers of embedding operators of weighted Sobolev spaces
with weights that are functions of distance from some $h$-set}
\date{}
\begin{document}

\maketitle

\newenvironment{Biblio}{%
                  \renewcommand{\refname}{\footnotesize REFERENCES}%
                  }

\def\inff{\mathop{\smash\inf\vphantom\sup}}
\renewcommand{\le}{\leqslant}
\renewcommand{\ge}{\geqslant}
\newcommand{\sgn}{\mathrm {sgn}\,}
\newcommand{\inter}{\mathrm {int}\,}
\newcommand{\dist}{\mathrm {dist}}
\newcommand{\supp}{\mathrm {supp}\,}
\newcommand{\R}{\mathbb{R}}
\renewcommand{\C}{\mathbb{C}}
\newcommand{\Z}{\mathbb{Z}}
\newcommand{\N}{\mathbb{N}}
\newcommand{\Q}{\mathbb{Q}}
\theoremstyle{plain}
\newtheorem{Trm}{Theorem}
\newtheorem{trma}{Theorem}

\newtheorem{Def}{Definition}
\newtheorem{Cor}{Corollary}
\newtheorem{Lem}{Lemma}
\newtheorem{Rem}{Remark}
\newtheorem{Sta}{Proposision}
\newtheorem{Sup}{Assumption}
\newtheorem{Exa}{Example}
\renewcommand{\proofname}{\bf Proof}
\renewcommand{\thetrma}{\Alph{trma}}

\section{Introduction}

In this paper we obtain order estimates for entropy numbers of
embedding operator of weighted Sobolev spaces on a John domain
into weighted Lebesgue space. Estimates for $n$-widths of such
embeddings were recently obtained in \cite{vas_width_raspr,
vas_w_lim}.

\begin{Def}
Let $X$, $Y$ be normed spaces, and let $T:X\rightarrow Y$ be a
linear continuous operator. Entropy numbers of $T$ are defined by
$$
e_k(T)=\inf \left\{\varepsilon>0:\; \exists y_1, \, \dots, \,
y_{2^{k-1}}\in Y: \; T(B_X) \subset \cup_{i=1}^{2^{k-1}}
(y_i+\varepsilon B_Y) \right\}, \quad k\in \N.
$$
\end{Def}

For properties of entropy numbers, we refer the reader to the
books \cite{piet_op, carl_steph, edm_trieb_book}. Kolmogorov,
Tikhomirov, Birman and Solomyak \cite{kolm_tikh1, tikh_entr, birm}
studied properties of $\varepsilon$-entropy (this magnitude is
related to entropy numbers of embedding operators).

Estimates for entropy numbers of the embedding operator of $l_p^m$
into $l_q^m$ were obtained in the paper of Sch\"{u}tt
\cite{c_schutt} (see also \cite{edm_trieb_book}). Here $l_p^m$
$(1\le p\le \infty)$ is the space $\R^m$ with the norm
$$
\|(x_1, \, \dots , \, x_m)\| _q\equiv\|(x_1, \, \dots , \, x_m)\|
_{l_p^m}= \left\{
\begin{array}{l}(| x_1 | ^p+\dots+ | x_m | ^p)^{1/p},\text{ if
}p<\infty ,\\ \max \{| x_1 | , \, \dots, \, | x_m |\},\text{ if
}p=\infty .\end{array}\right .
$$
 Later Edmunds
and Netrusov \cite{edm_netr1}, \cite{edm_netr2} generalized this
result for vector-valued sequence spaces (in particular, for
sequence spaces with mixed norm). Haroske, Triebel, K\"{u}hn,
Leopold, Sickel, Skrzypczak \cite{kuhn4, kuhn5, kuhn_01,
kuhn_01_g, kuhn_05, kuhn_08, kuhn_leopold, kuhn_tr_mian, har94_1,
har94_2, haroske, haroske2, haroske3, har_tr05} studied the
problem of estimating entropy numbers of embeddings of weighted
sequence spaces or weighted Besov and Triebel--Lizorkin spaces.

Lifshits and Linde \cite{lif_linde} obtained estimates for entropy
numbers of two-weighted Hardy-type operators on a semiaxis (under
some conditions on weights). The similar problem for one-weighted
Riemann-Liouville operators was considered in the paper of
Lomakina and Stepanov \cite{step_lom}. In addition, Lifshits and
Linde \cite{lifs_m, l_l, l_l1} studied the problem of estimating
entropy numbers of two-weighted summation operators on a tree.

Triebel \cite{tr_jat} and Mieth \cite{mieth_15} studied the
problem of estimating entropy numbers of embedding operators of
weighted Sobolev spaces on a ball with weights that have
singularity at the origin.

Estimates of entropy numbers of weighted function spaces are
applied in spectral theory of some degenerate elliptic operators
(see, e.g., \cite{har94_1, har94_2, haroske4, har_tr05, kuhn4,
kuhn5, kuhn_leopold}) and in estimating the probability of small
deviation of Gaussian random functions (see, e.g.,
\cite{lif_linde, l_l, li_linde}).

The paper is organized as follows. In this section we introduce
notations and some basic definitions, and we conclude this section
with main result about estimates for entropy numbers of embeddings
of weighted Sobolev spaces. In \S 2 we formulate some known
results which will be required in the sequel. In \S 3 we obtain
upper estimates for entropy numbers of embedding operators of some
function spaces on a set with tree-like structure (the similar
results for $n$-widths are obtained in \cite{vas_width_raspr}). In
\S 4 we prove Theorems \ref{th1}, \ref{th2} and \ref{th3} about
estimates for entropy numbers of embedding operators of weighted
Sobolev spaces. In \S 5 we obtain estimates of entropy numbers of
two-weighted summation operators on a tree.

Let us give the definition of a John domain.

We denote by $AC[t_0, \, t_1]$ the space of absolutely continuous
functions on an interval $[t_0, \, t_1]$.

Let $B_a(x)$ be the closed euclidean ball of radius $a$ in $\R^d$
centered at the point $x$.
\begin{Def}
\label{fca} Let $\Omega\subset\R^d$ be a bounded domain, and let
$a>0$. We say that $\Omega \in {\bf FC}(a)$ if there exists a
point $x_*\in \Omega$ such that for any $x\in \Omega$ there exist
$T(x)>0$ and a curve $\gamma _x:[0, \, T(x)] \rightarrow\Omega$
with the following properties:
\begin{enumerate}
\item $\gamma _x\in AC[0, \, T(x)]$, $\left|\frac{d \gamma _x(t)}{dt}\right|=1$ a.e.,
\item $\gamma _x(0)=x$, $\gamma _x(T(x))=x_*$,
\item $B_{at}(\gamma _x(t))\subset \Omega$ for any $t\in [0, \, T(x)]$.
\end{enumerate}
\end{Def}

\smallskip

\begin{Def}
We say that $\Omega$ satisfies the John condition (and call
$\Omega$ a John domain) if $\Omega\in {\bf FC}(a)$ for some $a>0$.
\end{Def}

For a bounded domain the John condition is equivalent to the
flexible cone condition (see the definition in \cite{besov_il1}).
As examples of such domains we can take \begin{enumerate}
\item domains with Lipschitz boundary; \item the interior of the Koch
snowflake; \item domains $\Omega=\cup _{0\le t\le T}
B_{ct}(\gamma(t))$, where $\gamma:[0, \, T] \rightarrow \R^d$ is a
curve with natural parametrization and $c>0$. \end{enumerate}
Domains with zero inner angles do not satisfy the John condition.

Reshetnyak \cite{resh1, resh2} found the integral representation
for smooth functions defined on a John domain $\Omega$ in terms of
their derivatives of order $r$. It follows from this integral
representation that, for $p>1$, $1\le q<\infty$ and $\frac
rd+\frac 1q-\frac 1p\ge 0$ ($\frac rd+\frac 1q-\frac 1p>0$,
respectively) the class $W^r_p(\Omega)$ is continuously
(respectively, compactly) embedded in the space $L_q(\Omega)$
(i.e., the conditions of the continuous and compact embedding are
the same as for $\Omega=[0, \, 1]^d$).

Introduce the notion of $h$-set according to \cite{m_bricchi1}.

Denote by $\mathbb{H}$ the set of all nondecreasing positive
functions defined on $(0, \, 1]$.
\begin{Def}
\label{h_set} Let $\Gamma\subset \mathbb{R}^d$ be a non-empty
compact set, and let $h\in \mathbb{H}$. We say that $\Gamma$ is an
$h$-set if there are a constant $c_*\ge 1$ and a finite countably
additive measure $\mu$ on $\mathbb{R}^d$ such that ${\mathrm
{supp}\,} \mu=\Gamma$ and for any $x\in \Gamma$, $t\in (0, \, 1]$
\begin{align}
\label{c1htmu} c_*^{-1}h(t)\le \mu(B_t(x))\le c_* h(t).
\end{align}
\end{Def}

\begin{Exa}
Let $\Gamma \subset \R^d$ be a Lipschitz manifold of dimension
$k$, $0\le k<d$. Then $\Gamma$ is an $h$-set with $h(t)=t^k$.
\end{Exa}

\begin{Exa}
Let $\Gamma \subset \R^2$ the the Koch snowflake. Then $\Gamma$ is
an $h$-set with $h(t)=t^{\log 4/\log 3}$ (see \cite[p.
66--68]{p_mattila}).
\end{Exa}

Let us formulate the main result of this paper.

Everywhere below, we use the notation $\log x=\log_2 x$.

Let $|\cdot|$ be a norm on $\mathbb{R}^d$, and let $E, \,
E'\subset \mathbb{R}^d$, $x\in \mathbb{R}^d$. We set
$$
{\rm diam}_{|\cdot|}\, E=\sup \{|y-z|:\; y, \, z\in E\}, \;\; {\rm
dist}_{|\cdot|}\, (x, \, E)=\inf \{|x-y|:\; y\in E\}.
$$

Let $\Omega\in {\bf FC}(a)$ be a bounded domain, and let
$\Gamma\subset \partial \Omega$ be an $h$-set. Further we suppose
that in some neighborhood of zero the function $h\in \mathbb{H}$
is defined by
\begin{align}
\label{def_h} h(t)=t^{\theta}|\log t|^{\gamma}\tau(|\log t|),
\;\;\; 0\le \theta<d,
\end{align}
where $\tau:(0, \, +\infty)\rightarrow (0, \, +\infty)$ is an
absolutely continuous function such that
\begin{align}
\label{yty} \frac{t\tau'(t)}{\tau(t)} \underset{t\to+\infty}{\to}
0.
\end{align}

Let $1<p\le \infty$, $1\le q<\infty$, $r\in \N$, $\delta:=r+\frac
dq-\frac dp>0$, $\beta_g$, $\beta_v\in \R$, $g(x)=\varphi_g({\rm
dist}_{|\cdot|}(x, \, \Gamma))$, $v(x)=\varphi_v({\rm
dist}_{|\cdot|}(x, \, \Gamma))$,
\begin{align}
\label{ghi_g0} \varphi_g(t)=t^{-\beta_g}|\log
t|^{-\alpha_g}\rho_g(|\log t|), \;\; \varphi_v(t)=t^{-\beta_v}
|\log t|^{-\alpha_v}\rho_v(|\log t|),
\end{align}
where $\rho_g$ and $\rho_v$ are absolutely continuous functions,
\begin{align}
\label{psi_cond} \frac{t\rho'_g(t)}{\rho_g(t)}
\underset{t\to+\infty}{\to}0, \;\; \frac{t\rho'_v(t)}{\rho_v(t)}
\underset{t\to+\infty}{\to}0.
\end{align}

In addition, we suppose that
\begin{align}
\label{muck} \beta_v<\frac{d-\theta}{q} \quad \text{or} \quad
\beta_v=\frac{d-\theta}{q}, \quad \alpha_v>\frac{1-\gamma}{q}.
\end{align}

Without loss of generality we may assume that
$\overline{\Omega}\subset \left(-\frac 12, \, \frac 12\right)^d$.

We set $\beta=\beta_g+\beta_v$, $\alpha=\alpha_g+\alpha_v$,
$\rho(y)=\rho_g(y)\rho_v(y)$, $\mathfrak{Z}=(r,\, d, \, p, \, q,
\, g, \, v, \, h, \, a, \, c_*)$, $\mathfrak{Z}_*=(\mathfrak{Z},
\, R)$, where $c_*$ is the constant from Definition \ref{h_set}
and \label{r_def}$R={\rm diam}\, \Omega$.

We use the following notations for order inequalities. Let $X$,
$Y$ be sets, and let $f_1$, $f_2:\ X\times Y\rightarrow
\mathbb{R}_+$. We write $f_1(x, \, y)\underset{y}{\lesssim} f_2(x,
\, y)$ (or $f_2(x, \, y)\underset{y}{\gtrsim} f_1(x, \, y)$) if
for any $y\in Y$ there exists $c(y)>0$ such that $f_1(x, \, y)\le
c(y)f_2(x, \, y)$ for any $x\in X$; $f_1(x, \,
y)\underset{y}{\asymp} f_2(x, \, y)$ if $f_1(x, \, y)
\underset{y}{\lesssim} f_2(x, \, y)$ and $f_2(x, \,
y)\underset{y}{\lesssim} f_1(x, \, y)$.

Denote by ${\cal P}_{r-1}(\R^d)$ the space of polynomials on
$\R^d$ of degree not exceeding $r-1$. For a measurable set
$E\subset \R^d$ we set
$${\cal P}_{r-1}(E)= \{f|_E:\, f\in {\cal P}_{r-1}(\R^d)\}.$$
Notice that $W^r_{p,g}(\Omega) \supset {\cal P}_{r-1}(\Omega)$.

In Theorems \ref{th1}, \ref{th2}, \ref{th3} the conditions on
weights are such that $W^r_{p,g}(\Omega) \subset L_{q,v}(\Omega)$
and there exist $M>0$ and a linear continuous operator
$P:L_{q,v}(\Omega) \rightarrow {\cal P}_{r-1}(\Omega)$ such that
for any function $f\in W^r_{p,g}(\Omega)$
\begin{align}
\label{fpflqv} \|f-Pf\|_{L_{q,v}(\Omega)} \le M
\left\|\frac{\nabla^r f}{g}\right\| _{L_p(\Omega)}
\end{align}
(see \cite{vas_vl_raspr, vas_vl_raspr2, vas_sib, vas_w_lim}).

\begin{Rem}
\label{peq0} Let $x_*$ be the point from Definition \ref{fca}, and
let $R' ={\rm dist}_{|\cdot|} (x_*, \, \partial \Omega)$. The
operator $P$ defined in \cite{vas_vl_raspr2} (see also
\cite{vas_sing}) has the following property: there exists
$s_0=s_0(\mathfrak{Z})\in (0, \, 1)$ such that, for any function
$f\in C^\infty(\Omega) \cap L_{q,v}(\Omega)$ satisfying the
condition $f|_{B_{s_0R'}(x_*)}=0$, the equality $Pf=0$ holds.
\end{Rem}

We set $\hat W^r_{p,g}(\Omega) =\{f-Pf:\; f\in
W^r_{p,g}(\Omega)\}$. Let $\hat {\cal W}^r_{p,g}(\Omega) ={\rm
span}\, \hat W^r_{p,g}(\Omega)$ be equipped with norm $\|f\|_{\hat
{\cal W}^r_{p,g}(\Omega)}:= \left\|\frac{\nabla^r f}{g}\right\|
_{L_p(\Omega)}$. Denote by $I:\hat {\cal W}^r_{p,g}(\Omega)
\rightarrow L_{q,v}(\Omega)$ the embedding operator. From
(\ref{fpflqv}) it follows that $I$ is continuous.

\begin{Trm} \label{th1}
Let (\ref{def_h}), (\ref{yty}), (\ref{ghi_g0}), (\ref{psi_cond}),
(\ref{muck}) hold and $0<\theta<d$.
\begin{enumerate}
\item Suppose that $\beta-\delta<-\theta\left(\frac 1q-\frac
1p\right)_+$. We set $\alpha_0=\alpha$ for
$\beta_v<\frac{d-\theta}{q}$ and $\alpha_0=\alpha-\frac 1q$ for
$\beta_v=\frac{d-\theta}{q}$. We also suppose that
$\frac{\delta}{d}\ne \frac{\delta-\beta}{\theta}$. Denote
$\sigma_*(n)=1$ for $\frac{\delta}{d}<
\frac{\delta-\beta}{\theta}$ and
$$\sigma_*(n)=(\log
n)^{-\alpha_0+\frac{(\beta-\delta)\gamma}{\theta}} \rho(\log n)
\tau^{\frac{\beta-\delta}{\theta}}(\log n)$$ for
$\frac{\delta}{d}> \frac{\delta-\beta}{\theta}$. Then
$$
e_n(I:\hat {\cal W}^r_{p,g}(\Omega) \rightarrow L_{q,v}(\Omega))
\underset{\mathfrak{Z}_*}{\asymp} n^{-\min
\left\{\frac{\delta}{d}, \, \frac{\delta-\beta}{\theta}\right\}
+\frac 1q-\frac 1p} \sigma_*(n).
$$
\item Suppose that $\beta-\delta=-\theta\left(\frac 1q-\frac
1p\right)_+$.
\begin{enumerate}
\item Let $p\ge q$ and $\alpha_0:=\alpha-(1-\gamma)\left(\frac 1q-\frac
1p\right)>0$ for $\beta_v<\frac{d-\theta}{q}$,
$\alpha_0:=\alpha-1-(1-\gamma)\left(\frac 1q-\frac 1p\right)>0$
for $\beta_v=\frac{d-\theta}{q}$. Then
$$
e_n(I:\hat {\cal W}^r_{p,g}(\Omega) \rightarrow L_{q,v}(\Omega))
\underset{\mathfrak{Z}_*}{\asymp} (\log n)^{-\alpha_0} \rho(\log
n) \tau^{-\frac 1q+\frac 1p}(\log n).
$$
\item Let $p<q$ and $\alpha_0:=\alpha>0$ for
$\beta_v<\frac{d-\theta}{q}$, $\alpha_0:=\alpha-\frac 1q>0$ for
$\beta_v=\frac{d-\theta}{q}$. Suppose that $\alpha_0\ne \frac
1p-\frac 1q$. Then
$$
e_n(I:\hat {\cal W}^r_{p,g}(\Omega) \rightarrow L_{q,v}(\Omega))
\underset{\mathfrak{Z}_*}{\asymp} n^{\frac 1q-\frac 1p} (\log
n)^{-\alpha_0-\frac 1q+\frac 1p} \rho(\log n)
$$
for $\alpha_0>\frac 1p-\frac 1q$,
$$
e_n(I:\hat {\cal W}^r_{p,g}(\Omega) \rightarrow L_{q,v}(\Omega))
\underset{\mathfrak{Z}_*}{\asymp} n^{-\alpha_0} \rho(n)
$$
for $\alpha_0<\frac 1p-\frac 1q$.
\end{enumerate}
\end{enumerate}
\end{Trm}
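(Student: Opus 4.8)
The plan is to reduce the estimate for $e_n(I)$ to a combination of (a) a discretization of the weighted Sobolev space on $\Omega\in{\bf FC}(a)$ by the tree-like decomposition machinery already developed for $n$-widths in \cite{vas_width_raspr}, and (b) the general upper and lower bounds for entropy numbers of embedding operators of function spaces with tree-like structure that §3 of this paper establishes. First I would cover $\Omega$ by a dyadic family of balls adapted to the distance to $\Gamma$: for each dyadic scale one groups the points at distance $\asymp 2^{-m}$ from $\Gamma$, using the $h$-set condition (\ref{c1htmu}) together with the parametrization (\ref{def_h})--(\ref{yty}) to count that the number of such balls at scale $m$ is $\asymp h(2^{-m})/2^{-md}\asymp 2^{m(d-\theta)}m^{-\gamma}\tau(m)^{-1}$. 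On each ball the local Sobolev norm controls the local $L_{q,v}$ norm with a constant of the form $2^{-m\delta}\varphi_g(2^{-m})^{-1}\varphi_v(2^{-m})\cdot(\text{volume factors})$; plugging in (\ref{ghi_g0}) this local constant is $\asymp 2^{-m(\delta-\beta)} m^{\alpha}\rho(m)^{-1}$ up to the boundary-case correction governed by (\ref{muck}). Thus the embedding is, up to constants depending on $\mathfrak Z_*$, equivalent to a direct sum over $m$ of $2^{m(d-\theta)}m^{-\gamma}\tau(m)^{-1}$ copies of the finite-dimensional embedding $\ell_p^{k}\hookrightarrow\ell_q^{k}$ (with $k$ comparable to the local dimension of the Sobolev unit ball), each rescaled by the weight $2^{-m(\delta-\beta)}m^{\alpha}\rho(m)^{-1}$, interlaced with a Hardy-type summation operator along the tree that encodes the polynomial part removed by $P$ (Remark \ref{peq0} guarantees this part is finite-dimensional and harmless).

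Next I would invoke the Schütt--Edmunds--Netrusov estimates for $e_k(\ell_p^m\hookrightarrow\ell_q^m)$ quoted in the introduction, in the vector-valued form, to evaluate the entropy numbers of this weighted direct sum. The two-sided bound for such a "block-diagonal with geometrically varying multipliers" operator is classical: one optimizes the allocation of the $\log n$ "bits" among the scales $m$, balancing the cost $2^{m(d-\theta)}m^{-\gamma}\tau(m)^{-1}$ of resolving scale $m$ against its weight $2^{-m(\delta-\beta)}m^{\alpha}\rho(m)^{-1}$. The competition between the two exponents $\frac{\delta}{d}$ (coming from the regime where the local dimension dominates) and $\frac{\delta-\beta}{\theta}$ (coming from the regime where the number of scales dominates) is exactly what produces the $\min$ in the power of $n$ in part 1; the condition $\beta-\delta<-\theta(1/q-1/p)_+$ is precisely the summability condition that makes the series converge and the operator compact, and the hypothesis $\frac{\delta}{d}\ne\frac{\delta-\beta}{\theta}$ rules out the degenerate tie where an extra logarithmic factor would appear. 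The logarithmic factor $\sigma_*(n)$ in the case $\frac{\delta}{d}>\frac{\delta-\beta}{\theta}$ is the slowly varying residue $m^{-\alpha_0+(\beta-\delta)\gamma/\theta}\rho(m)\tau(m)^{(\beta-\delta)/\theta}$ evaluated at the optimal scale $m\asymp\log n$, and when $\frac{\delta}{d}<\frac{\delta-\beta}{\theta}$ the optimum sits at the coarsest scales so no slowly varying factor survives, giving $\sigma_*(n)=1$.

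For part 2, the critical case $\beta-\delta=-\theta(1/q-1/p)_+$, the power-law balance degenerates and the decisive contribution comes entirely from the slowly varying (logarithmic) profile; here I would use the finer versions of the sequence-space entropy estimates that keep track of logarithmic weights (as in the Haroske--Triebel--Kühn--Leopold--Sickel--Skrzypczak line of work cited above, and in §5 on two-weighted summation operators on a tree). When $p\ge q$ the embedding $\ell_p^k\hookrightarrow\ell_q^k$ contributes a genuine $k^{1/q-1/p}$ factor at each scale, which, after summing the logarithmically weighted geometric series and re-optimizing, yields $(\log n)^{-\alpha_0}\rho(\log n)\tau^{-1/q+1/p}(\log n)$ with the stated $\alpha_0$ (the $-1$ and $-(1-\gamma)(1/q-1/p)$ corrections being the arithmetic of combining the $\tau,\rho,\gamma$ exponents across scales and the $h$-set correction). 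When $p<q$ the block embedding is "trivial" ($e_k(\ell_p^k\hookrightarrow\ell_q^k)\asymp 1$ for $k\gtrsim$ rank), so the answer is governed by a pure two-weighted summation operator on the tree, and the dichotomy $\alpha_0\gtrless 1/p-1/q$ is the usual threshold in tree-summation estimates between the "$n^{1/q-1/p}$-type" behavior (few scales, each expensive) and the "$n^{-\alpha_0}$-type" behavior (many scales). The main obstacle I anticipate is bookkeeping: carrying the two slowly varying functions $\tau$ and $\rho$ and the logarithmic exponents $\gamma,\alpha$ through the optimization without losing the exact exponents in $\sigma_*$ and $\alpha_0$, and handling the boundary case $\beta_v=(d-\theta)/q$ of (\ref{muck}) — where the weight $v$ is only just integrable against $h$ and an extra $\log$ is shed — uniformly with the generic case; the structural reduction and the sequence-space inputs themselves are by now standard.
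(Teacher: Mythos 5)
Your high-level strategy matches the paper's: decompose $\Omega$ by a tree of Whitney-type cells adapted to $\Gamma$ (as in \cite{vas_width_raspr}), reduce to block-diagonal/sequence-space estimates of Sch\"utt and K\"uhn type, and balance the Sobolev rate $\delta/d$ against the $h$-set count. But two concrete issues would prevent this sketch from being turned into a proof.

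First, the cell-count formula is wrong, and it is the lynchpin of the whole balance. The number of Whitney cells of diameter $\asymp 2^{-m}$ at distance $\asymp 2^{-m}$ from $\Gamma$ is $\asymp 1/h(2^{-m}) \asymp 2^{m\theta}\,m^{-\gamma}\tau^{-1}(m)$, coming directly from covering $\Gamma$ by $\asymp 1/h(t)$ balls of radius $t$ (which is exactly property~5 / (\ref{ctij}) in the paper). Your formula $h(2^{-m})/2^{-md}\asymp 2^{m(d-\theta)}$ has the wrong power of $2^m$ (and then an extra sign flip on $\gamma,\tau$). If one optimizes with $2^{m(d-\theta)}$ cells per level one obtains the exponent $(\delta-\beta)/(d-\theta)$, not the $(\delta-\beta)/\theta$ appearing in the theorem; you write down the correct final exponent, but it does not follow from the count you computed. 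The sanity check $\theta=0$ (a point singularity) shows the discrepancy starkly: there should be $O(1)$ cells at each scale, not $2^{md}$.

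Second, the proposal gives essentially no lower-bound argument. Saying that the two-sided bound for a "block-diagonal with geometrically varying multipliers" operator is classical does not suffice: the block-diagonal model is an upper-bound surrogate, and to transfer its lower bound back to $I$ one must exhibit a subspace of $\hat{\cal W}^r_{p,g}(\Omega)$ on which the embedding acts like the model. The paper does this by constructing, at each $t$, a family of $\approx 2^{\theta k_{**}t}(k_{**}t)^{-\gamma}\tau^{-1}(k_{**}t)$ functions $\psi_{t,j}$ with pairwise disjoint supports near $\Gamma$ at scale $2^{-k_{**}t}$, normalized in the Sobolev seminorm, with controlled $L_{q,v}$ norms (formulas (\ref{psitj1})--(\ref{psitj3})), and checking via Remark~\ref{peq0} that they lie in $\hat W^r_{p,g}(\Omega)$; it then applies Lemma~\ref{low} (reducing to Sch\"utt's estimate for $l_p^m\to l_q^m$) or, for $p>q$ in the critical case, Corollary~\ref{low_cor} (reducing to K\"uhn's diagonal-operator estimate). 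Without such a test family, the lower bounds in all cases --- including the critical case 2 and the boundary case $\beta_v=(d-\theta)/q$, where the extra $\log$ and the threshold $\alpha_0\gtrless 1/p-1/q$ must come out exactly --- are not established. The upper-bound side also needs the actual verification of Assumptions~\ref{sup1}--\ref{sup3} (with the specific $\lambda_*,\mu_*,\gamma_*,\psi_*,u_*$ and the distinction between (\ref{nu_t_k}) and (\ref{nu_t_k1}) for $p<q$ in case~2b), which the sketch leaves entirely implicit.
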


Now we consider the case $\theta=0$.
\begin{Trm} \label{th2}
Let (\ref{def_h}), (\ref{yty}), (\ref{ghi_g0}), (\ref{psi_cond})
hold and $\theta=0$, $\beta-\delta<0$, $\beta_v<\frac{d}{q}$. Then
$$
e_n(I:\hat {\cal W}^r_{p,g}(\Omega) \rightarrow L_{q,v}(\Omega))
\underset{\mathfrak{Z}_*}{\asymp} n^{-\frac rd}.
$$
\end{Trm}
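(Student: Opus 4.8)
Since $\delta=r+\frac dq-\frac dp>0$ forces $\frac rd>\left(\frac 1p-\frac 1q\right)_+$, the quantity $n^{-r/d}$ on the right is exactly the order of the entropy numbers of the unweighted embedding $W^r_p(\Omega)\hookrightarrow L_q(\Omega)$; the assertion is that in this range of parameters the weights do not affect the order, so it is natural to prove a two-sided estimate with all the work in the upper bound. For the lower bound I would localize away from $\Gamma$: since $\Gamma\subset\partial\Omega$ and $\Omega$ is a domain, one can fix a closed ball $B_\eta(x_0)\subset\Omega$ disjoint from $\Gamma$ and from the ball $B_{s_0R'}(x_*)$ of Remark \ref{peq0}. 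On $B_\eta(x_0)$ we have ${\rm dist}_{|\cdot|}(\cdot,\Gamma)\asymp 1$, hence $g\asymp 1\asymp v$ there. If $f\in C^\infty(\R^d)$ and $\supp f\subset B_\eta(x_0)$, then $Pf=0$ by Remark \ref{peq0}, so $f=f-Pf\in\hat{\cal W}^r_{p,g}(\Omega)$ with $\|f\|_{\hat{\cal W}^r_{p,g}(\Omega)}\asymp\|\nabla^r f\|_{L_p(B_\eta(x_0))}$ and $\|If\|_{L_{q,v}(\Omega)}\asymp\|f\|_{L_q(B_\eta(x_0))}$. Thus $e_n(I)$ dominates, up to a constant, the entropy numbers of the classical embedding $W^r_p(B_\eta(x_0))\hookrightarrow L_q(B_\eta(x_0))$ restricted to functions vanishing near the boundary, which are $\asymp n^{-r/d}$ by the results of \S 2; hence $e_n(I)\gtrsim n^{-r/d}$.

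For the upper bound I would fix a small $\varepsilon_0>0$, write $\Omega=G\cup U$ with $G=\{x\in\Omega:\;{\rm dist}_{|\cdot|}(x,\Gamma)>\varepsilon_0\}$ and $U=\Omega\setminus G$, and then reduce, following the scheme of \S 3 (and of \cite{vas_width_raspr} for widths), to estimating the entropy numbers of the direct sum of the local embedding operators over a suitable partition of $\Omega$. On $G$ the weights are bounded above and below, and since $G$ is covered by finitely many John subdomains with parameters controlled by $\mathfrak{Z}_*$, the contribution of $G$ is majorized by the unweighted embedding and is $\lesssim n^{-r/d}$ by the Sch\"utt--Edmunds--Triebel estimates of \S 2. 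On the collar $U$ I would use the dyadic-in-distance decomposition $U=\bigcup_{k\ge k_0}\bigcup_{j=1}^{N_k}\Omega_{k,j}$, where each $\Omega_{k,j}$ is a John domain with parameter $\asymp a$, with ${\rm diam}\,\Omega_{k,j}\asymp 2^{-k}$ and ${\rm dist}_{|\cdot|}(\Omega_{k,j},\Gamma)\asymp 2^{-k}$. The structural point, special to $\theta=0$, is that $N_k\lesssim 1/h(2^{-k})\asymp k^{-\gamma}\tau(k)^{-1}$ grows at most polynomially in $k$. On $\Omega_{k,j}$ the weights are $\asymp\varphi_g(2^{-k})$ and $\asymp\varphi_v(2^{-k})$, so the rescaled inequality (\ref{fpflqv}) gives for the local operator a norm $\lesssim w_k:=\varphi_g(2^{-k})\varphi_v(2^{-k})2^{-k\delta}\asymp 2^{-k(\delta-\beta)}k^{-\alpha}\rho(k)$ and entropy numbers $\lesssim w_k\,m^{-r/d}$.

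It then remains to estimate the entropy numbers of $I|_U=\bigoplus_{k,j}I_{k,j}$, acting between the corresponding $\ell_p$- and $\ell_q$-sums of the local spaces; this is precisely the setting of the abstract upper bound of \S 3 for embeddings of function spaces with tree-like structure (the tree has $N_k$ vertices on level $k$, with weights governed by $w_k$, and the estimate rests on the two-weighted summation-operator bounds of \S 5). The hypothesis $\beta_v<\frac dq$ (the case $\theta=0$ of (\ref{muck})) guarantees $v\in L_q(U)$ and, more generally, that the sums over the collar are finite, while $\beta-\delta<0$ makes $w_k$ decay geometrically in $k$; since $N_k$ grows only polynomially, the series $\sum_k w_k^{d/(d+r)}N_k$ converges. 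Allocating to level $k$ a number of entropy numbers $n_k\propto w_k^{d/(d+r)}N_k$ (with $\sum_k n_k\le n$) then yields $e_n(I|_U)\lesssim n^{-r/d}$: the top finitely many levels, being scaled copies of the classical embedding, already give the rate $n^{-r/d}$, and the geometric tail adds only a constant factor. Combining, $e_n(I)\le e_{\lceil n/2\rceil}(I|_G)+e_{\lceil n/2\rceil}(I|_U)\lesssim n^{-r/d}$, which with the lower bound gives $e_n(I)\underset{\mathfrak{Z}_*}{\asymp}n^{-r/d}$.

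The hard part will be the bookkeeping behind the \S 3 reduction: making the localization rigorous for the space $\hat{\cal W}^r_{p,g}(\Omega)$, whose norm involves global $r$-th order derivatives, so that one must control the operator $P$ and the lower-order cross terms of the partition (this is where the integral representation of Reshetnyak and the Hardy-type consequences of (\ref{muck}) enter); and then verifying that, with $N_k$ of polynomial growth and $w_k$ geometric, the optimal distribution of entropy numbers over the infinitely many levels of the collar produces exactly the rate $n^{-r/d}$ with no residual logarithmic factor. This is in sharp contrast with Theorem \ref{th1}, where for $\theta>0$ one has $N_k\asymp 2^{k\theta}$, the competition between $N_k$ and $w_k$ is genuine, and the extra factors $\sigma_*(n)$ appear; once the abstract estimate of \S 3 is in hand, the present case reduces to summing explicit geometric-times-polylogarithmic series.
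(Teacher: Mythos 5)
Your proposal follows essentially the same route as the paper: for the lower bound you localize away from $\Gamma$ (using Remark \ref{peq0} to kill $P$) and reduce to the known rate $n^{-\delta/d+1/q-1/p}=n^{-r/d}$ for the unweighted embedding on a cube, which is exactly what the paper does by reducing to $\hat{\cal W}^r_p([0,1]^d)\hookrightarrow L_q([0,1]^d)$; and for the upper bound you invoke the Whitney-type dyadic-in-distance partition and the abstract tree framework of \S 3, observing correctly that the structural feature of $\theta=0$ is that the number of cells per dyadic level is only polylogarithmic in the scale, so that with $\beta-\delta<0$ the block weights $w_k$ decay geometrically while the dimensions $N_k$ grow subexponentially, and the interior rate $n^{-r/d}$ dominates (in the paper's notation, $\delta_*<\lambda_*\beta_*$, so $\min(\delta_*,\lambda_*\beta_*)=\delta_*=\delta/d$ and $\sigma_*\equiv 1$ in Theorem \ref{trm1}).

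Two technical cautions, since you flag the "bookkeeping" as the hard part. First, the allocation $n_k\propto w_k^{d/(d+r)}N_k$ is the Lagrange-multiplier rule for Kolmogorov widths, where the cost of a block is $\asymp w_k(n_k/N_k)^{-r/d}$; it is not what one uses for entropy numbers. For entropy numbers, the finite-dimensional estimate (Theorem \ref{shutt_trm}) decays \emph{exponentially}, $e_k(I_\nu:l_p^\nu\to l_q^\nu)\asymp 2^{-k/\nu}\nu^{1/q-1/p}$ once $k\gtrsim\nu$, and the paper's allocation $k_t=\lceil n\cdot 2^{-\varepsilon|t-t_*(n)|}\rceil$ (Lemmas \ref{sum_qt_est}, \ref{ptm_sum}) is deliberately chosen so that $k_t/\overline s_t$ grows geometrically away from the critical level $t_*(n)$; the sum then converges because of the $2^{-k_t/\overline s_t}$ factor, not because of any width-like balance. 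Your allocation would not directly yield the $n^{-r/d}$ bound without this exponential-regime reinterpretation. Second, two attributions are off: the Hardy-type bounds $C(j_0)$ and (\ref{fpeta}) are imported from the earlier papers cited around (\ref{fpeta})--(\ref{cj02thalp}), not from \S 5 (which is an application of Theorems \ref{trm1}--\ref{trm2}, not an ingredient in their proof); and the classical entropy numbers of $W^r_p\hookrightarrow L_q$ on a cube are quoted as a known result, not derivable from the sequence-space estimates of \S 2 alone.
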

In the case $\theta=0$, $\beta-\delta=0$ we suppose that
$\rho_g(t)=|\log t|^{-\lambda_g}$, $\rho_v=|\log t|^{-\lambda_v}$,
$\tau(t)=|\log t|^{\nu}$ (in the general case the estimates in
assertion 1 of Theorem \ref{th3} can be obtained similarly).
Denote $\lambda =\lambda_g+\lambda_v$.

\begin{Trm} \label{th3}
Suppose that (\ref{def_h}), (\ref{yty}), (\ref{ghi_g0}),
(\ref{psi_cond}) hold and $\theta=0$, $\beta-\delta=0$,
$\beta_v<\frac{d}{q}$.
\begin{enumerate}
\item Let $\alpha-(1-\gamma)\left(\frac 1q-\frac 1p\right)_+>0$.
Suppose that $\frac{\alpha}{1-\gamma}\ne \frac{\delta}{d}$. We set
$\sigma_*(n)=1$ for $\frac{\delta}{d}<\frac{\alpha}{1-\gamma}$ and
$\sigma_*(n)= (\log n)^{-\lambda-\frac{\alpha\nu}{1-\gamma}}$ for
$\frac{\delta}{d}>\frac{\alpha}{1-\gamma}$. Then
$$
e_n(I:\hat {\cal W}^r_{p,g}(\Omega) \rightarrow L_{q,v}(\Omega))
\underset{\mathfrak{Z}_*}{\asymp} n^{-\min
\left\{\frac{\delta}{d}, \, \frac{\alpha}{1-\gamma}\right\}+\frac
1q-\frac 1p} \sigma_*(n).
$$
\item Suppose that $\alpha-(1-\gamma)\left(\frac 1q-\frac 1p\right)_+=0$,
$\lambda>(1-\nu)\left(\frac 1q-\frac 1p\right)_+$.
\begin{enumerate}
\item Let $p\ge q$. Then
$$
e_n(I:\hat {\cal W}^r_{p,g}(\Omega) \rightarrow L_{q,v}(\Omega))
\underset{\mathfrak{Z}_*}{\asymp} (\log
n)^{-\lambda+(1-\nu)\left(\frac 1q-\frac 1p\right)}.
$$
\item Let $p<q$, $\lambda\ne \frac 1p-\frac 1q$. Then
$$
e_n(I:\hat {\cal W}^r_{p,g}(\Omega) \rightarrow L_{q,v}(\Omega))
\underset{\mathfrak{Z}_*}{\asymp} n^{\frac 1q-\frac 1p} (\log
n)^{-\lambda+\frac 1p-\frac 1q}
$$
for $\lambda>\frac 1p-\frac 1p$,
$$
e_n(I:\hat {\cal W}^r_{p,g}(\Omega) \rightarrow L_{q,v}(\Omega))
\underset{\mathfrak{Z}_*}{\asymp} n^{-\lambda}
$$
for $\lambda<\frac 1p-\frac 1q$.
\end{enumerate}
\end{enumerate}
\end{Trm}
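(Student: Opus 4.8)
My plan is to follow the scheme already used in this paper and in \cite{vas_width_raspr} (for $n$-widths) and in the proofs of Theorems \ref{th1}, \ref{th2}: first reduce the weighted Sobolev embedding $I$ to a model problem on a set with tree-like structure, then invoke the upper estimates of \S 3 together with the estimates for two-weighted summation operators on a tree of \S 5, and finally produce matching lower bounds by localizing to dyadic neighbourhoods of $\Gamma$. Concretely, for $k\in\N$ I would cover the layer $\{x\in\Omega:\ 2^{-k-1}\le\dist_{|\cdot|}(x,\Gamma)<2^{-k}\}$ by $\asymp h(2^{-k})^{-1}$ subdomains, each of which, after dilation by $2^k$, is bi-Lipschitz equivalent to a fixed John domain, and on each of which $g$ and $v$ are constant up to constants depending on $\mathfrak Z_*$. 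Using Reshetnyak's integral representation, the Poincar\'e-type inequality (\ref{fpflqv}) and the vanishing property of $P$ from Remark \ref{peq0}, the operator $I$ becomes equivalent, up to constants depending on $\mathfrak Z_*$, to a direct sum over this countable family of rescaled copies of the embedding $W^r_p(\mathrm{cube})\to L_q(\mathrm{cube})$; the scaling factor on a level-$k$ piece is read off from $\varphi_g(2^{-k})$, $\varphi_v(2^{-k})$, the side $2^{-k}$ and the multiplicity $h(2^{-k})^{-1}$, and the pieces are organized as a tree rooted at $x_*$. This is precisely the situation treated abstractly in \S 3.

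The point of the present regime ($\theta=0$, $\beta-\delta=0$) is that in the level-$k$ scaling factor both the power of $2^{-k}$ coming from $h$ and the power $2^{-k(\delta-\beta)}$ disappear, so this factor decays only at the rate of a power of $k$ times a power of $\log k$: with $t=2^{-k}$ one has $|\log t|=k$, $\tau(|\log t|)=(\log k)^{\nu}$, $\rho_g(|\log t|)=(\log k)^{-\lambda_g}$, $\rho_v(|\log t|)=(\log k)^{-\lambda_v}$, so after collecting the $|\log t|$-powers the effective weight on level $k$ behaves like $k^{-\alpha}(\log k)^{-\lambda}$ times a power of $k$ of exponent $(1-\gamma)(\tfrac1q-\tfrac1p)_+$ (or a shift of it) coming from $h^{1/q}$, $h^{1/p}$ and the multiplicity, while the local dimension $m_k$ grows like $h(2^{-k})^{-1}\asymp k^{-\gamma}(\log k)^{-\nu}$. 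Feeding these data into the tree estimates of \S 3 and \S 5, together with Sch\"utt's estimates \cite{c_schutt} and the Edmunds--Netrusov estimates \cite{edm_netr1, edm_netr2} for the finite-dimensional mixed-norm blocks, yields the upper bounds: when $\alpha-(1-\gamma)(\tfrac1q-\tfrac1p)_+>0$ the relevant series over $k$ converges, the minimum in the exponent is $\min\{\tfrac\delta d,\tfrac\alpha{1-\gamma}\}$ (one term from the smoothness $\delta$ of $W^r_p$, one from the ``logarithmic dimension'' of $\Gamma$), and $\sigma_*(n)$ collects the surviving powers of $\log n$; when $\alpha-(1-\gamma)(\tfrac1q-\tfrac1p)_+=0$ the series is critical and one obtains a purely logarithmic rate for $p\ge q$ and the stated two-regime answer for $p<q$.

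For the lower bounds I would fix, for a given $n$, a suitable level $k=k(n)$ (chosen so that $m_k$ and $n$ are comparable, or, in the critical case, take a finite direct sum over consecutive levels), and use bump functions supported in disjoint level-$k$ pieces; these span a subspace isometric, up to constants depending on $\mathfrak Z_*$, to $\ell_p^{m_k}$ inside the unit ball of $\hat{\mathcal W}^r_{p,g}(\Omega)$, and $I$ maps it onto a scaled ball of $\ell_q^{m_k}$. Sch\"utt's lower estimate for $e_n(\ell_p^{m}\to\ell_q^{m})$ then gives the lower bound after optimizing over $k$; the exclusions $\tfrac\alpha{1-\gamma}\ne\tfrac\delta d$ and $\lambda\ne\tfrac1p-\tfrac1q$ are exactly what guarantees that one scale (respectively one of the two competing terms) strictly dominates, so that no spurious factor of $\log n$ is produced by a borderline geometric sum. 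The conditions $\lambda>(1-\nu)(\tfrac1q-\tfrac1p)_+$ and $\beta_v<d/q$ are used, as in Theorems \ref{th1}, \ref{th2}, to ensure that the weighted $L_q$-integral converges near $\Gamma$ and that the tree sums defining the upper bound are finite.

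The main obstacle I expect is the sharp two-sided matching in the critical case $\alpha-(1-\gamma)(\tfrac1q-\tfrac1p)_+=0$: there infinitely many levels $k$ contribute comparably to the entropy, so the upper bound cannot come from optimizing a single scale but requires the subadditivity/summation estimates for two-weighted operators on a tree from \S 5, and getting the exponent of $\log n$ exactly right, including the $(1-\nu)(\tfrac1q-\tfrac1p)$ correction for $p\ge q$ and the $\nu$-dependence inside $\sigma_*$ in assertion 1, demands careful bookkeeping of the doubly-logarithmic factors $\tau,\rho_g,\rho_v$ all the way through the localization. The case $p<q$ is the other delicate point, since then the entropy numbers of the finite-dimensional blocks behave differently for $n$ below and above the block dimension, which is what produces the two regimes $\lambda\gtrless\tfrac1p-\tfrac1q$.
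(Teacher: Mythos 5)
Your plan coincides with the paper's: decompose $\Omega$ into the dyadic pieces $\Omega[\eta_{j,i}]$ forming the tree ${\cal A}$, verify Assumptions \ref{sup1}--\ref{sup3} of \S 3 with the parameters dictated by (\ref{ueta_ji}), (\ref{fpeta}) and (\ref{cj0l})--(\ref{cj0ll}), apply Theorems \ref{trm1} and \ref{trm2} for the upper bound, and obtain the lower bound from bump functions concentrated on one dyadic level (or, in the critical case $p>q$, on a tail of levels) via Lemma \ref{low} and Corollary \ref{low_cor} with Sch\"utt's Theorem \ref{shutt_trm} and K\"uhn's Theorem \ref{kuhn_trm}. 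One small correction: the paper does not use the summation-operator results of \S 5 as an input to Theorem \ref{th3} --- those are a parallel, purely discrete application of the same \S 3 machinery via Remark \ref{diskr_case}, and nothing from \S 5 enters the proof of the function-space theorems.
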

If $\Gamma$ is a singleton, then estimates of entropy numbers are
given by formulas from Theorem \ref{th3} with $\gamma=0$,
$\tau\equiv 1$. The proof is the same as for Theorem \ref{th3}.
These estimates are the generalization of the result of Triebel
\cite{tr_jat} (in \cite{tr_jat} the case $p=q$ was considered).

Without loss of generality we may assume that $|(x_1, \, \dots, \,
x_d)|=\max _{1\le i\le d}|x_i|$. Further we shall denote ${\rm
dist}:={\rm dist}_{|\cdot|}$, ${\rm diam}:={\rm diam}_{|\cdot|}$.

\section{Preliminaries}

The following properties of entropy numbers are well-known (see,
e.g., \cite{edm_trieb_book}):
\begin{enumerate}
\item if $T:X\rightarrow Y$, $S:Y\rightarrow Z$ are linear
continuous operators, then $e_{k+l-1}(ST)\le e_k(S)e_l(T)$;
\item if $T, \, S:X\rightarrow Y$ are linear continuous operators,
then
\begin{align}
\label{eklspt} e_{k+l-1}(S+T)\le e_k(S)+e_l(T).
\end{align}
\end{enumerate}
From property 1 it follows that
\begin{align}
\label{mult_n} e_k(ST)\le \|S\|e_k(T), \quad e_k(ST)\le
\|T\|e_k(S).
\end{align}

Further we denote by $I_\nu$ the identity operator on $\R^\nu$.

\begin{trma}
\label{shutt_trm} {\rm (see \cite{c_schutt, edm_trieb_book}).} Let
$1\le p\le q\le \infty$. Then
$$
e_k(I_\nu:l_p^{\nu}\rightarrow l_q^{\nu}) \underset{p,q}{\asymp}
\left\{ \begin{array}{l} 1, \quad 1\le k\le
\log \nu, \\
\left(\frac{\log\left(1+\frac{\nu}{k}\right)}{k}\right)^{\frac1p
-\frac 1q}, \quad \log \nu\le k\le \nu, \\
2^{-\frac{k}{\nu}}\nu^{\frac 1q-\frac 1p}, \quad \nu\le
k.\end{array}\right.
$$
Let $1\le q<p\le \infty$. Then
$$
e_k(I_\nu:l_p^{\nu}\rightarrow l_q^{\nu}) \underset{p,q}{\asymp}
2^{-\frac{k}{\nu}}\nu^{\frac 1q-\frac 1p}, \quad k\in \N.
$$
\end{trma}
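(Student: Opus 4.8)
The plan is to handle four regimes. For $k\ge\nu$ --- and, as will turn out, for the whole case $q<p$ --- a volume argument suffices; for $k\le\log\nu$ an elementary packing argument with the standard basis does the job; the real content is the middle range $\log\nu\le k\le\nu$ with $p\le q$, where the exact exponent $\bigl(\log(1+\nu/k)/k\bigr)^{1/p-1/q}$ has to be produced by an explicit discretization (upper bound) and by a well-separated combinatorial family of vectors (lower bound).

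I would first record the volumetric input: $(\mathrm{vol}\,B_p^\nu)^{1/\nu}\asymp\nu^{-1/p}$ uniformly in $p$ (Stirling), hence $(\mathrm{vol}\,B_p^\nu/\mathrm{vol}\,B_q^\nu)^{1/\nu}\asymp\nu^{1/q-1/p}$; and the standard two-sided estimate
$$c\,2^{-k/\nu}\bigl(\mathrm{vol}\,T(B_X)/\mathrm{vol}\,B_Y\bigr)^{1/\nu}\le e_k(T)\le 4\cdot2^{-(k-1)/\nu}\bigl(\mathrm{vol}\,T(B_X)/\mathrm{vol}\,B_Y\bigr)^{1/\nu}$$
for linear $T$ between $\nu$-dimensional normed spaces, proved by comparing a maximal $\varepsilon$-separated subset of $T(B_X)$ with volume. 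Applied to $T=I_\nu$, this gives $e_k(I_\nu:l_p^\nu\to l_q^\nu)\asymp 2^{-k/\nu}\nu^{1/q-1/p}$ for $k\ge\nu$ in both cases. When $q<p$, combining the upper part with $\|I_\nu:l_p^\nu\to l_q^\nu\|=\nu^{1/q-1/p}$ (so $e_1=\nu^{1/q-1/p}$) and the monotonicity of $e_k$ already yields $e_k\asymp 2^{-k/\nu}\nu^{1/q-1/p}$ for every $k$, settling that case. For $p\le q$ and $k\le\log\nu$: the upper bound is $e_k\le e_1=\|I_\nu:l_p^\nu\to l_q^\nu\|=1$, and $e_k\gtrsim1$ holds because the $\nu$ standard basis vectors lie in $B_p^\nu$ and are pairwise $2^{1/q}$-separated in $l_q^\nu$, so if $2^{k-1}<\nu$ two of them share one ball of radius $\varepsilon$, forcing $\varepsilon\ge2^{1/q-1}$.

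Now the middle range $\log\nu\le k\le\nu$, $p\le q$. Here I would take $m\asymp k/\log(1+\nu/k)$ (with an implicit constant depending on whether the upper or the lower bound is being proved) and $\varepsilon\asymp m^{-1/p}\asymp\bigl(\log(1+\nu/k)/k\bigr)^{1/p}$. \emph{Upper bound.} Given $x\in B_p^\nu$, discard the coordinates with $|x_i|<\varepsilon$ (at most $\varepsilon^{-p}\asymp m$ survive) and round each surviving coordinate to the nearest integer multiple of $\varepsilon$. By Hölder against $\sum|x_i|^p\le1$ the discarded part has $l_q^\nu$-norm $\le\varepsilon^{1-p/q}$, and the rounding moves $x$ by at most $m^{1/q}\varepsilon\asymp\varepsilon^{1-p/q}$ in $l_q^\nu$; so $x$ lies within $\asymp\varepsilon^{1-p/q}=\bigl(\log(1+\nu/k)/k\bigr)^{1/p-1/q}$ of a net point. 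A net point has support of size $\le m$ and, on that support, equals $\varepsilon$ times an integer vector $z$ with $\|z\|_p^p\lesssim\varepsilon^{-p}\asymp m$, and the number of such $z$ is $e^{O(m)}$ (their lattice sits inside a body of volume $\asymp(m^{1/p})^m\mathrm{vol}\,B_p^m=e^{O(m)}$); hence the logarithm of the net size is $\lesssim m\log(e\nu/m)\asymp m\log(\nu/m)\asymp k$, so for a small enough implicit constant in $m$ it is $\le k-1$. \emph{Lower bound.} Let $\mathcal N$ be a maximal family of $m$-element subsets of $\{1,\dots,\nu\}$ pairwise at Hamming distance $>m/2$; by Gilbert--Varshamov counting $\log|\mathcal N|\gtrsim m\log(\nu/m)$ provided $\nu/m$ exceeds an absolute constant, and $m$ can be chosen so that moreover $|\mathcal N|>2^{k-1}$ for all $\log\nu\le k\le c_3\nu$ with a suitable absolute $c_3>0$. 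The vectors $m^{-1/p}\mathbf{1}_S$, $S\in\mathcal N$, lie on the unit sphere of $l_p^\nu$ and are pairwise $\gtrsim m^{1/q-1/p}$-separated in $l_q^\nu$, so two of them share a ball of radius $\varepsilon$ only if $\varepsilon\gtrsim m^{1/q-1/p}$, giving $e_k(I_\nu)\gtrsim m^{1/q-1/p}=\bigl(\log(1+\nu/k)/k\bigr)^{1/p-1/q}$. For $c_3\nu\le k\le\nu$ the same lower bound $\gtrsim\nu^{1/q-1/p}$ comes from the volume estimate, since there $2^{-k/\nu}\asymp1$ and $\nu^{1/q-1/p}\asymp\bigl(\log(1+\nu/k)/k\bigr)^{1/p-1/q}$; the two subranges overlap and together span $[\log\nu,\nu]$.

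The step I expect to be the main obstacle is the constant bookkeeping in the middle range: one must tune the proportionality constants in $m$ and $\varepsilon$ so that the explicit net has at most $2^{k-1}$ points while the separated family has strictly more, uniformly over $\log\nu\le k\le\nu$. This is most sensitive near the endpoints --- at $k\asymp\log\nu$, where $\log(1+\nu/k)\asymp\log\nu$ and $m$ is of bounded order, and at $k\asymp\nu$, where $\log(1+\nu/k)\asymp1$ forces $m\asymp\nu$, so $\nu/m$ is bounded and the Gilbert--Varshamov estimate has to be handed over to the volume bound. Everything else --- the volumetric lemma, the case $q<p$, and the two extreme ranges --- is routine.
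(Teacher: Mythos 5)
The paper does not prove this theorem at all: it is quoted as Theorem~A with the citation ``(see \cite{c_schutt, edm_trieb_book})'', and the only in-text commentary is the remark that follows, noting the $2\nu$ vs.\ $\nu$ discrepancy with \cite{edm_trieb_book} coming from the choice of scalar field. So there is no in-paper argument to compare yours against; you have supplied a self-contained proof of a result the paper takes as known.

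Your sketch is, as far as I can tell, the standard proof of Sch\"utt's theorem and the logic is sound: the volume comparison settles $k\gtrsim\nu$ and, combined with $e_1=\|I_\nu\|=\nu^{1/q-1/p}$ and monotonicity, the whole case $q<p$; the pigeonhole on the standard basis vectors settles $k\lesssim\log\nu$; and in the middle range $\log\nu\le k\le\nu$ with $p\le q$ you threshold at $\varepsilon\asymp m^{-1/p}$ and round for the upper bound, and use a Gilbert--Varshamov family of $m$-subsets for the lower bound, with $m\asymp k/\log(1+\nu/k)$, which is exactly where the exponent $(\log(1+\nu/k)/k)^{1/p-1/q}$ comes from. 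One statement is imprecise as written: the ``standard two-sided estimate'' $c\,2^{-k/\nu}(\mathrm{vol}\,T(B_X)/\mathrm{vol}\,B_Y)^{1/\nu}\le e_k(T)\le 4\cdot2^{-(k-1)/\nu}(\mathrm{vol}\,T(B_X)/\mathrm{vol}\,B_Y)^{1/\nu}$ cannot hold for all $k$ (for $k=1$ the right-hand inequality would contradict $e_1=\|T\|$ whenever the volume ratio is small); the upper half holds only once $k\gtrsim\nu$. You only invoke it there, so nothing downstream breaks, but the hypothesis should be stated. The remaining constant-tuning issues at the endpoints of the middle range are ones you already flag explicitly, and your description of how to resolve them (shrink the constant in $m$ for the net, hand the lower bound over to the volume estimate near $k\asymp\nu$, rely on the overlap of the three ranges) is correct.
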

\begin{Rem}
In estimates from \cite{edm_trieb_book} the value $2\nu$ was taken
instead of $\nu$ since the spaces $l_p^\nu$, $l_q^\nu$ were
considered as spaces over $\C$.
\end{Rem}

In the paper of K\"{u}hn \cite{kuhn_08} the order estimates for
entropy numbers od diagonal operators $D_\sigma: l_p \rightarrow
l_q$ were obtained for $p>q$.

\begin{trma}
\label{kuhn_trm} {\rm (see \cite{kuhn_08}).} Let $0<q<p\le
\infty$, $\sigma =(\sigma_k)_{k\in \N} \in l_{\frac{pq}{p-q}}$,
$\omega_n =\left(\sum \limits _{k=n}^\infty
\sigma_k^{\frac{pq}{p-q}} \right)^{\frac 1q-\frac 1p}$. Suppose
that there exists $C>0$ such that $\omega_n \le C \omega_{2n}$ for
any $n$. We define the operator $D_\sigma:l_p \rightarrow l_q$ by
$D_\sigma(x_k)_{k\in \N} =(\sigma_kx_k)_{k\in \N}$. Then
$e_n(D_\sigma:l_p \rightarrow l_q) \underset{C,p,q}{\asymp}
\omega_n$.
\end{trma}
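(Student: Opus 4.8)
Write $r=\frac{pq}{p-q}$, so that $\frac1r=\frac1q-\frac1p$ and $\omega_n=\bigl(\sum_{k\ge n}\sigma_k^{r}\bigr)^{1/r}$ is the $l_r$-norm of the tail $(\sigma_k)_{k\ge n}$; by H\"older's inequality (with exponents $\frac rq$ and $\frac pq$) one has $\|D_\sigma:l_p\to l_q\|=\|\sigma\|_{l_r}=\omega_1$, so for $n=1$ the claim is just this norm identity. We may assume $\sigma_k\ge 0$ and, after permuting coordinates (an isometry of $l_p$ and of $l_q$), that $\sigma$ is non-increasing; the doubling hypothesis makes $\omega_n$ insensitive to such a rearrangement up to constants. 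Two elementary facts are used throughout: the monotonicity $e_k(D_\tau)\le e_k(D_\sigma)$ whenever $0\le\tau_k\le\sigma_k$ (because $D_\tau=D_{\tau/\sigma}D_\sigma$ with $\|D_{\tau/\sigma}:l_q\to l_q\|\le 1$, cf.\ (\ref{mult_n})), and $e_k\bigl(D_\sigma|_\Lambda:l_p^{|\Lambda|}\to l_q^{|\Lambda|}\bigr)\le\bigl(\max_\Lambda\sigma\bigr)\,e_k\bigl(I_{|\Lambda|}:l_p^{|\Lambda|}\to l_q^{|\Lambda|}\bigr)$ for finite $\Lambda$; the finite-dimensional input then all comes from Theorem~\ref{shutt_trm}, which for $q<p$ reads $e_k(I_\nu:l_p^\nu\to l_q^\nu)\asymp 2^{-k/\nu}\nu^{1/r}$. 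The plan is: peel off the coordinates $k\ge n$, controlled by the operator norm $\omega_n$; cut the remaining coordinates into blocks on which $\sigma$ varies by a factor at most $2$; estimate each block by Theorem~\ref{shutt_trm}; and reassemble the blocks, for the upper bound by subadditivity (\ref{eklspt}) with a suitably distributed index budget, for the lower bound by a separated-set construction combining many blocks. The doubling condition $\omega_n\le C\omega_{2n}$ enters only in this last step, where it forces the resulting sums to collapse to $\omega_n$ from both sides.

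\textbf{Upper bound.} Fix $n$. For $i\ge 0$ put $\Lambda_i=\{k:2^{-i-1}\omega_1<\sigma_k\le 2^{-i}\omega_1\}$, $N_i=\#\Lambda_i$, and let $D_i$ be $D_\sigma$ with the entries outside $\Lambda_i$ set to $0$, so $D_\sigma=\sum_{i\ge 0}D_i$, $\|D_i\|=b_i:=\bigl(\sum_{k\in\Lambda_i}\sigma_k^r\bigr)^{1/r}\asymp 2^{-i}\omega_1 N_i^{1/r}$ and $\sum_i b_i^r=\omega_1^r$. By the two facts above, $e_{k_i}(D_i)\lesssim b_i 2^{-k_i/N_i}$ for every $k_i\ge 1$, while $e_{k_i}(D_i)\le b_i$ always. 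Now: handle the part of $D_\sigma$ supported on $\{k\ge n\}$ with one covering ball (contribution $\omega_n$); to the finitely many remaining nonempty blocks assign integers $k_i\ge 0$ by the Lagrange-optimal rule $k_i\asymp N_i\log_2\!\bigl(b_i/(\mu N_i)\bigr)_+$ with $\mu\asymp\omega_n/n$. Using the doubling condition --- in the form $\omega_1/\omega_n\lesssim n^{O(1)}$ and the fact that, at scale $n$, the dyadic block masses $b_i^r$ are concentrated on a single group of comparable blocks --- one checks that $\sum_i k_i\lesssim n$ and that $\sum_i b_i 2^{-k_i/N_i}\lesssim\mu\sum_i N_i\lesssim\omega_n$. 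Iterating (\ref{eklspt}) over the tail and these blocks then gives $e_n(D_\sigma)\lesssim\omega_n$.

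\textbf{Lower bound.} If $N_i\ge n$, then restricting $D_\sigma$ to $\Lambda_i$ and using the ``fat ball'' inclusion $D_\sigma|_{\Lambda_i}\bigl(B_{l_p^{\Lambda_i}}\bigr)\supseteq (2^{-i-1}\omega_1) B_{l_p^{\Lambda_i}}$ together with the lower estimate in Theorem~\ref{shutt_trm} gives $e_n(D_\sigma)\ge e_n\bigl(D_\sigma|_{\Lambda_i}\bigr)\ge (2^{-i-1}\omega_1)\,e_n\bigl(I_{N_i}:l_p^{N_i}\to l_q^{N_i}\bigr)\asymp 2^{-i}\omega_1 N_i^{1/r}\asymp b_i$; hence $e_n(D_\sigma)\gtrsim\max\{b_i:N_i\ge n\}$, which already settles the case where $\sum_i b_i^r$ is comparable to its largest term. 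In general the mass $\omega_n^r=\sum_i b_i^r$ is spread (roughly equally) over a group of $M$ blocks, and one must combine them: choose blocks $\Lambda_{i_1},\dots,\Lambda_{i_M}$ with $b_{i_1}\asymp\dots\asymp b_{i_M}=:b$ and $Mb^r\gtrsim\omega_n^r$, and build inside $D_\sigma(B_{l_p})$ a set of at least $2^{cn}$ points that are pairwise $\gtrsim\omega_n$-separated in $l_q$ as follows. In $\Lambda_{i_m}$ take the points $\lambda\,2^{-i_m-1}\omega_1 N_{i_m}^{-1/p}\varepsilon$ with $\lambda\asymp M^{-1/p}$ and $\varepsilon$ running over a binary code in $\{-1,1\}^{N_{i_m}}$ of minimal distance $\gtrsim N_{i_m}$ and cardinality $\ge 2^{cN_{i_m}}$ (Gilbert--Varshamov); then keep only those concatenations that form an outer code over the $M$ blocks of minimal Hamming distance $\gtrsim M$, so any two retained points differ on at least $M/2$ blocks. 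Each concatenation has $l_p$-preimage of norm $(\sum_m\lambda^p)^{1/p}\le 1$; the $l_q$-distance of any two retained points is $\gtrsim\bigl(\tfrac M2(\lambda b)^q\bigr)^{1/q}\asymp M^{1/q-1/p}b=M^{1/r}b\asymp\omega_n$ (the factor $M^{1/q-1/p}$ absorbs the $M^{-1/p}$ coming from $\lambda$); and the cardinality is $\ge 2^{c\sum_m N_{i_m}}2^{-M}\ge 2^{c'n}$. A separated set of $2^{c'n}$ points forces $e_k(D_\sigma)\gtrsim\omega_n$ for $k\lesssim n$, and a standard rescaling (apply this with $n$ replaced by $\lceil n/c'\rceil$ and invoke doubling) yields $e_n(D_\sigma)\gtrsim\omega_n$.

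\textbf{Where the difficulty lies.} The per-block estimates are immediate from Theorem~\ref{shutt_trm}; the whole content is in the reassembly. A block-by-block use of (\ref{eklspt}) with a uniform share of the index budget overshoots the answer by a power of $\log n$, while the single-block (``$\max$'') lower bound undershoots it by the same kind of factor, precisely because $\sum_i b_i^r$ may be spread over $\asymp\log n$ blocks $\Lambda_i$. The allocation of the index budget --- and, on the lower side, the choice of the $l_p$-amplitudes and of the inner and outer codes --- must be tuned so that the factors $2^{-k_i/N_i}$, resp.\ $2^{-n/N_i}$, stay bounded away from $0$ on every block carrying non-negligible mass; proving that the optimal allocation is affordable and that the optimized sums are, in both directions, $\asymp\omega_n$ (and not $(\log n)^{\pm\mathrm{const}}\omega_n$) is exactly where the hypothesis $\omega_n\le C\omega_{2n}$ is indispensable: it guarantees that for each $n$ the mass $\omega_n^r$ sits on a single dyadic group of comparable blocks, so the $l_r$-sum of the block contributions $b_i$ is realized up to constants. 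A secondary, routine point is the reduction to non-increasing $\sigma$ and the check that replacing $\sigma$ by its block-constant minorant alters $\omega_n$ only by a constant.
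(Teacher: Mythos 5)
This theorem appears in \S 2 as a cited result from K\"{u}hn \cite{kuhn_08}; the paper supplies no proof of it, so there is no in-paper argument to compare your sketch against, and the only question is whether the sketch would, once filled in, actually work.

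The ingredients you use (dyadic decomposition by the level of $\sigma$, Sch\"{u}tt's finite-dimensional estimates, subadditivity with an index budget, and concatenated-code lower bounds) are the right ones, and you correctly identify the danger of a $\log$-loss in naive reassembly. However the structural claim on which your lower bound pivots is false. You assert that doubling forces ``the mass $\omega_n^r$ to sit on a single dyadic group of comparable blocks'' and accordingly select $M$ blocks with $b_{i_1}\asymp\cdots\asymp b_{i_M}=:b$ and $Mb^r\gtrsim\omega_n^r$. Put $r=\frac{pq}{p-q}$ and take $\sigma_k=\bigl(k\,\log k\,(\log\log k)^2\bigr)^{-1/r}$ for large $k$. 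Then $\sigma$ is non-increasing, $\sigma\in l_r$, $\omega_n^r\asymp(\log\log n)^{-1}$, and $\omega_n/\omega_{2n}\to 1$, so the doubling hypothesis holds. The block masses are $b_i^r\asymp i^{-1}(\log i)^{-2}$, a slowly varying quantity that never flattens out: the largest collection of blocks with $b_i\asymp b_{i_0}$ (where $i_0\asymp\log n$ indexes the first block beyond coordinate $n$) has $M\asymp i_0$, giving $Mb^r\asymp(\log i_0)^{-2}\asymp(\log\log n)^{-2}$, a full power of $\log\log n$ short of $\omega_n^r$. Run literally, your concatenated-code construction therefore yields only $e_n\gtrsim(\log\log n)^{-2/r}$, not the required $\omega_n\asymp(\log\log n)^{-1/r}$, which is exactly the type of slack the theorem rules out. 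Repairing this requires block-dependent amplitudes $\lambda_m\propto b_{i_m}^{-1}$ that equalize the $l_q$-contributions, together with a genuine control of $\sum_m\lambda_m^p\le 1$ across blocks of widely varying $b_{i_m}$; that is the substantive part of the argument and it is not present in the sketch. The upper bound has a milder version of the same issue: the assertion $\sum_i k_i\lesssim n$ for your Lagrange-type allocation is precisely where doubling must be invoked, and leaving it at ``one checks that'' leaves out the proof.
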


The following result was proved by Lifshits \cite{lifs_m}.
\begin{trma}
\label{lifs_sta} {\rm (see \cite{lifs_m}).} Let $X$, $Y$ be normed
spaces, and let $V\in L(X, \, Y)$, $\{V_\nu\}_{\nu\in {\cal
N}}\subset L(X, \, Y)$. Then for any $n\in \N$
$$
e_{n+[\log_2|{\cal N}|]+1}(V)\le \sup _{\nu\in {\cal N}}
e_n(V_\nu)+\sup _{x\in B_X} \inf _{\nu\in {\cal N}} \|Vx-V_\nu
x\|_Y.
$$
\end{trma}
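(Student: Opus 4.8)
The plan is the standard two-stage covering argument. First I would use the quantity $e_n(V_\nu)$ to cover each image $V_\nu(B_X)$ by at most $2^{n-1}$ balls of (almost) minimal radius; then I would invoke the uniform approximation hypothesis to pass from the operators $V_\nu$ to $V$, at the cost of enlarging the radius by the approximation error; and finally I would count the total number of balls produced and check that it fits the entropy index $n+[\log_2|{\cal N}|]+1$.

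In detail: fix $n\in\N$, and assume (as we may, the asserted inequality being trivial otherwise) that both terms on the right-hand side are finite. Write $\varepsilon=\sup_{\nu\in{\cal N}}e_n(V_\nu)$ and $\delta=\sup_{x\in B_X}\inf_{\nu\in{\cal N}}\|Vx-V_\nu x\|_Y$, and fix $\eta>0$. For each $\nu\in{\cal N}$, since $e_n(V_\nu)<\varepsilon+\eta$, the definition of entropy numbers yields points $y^\nu_1,\dots,y^\nu_{2^{n-1}}\in Y$ with $V_\nu(B_X)\subset\bigcup_{i=1}^{2^{n-1}}(y^\nu_i+(\varepsilon+\eta)B_Y)$. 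Now, given $x\in B_X$, I pick $\nu(x)\in{\cal N}$ with $\|Vx-V_{\nu(x)}x\|_Y<\delta+\eta$ (possible since $\delta+\eta$ strictly exceeds the inner infimum) and then $i(x)$ with $\|V_{\nu(x)}x-y^{\nu(x)}_{i(x)}\|_Y\le\varepsilon+\eta$; the triangle inequality gives $\|Vx-y^{\nu(x)}_{i(x)}\|_Y\le\varepsilon+\delta+2\eta$. Hence
$$
V(B_X)\subset\bigcup_{\nu\in{\cal N}}\bigcup_{i=1}^{2^{n-1}}\bigl(y^\nu_i+(\varepsilon+\delta+2\eta)B_Y\bigr).
$$

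The last step is purely arithmetical: the family on the right has at most $|{\cal N}|\cdot 2^{n-1}$ members, and since $|{\cal N}|\le 2^{[\log_2|{\cal N}|]+1}$ this is at most $2^{n+[\log_2|{\cal N}|]}=2^{m-1}$ with $m:=n+[\log_2|{\cal N}|]+1$ (pad with repeated centers if there are fewer). Thus $e_m(V)\le\varepsilon+\delta+2\eta$, and letting $\eta\downarrow 0$ gives the claim.

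I do not anticipate a genuine obstacle here; the proof is bookkeeping. The only two points needing a little care are the reduction of the two suprema to strict upper bounds $\varepsilon+\eta$ and $\delta+\eta$ (so that the coverings and the near-minimizing indices $\nu(x)$, $i(x)$ actually exist, the infima not being assumed attained), and the elementary counting estimate $|{\cal N}|\le 2^{[\log_2|{\cal N}|]+1}$, which is exactly what merges the factor $|{\cal N}|$ coming from the choice of $\nu$ with the factor $2^{n-1}$ coming from each $e_n(V_\nu)$ into the single shift $[\log_2|{\cal N}|]+1$ of the entropy index.
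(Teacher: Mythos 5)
Your covering argument is correct and complete: the only points that needed care (working with $\varepsilon+\eta$, $\delta+\eta$ since the infima need not be attained, and the count $|{\cal N}|\cdot 2^{n-1}\le 2^{[\log_2|{\cal N}|]+1}\cdot 2^{n-1}=2^{(n+[\log_2|{\cal N}|]+1)-1}$, implicitly for finite ${\cal N}$, the statement being vacuous otherwise) are exactly the ones you handle. The paper itself gives no proof of this theorem, merely citing Lifshits, and your argument is the standard one behind that result, so there is nothing to compare beyond noting agreement.
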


\section{Estimates for entropy numbers of function classes on a set with tree-like structure}

First we give some notations.

Let $(\Omega, \, \Sigma, \, {\rm mes})$ be a measure space. We say
that sets $A$, $B\subset \Omega$ are disjoint if ${\rm mes}(A\cap
B)=0$. Let $E$, $E_1, \, \dots, \, E_m\subset \Omega$ be
measurable sets, and let $m\in \N\cup \{\infty\}$. We say that
$\{E_i\}_{i=1}^m$ is a partition of $E$ if the sets $E_i$ are
pairwise disjoint and ${\rm mes}\left(\left(\cup _{i=1}^m
E_i\right)\bigtriangleup E\right)=0$.

Denote by $\chi_E(\cdot)$ the indicator function of a set $E$.

Let ${\cal G}$ be a graph containing at most countable number of
vertices. We shall denote by ${\bf V}({\cal G})$ and by ${\bf
E}({\cal G})$ the set of vertices and the set of edges of ${\cal
G}$, respectively. Two vertices are called {\it adjacent} if there
is an edge between them. Let $\xi_i\in {\bf V}({\cal G})$, $1\le
i\le n$. The sequence $(\xi_1, \, \dots, \, \xi_n)$ is called a
{\it path} if the vertices $\xi_i$ and $\xi_{i+1}$ are adjacent
for any $i=1, \, \dots , \, n-1$. If all the vertices $\xi_i$ are
distinct, then such a path is called {\it simple}.

Let $({\cal T}, \, \xi_0)$ be a tree with a distinguished vertex
(or a root) $\xi_0$. We introduce a partial order on ${\bf
V}({\cal T})$ as follows: we say that $\xi'>\xi$ if there exists a
simple path $(\xi_0, \, \xi_1, \, \dots , \, \xi_n, \, \xi')$ such
that $\xi=\xi_k$ for some $k\in \overline{0, \, n}$. In this case,
we set $\rho_{{\cal T}}(\xi, \, \xi')=\rho_{{\cal T}}(\xi', \,
\xi) =n+1-k$. In addition, we denote $\rho_{{\cal T}}(\xi, \,
\xi)=0$. If $\xi'>\xi$ or $\xi'=\xi$, then we write $\xi'\ge \xi$.
This partial order on ${\cal T}$ induces a partial order on its
subtree.

Given $j\in \Z_+$, $\xi\in {\bf V}({\cal T})$, we denote
$$
\label{v1v}{\bf V}_j(\xi):={\bf V}_j ^{{\cal T}}(\xi):=
\{\xi'\ge\xi:\; \rho_{{\cal T}}(\xi, \, \xi')=j\}.
$$
For $\xi\in {\bf V}({\cal T})$ we denote by ${\cal T}_\xi=({\cal
T}_\xi, \, \xi)$ the subtree in ${\cal T}$ with vertex set
\begin{align}
\label{vpvtvpv} \{\xi'\in {\bf V}({\cal T}):\xi'\ge \xi\}.
\end{align}

Let ${\cal G}$ be a subgraph in ${\cal T}$. Denote by ${\bf
V}_{\max} ({\cal G})$ and ${\bf V}_{\min}({\cal G})$ the sets of
maximal and minimal vertices in ${\cal G}$, respectively.

Let ${\bf W}\subset {\bf V}({\cal T})$. We say that ${\cal
G}\subset {\cal T}$ is a maximal subgraph on the set of vertices
${\bf W}$ if ${\bf V}({\cal G})={\bf W}$ and any two vertices
$\xi'$, $\xi''\in {\bf W}$ adjacent in ${\cal T}$ are also
adjacent in ${\cal G}$.

Let $\{{\cal T}_j\}_{j\in \N}$ be a family of subtrees in ${\cal
T}$ such that ${\bf V}({\cal T}_j)\cap {\bf V}({\cal T}_{j'})
=\varnothing$ for $j\ne j'$ and $\cup _{j\in \N} {\bf V}({\cal
T}_j) ={\bf V}({\cal T})$. Then $\{{\cal T}_j\} _{j\in \N}$ is
called a partition of the tree ${\cal T}$. Let $\xi_j$ be the
minimal vertex of ${\cal T}_j$. We say that the tree ${\cal T}_s$
succeeds the tree ${\cal T}_j$ (or ${\cal T}_j$ precedes the tree
${\cal T}_s$) if $\xi_j<\xi_s$ and $$\{\xi\in {\cal T}:\; \xi_j\le
\xi<\xi_s\} \subset {\bf V}({\cal T}_j).$$

We consider the function spaces on sets with tree-like structure
from \cite{vas_width_raspr}.

Let $(\Omega, \, \Sigma, \, {\rm mes})$ be a measure space, let
$\hat\Theta$ be a countable partition of $\Omega$ into measurable
subsets, let ${\cal A}$ be a tree with a root such that
\begin{align}
\label{c_v1_a} \exists c_1\ge 1:\quad {\rm card}\, {\bf
V}_1(\xi)\le c_1, \quad \xi \in {\bf V}({\cal A}),
\end{align}
and let $\hat F:{\bf V}({\cal A}) \rightarrow \hat\Theta$ be a
bijective mapping.

Throughout we consider at most countable partitions into
measurable subsets.

Let $1< p\le \infty$, $1\le q< \infty$ be arbitrary numbers. We
suppose that, for any measurable subset $E\subset \Omega$, the
following spaces are defined:
\begin{itemize}
\item the space $X_p(E)$ with seminorm $\|\cdot\|_{X_p(E)}$,
\item the space $Y_q(E)$ with seminorm $\|\cdot\|_{Y_q(E)}$,
\end{itemize}
which all satisfy the following conditions:
\begin{enumerate}
\item $X_p(\Omega)\subset Y_q(\Omega)$;
\item $X_p(E)=\{f|_E:\; f\in X_p(\Omega)\}$, $Y_q(E)=\{f|_E:\; f\in
Y_q(\Omega)\}$;
\item if ${\rm mes}\, E=0$, then $\dim \, Y_q(E)=\dim \, X_p(E)=0$;
\item if $E\subset \Omega$, $E_j\subset \Omega$ ($j\in \N$)
are measurable subsets, $E=\sqcup _{j\in \N} E_j$, then
\begin{align}
\label{f_xp} \|f\|_{X_p(E)}=\left\| \bigl\{
\|f|_{E_j}\|_{X_p(E_j)}\bigr\}_{j\in \N}\right\|_{l_p},\quad f\in
X_p(E),
\end{align}
\begin{align}
\label{f_yq} \|f\|_{Y_q(E)}=\left\| \bigl\{\|f|_{E_j}\|
_{Y_q(E_j)}\bigr\}_{j\in \N}\right\|_{l_q}, \quad f\in Y_q(E);
\end{align}
\item if $E\in \Sigma$, $f\in Y_q(\Omega)$, then $f\cdot \chi_E\in
Y_q(\Omega)$.
\end{enumerate}

Let ${\cal P}(\Omega)\subset X_p(\Omega)$ be a subspace of finite
dimension $r_0$ and let $\|f\|_{X_p(\Omega)}=0$ for any $f\in
{\cal P}(\Omega)$. For each measurable subset $E\subset \Omega$ we
write ${\cal P}(E)=\{P|_E:\; P\in {\cal P}(\Omega)\}$. Let
$G\subset \Omega$ be a measurable subset and let $T$ be a
partition of $G$. We set
\begin{align}
\label{st_omega} {\cal S}_{T}(\Omega)=\{f:\Omega\rightarrow \R:\,
f|_E\in {\cal P}(E), \; f|_{\Omega\backslash G}=0\}.
\end{align}
If $T$ is finite, then ${\cal S}_{T}(\Omega)\subset Y_q(\Omega)$
(see property 5).

For any finite partition $T=\{E_j\}_{j=1}^n$ of the set $E$ and
for each function $f\in Y_q(\Omega)$ we put
$$
\|f\|_{p,q,T}=\left(\sum \limits _{j=1}^n \|f|_{E_j}\|_{Y_q(E_j)}
^{\sigma_{p,q}}\right)^{\frac{1}{\sigma_{p,q}}},
$$
where $\sigma_{p,q}=\min\{p, \, q\}$. Denote by $Y_{p,q,T}(E)$ the
space $Y_q(E)$ with the norm $\|\cdot\|_{p,q,T}$. Notice that
$\|\cdot\| _{Y_q(E)}\le \|\cdot\|_{p,q,T}$.

For each subtree ${\cal A}'\subset {\cal A}$ we set $\Omega
_{{\cal A}'}=\cup _{\xi\in {\bf V}({\cal A}')} \hat F(\xi)$.

\begin{Sup}
\label{sup1} There is a function $w_*:{\bf V}({\cal A})\rightarrow
(0, \, \infty)$ with the following property: for any $\hat\xi\in
{\bf V}({\cal A})$ there exists a linear continuous operator
$P_{\Omega_{{\cal A}_{\hat\xi}}}:Y_q(\Omega)\rightarrow {\cal
P}(\Omega)$ such that for any function $f\in X_p(\Omega)$ and any
subtree ${\cal A}'\subset {\cal A}$ rooted at $\hat\xi$
\begin{align}
\label{f_pom_f} \|f-P_{\Omega_{{\cal
A}_{\hat\xi}}}f\|_{Y_q(\Omega_{{\cal A}'})}\le w_*(\hat\xi)\|f\|
_{X_p(\Omega_{{\cal A}'})}.
\end{align}
\end{Sup}

\begin{Sup}
\label{sup2} There exist a function $\tilde w_*:{\bf V}({\cal
A})\rightarrow (0, \, \infty)$ and numbers $\delta_*>0$, $c_2\ge
1$ such that for each vertex $\xi\in {\bf V}({\cal A})$ and for
any $n\in \N$, $m\in \Z_+$ there is a partition $T_{m,n}(G)$ of
the set $G=\hat F(\xi)$ with the following properties:
\begin{enumerate}
\item ${\rm card}\, T_{m,n}(G)\le c_2\cdot 2^mn$.
\item For any $E\in T_{m,n}(G)$ there exists a linear continuous operator
$P_E:Y_q(\Omega)\rightarrow {\cal P}(E)$ such that for any
function $f\in X_p(\Omega)$
\begin{align}
\label{fpef} \|f-P_Ef\|_{Y_q(E)}\le (2^mn)^{-\delta_*}\tilde
w_*(\xi) \|f\| _{X_p(E)}.
\end{align}
\item For any $E\in T_{m,n}(G)$
\begin{align}
\label{ceptm} {\rm card}\,\{E'\in T_{m\pm 1,n}(G):\, {\rm
mes}(E\cap E') >0\} \le c_2.
\end{align}
\end{enumerate}
\end{Sup}

\begin{Sup}
\label{sup3} There exist $k_*\in \N$, $\lambda_*\ge 0$,
\begin{align}
\label{mu_ge_lambda} \mu_*\ge \lambda_*,
\end{align}
$\gamma_*>0$, absolutely continuous functions $u_*:(0, \, \infty)
\rightarrow (0, \, \infty)$ and $\psi_*:(0, \, \infty) \rightarrow
(0, \, \infty)$, $c_3\ge 1$, $t_0\in \N$, a partition $\{{\cal
A}_{t,i}\}_{t\ge t_0, \, i\in \hat J_t}$ of the tree ${\cal A}$
such that $\lim \limits _{y\to \infty} \frac{yu_*'(y)}{u_*(y)}=0$,
$\lim \limits _{y\to \infty} \frac{y\psi_*'(y)}{\psi_*(y)}=0$,
\begin{align}
\label{w_s_2} c_3^{-1} 2^{-\lambda_*k_*t}u_*(2^{k_*t}) \le
w_*(\xi)\le c_3\cdot 2^{-\lambda_*k_*t}u_*(2^{k_*t}), \quad \xi
\in {\bf V}({\cal A}_{t,i}),
\end{align}
\begin{align}
\label{til_w_s_2} c_3^{-1} 2^{-\mu_*k_*t}u_*(2^{k_*t}) \le \tilde
w_*(\xi)\le c_3\cdot 2^{-\mu_*k_*t}u_*(2^{k_*t}), \quad \xi \in
{\bf V}({\cal A}_{t,i}),
\end{align}
and for $\nu_t:= \sum \limits _{i\in \hat J_t} {\rm card}\, {\bf
V}({\cal A}_{t,i})$ one of the following estimates holds:
\begin{align}
\label{nu_t_k} \nu_t\le c_3\cdot 2^{\gamma_*k_*t}
\psi_*(2^{k_*t})=: c_3 \overline{\nu}_t,\quad t\ge t_0,
\end{align}
or
\begin{align}
\label{nu_t_k1} k_*=1, \quad \nu_t\le c_3\cdot 2^{\gamma_*2^{t}}
\psi_*(2^{2^{t}})=: c_3 \overline{\nu}_t,\quad t\ge t_0.
\end{align}
In addition, we assume that the following assertions hold.
\begin{enumerate}
\item If $p>q$, then
\begin{align}
\label{bipf4684gn} 2^{-\lambda_*k_*t} ({\rm card}\, \hat
J_t)^{\frac 1q-\frac 1p} \le c_3\cdot
2^{-\mu_*k_*t}\overline{\nu}_t^{\frac 1q-\frac 1p}.
\end{align}
\item Let $t$, $t'\in \Z_+$. Then
\begin{align}
\label{2l} 2^{-\lambda_*k_*t'}u_*(2^{k_*t'})\le c_3\cdot
2^{-\lambda_*k_*t}u_*(2^{k_*t}) \quad\text{if}\quad  t'\ge t,
\end{align}
\begin{align}
\label{2ll}
\begin{array}{c} 2^{-\mu_*k_*t'}u_*(2^{k_*t'})\overline{\nu}_{t'}^{\frac 1q-\frac 1p}\le \\ \le c_3\cdot
2^{-\mu_*k_*t}u_*(2^{k_*t})\overline{\nu}_t^{\frac 1q-\frac 1p}
\quad\text{if}\quad t'\ge t, \quad p>q.
\end{array}
\end{align}

\item If the tree ${\cal A}_{t',i'}$ succeeds the tree ${\cal
A}_{t,i}$, then $t'=t+1$.
\end{enumerate}
\end{Sup}
\begin{Rem}
\label{rem_suc} If $\xi \in {\bf V}({\cal A}_{t,i})$, $\xi' \in
{\bf V}({\cal A}_{t',i'})$, $\xi'>\xi$, then $t'>t$.
\end{Rem}

\begin{Rem}
\label{1511} If $p>q$, then from (\ref{2ll}) it follows that
(\ref{nu_t_k1}) cannot hold.
\end{Rem}

\label{ati_label}We introduce some more notation.
\begin{itemize}
\item $\hat \xi_{t,i}$ is the minimal vertex of the tree ${\cal
A}_{t,i}$.
\item $\Gamma _t$ is the maximal subgraph in
${\cal A}$ on the set of vertices $\cup _{i\in \hat J_t} {\bf
V}({\cal A}_{t,i})$, $t\ge t_0$; for $1\le t<t_0$ we put
$\Gamma_t=\varnothing$, $\hat J_t=\varnothing$.
\item $G_t=\cup_{\xi\in {\bf V}(\Gamma_t)}\hat F(\xi)=\cup_{i\in \hat
J_t} \Omega_{{\cal A}_{t,i}}$.
\item $\tilde \Gamma_t$ is the maximal subgraph
on the set of vertices $\cup _{j\ge t} {\bf V}(\Gamma_j)$, $t\in
\N$.
\item $\{\tilde {\cal A}_{t,i}\}_{i\in \overline{J}_t}$
is the set of connected components of the graph $\tilde \Gamma
_t$.
\item $\tilde U_{t,i}=\cup _{\xi\in {\bf V}(\tilde{\cal
A}_{t,i})}\hat F(\xi)$.
\item $\tilde U_t =\cup _{i\in \overline{J}_t} \tilde U_{t,i} =\cup
_{\xi\in {\bf V}(\tilde \Gamma_t)} \hat F(\xi)$.
\end{itemize}
If $t\ge t_0$, then
\begin{align}
\label{v_min_gt} {\bf V}_{\min}(\tilde \Gamma_t)= {\bf
V}_{\min}(\Gamma_t)=\{\hat \xi_{t,i}\}_{i\in \hat J_t}
\end{align}
(see \cite[p. 30]{vas_width_raspr}), and we may assume that
\begin{align}
\label{ovrl_it_eq_hat_it} \overline{J}_t=\hat J_t, \quad t\ge t_0.
\end{align}
The set $\hat J_{t_0}$ is a singleton. Denote $\{i_0\}=\hat
J_{t_0}$.

We set $\mathfrak{Z}_0=(p, \, q, \, r_0,\, w_*, \, \tilde w_*, \,
\delta_*, \, k_*, \, \lambda_*, \, \mu_*,\, \gamma_*, \, \psi_*,\,
u_*,\, c_1, \, c_2, \, c_3)$.

From Assumption \ref{sup1} it follows that there exists a linear
continuous operator $\hat P:Y_q(\Omega) \rightarrow {\cal
P}(\Omega)$ such that for any function $f\in X_p(\Omega)$
\begin{align}
\label{fpfyq} \|f-\hat Pf\|_{Y_q(\Omega)}
\underset{\mathfrak{Z}_0}{\lesssim} \|f\|_{X_p(\Omega)}
\end{align}
(we take as $\hat P$ the operator $P_{\Omega_{{\cal A}_{\hat
\xi_{t_0,i_0}}}}$). We set
$$
\hat X_p(\Omega)=\{f-\hat Pf:\; f\in X_p(\Omega)\}.
$$
Then $\hat X_p(\Omega) \subset X_p(\Omega)$. Moreover, since
$Y_q(\Omega)$ is a normed space and $\|f\|_{X_p(\Omega)}=\|f-\hat
Pf\|_{X_p(\Omega)}$ (by the property of ${\cal P}(\Omega)$), then
$(\hat X_p(\Omega), \, \|\cdot\|_{X_p(\Omega)})$ is a normed
space. Indeed, if $\|f-\hat Pf\|_{X_p(\Omega)}=0$, then $\|f-\hat
Pf\|_{Y_q(\Omega)}\stackrel{(\ref{fpfyq})}{=}0$ and $f-\hat Pf=0$.

Denote by $B\hat X_p(\Omega)$ the unit ball of $\hat X_p(\Omega)$.
Let $I:\hat X_p(\Omega) \rightarrow Y_q(\Omega)$ be the embedding
operator.

The following assertions were proved in \cite[p. 30,
32]{vas_width_raspr}.
\begin{Lem}
\label{obr} There exists $x_0\in (0, \, \infty)$ such that for any
$x\ge x_0$ the equation $y^{\gamma_*}\psi_*(y)=x$ has a unique
solution $y(x)$. Moreover, $y(x)=x^{\beta_*}\varphi_*(x)$, where
$\beta_*=\frac{1}{\gamma_*}$ and $\varphi_*$ is an absolutely
continuous function such that $\lim _{x\to +\infty}
\frac{x\varphi_*'(x)}{\varphi_*(x)}=0$.
\end{Lem}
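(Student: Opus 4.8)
The plan is to reduce the statement to a transcendental equation for $y$ and then apply the hypotheses on $\psi_*$. First I would recall that $\psi_*$ is an absolutely continuous positive function on $(0,\infty)$ with $\lim_{y\to\infty} y\psi_*'(y)/\psi_*(y)=0$; this slowly-varying-type condition is exactly what guarantees that $y\mapsto y^{\gamma_*}\psi_*(y)$ is eventually strictly increasing and unbounded, hence eventually a bijection onto a ray $[x_0,\infty)$. Concretely, set $g(y)=y^{\gamma_*}\psi_*(y)$. Then $\log g(y) = \gamma_* \log y + \log\psi_*(y)$, and $\frac{d}{d\log y}\log g(y) = \gamma_* + \frac{y\psi_*'(y)}{\psi_*(y)}$, which tends to $\gamma_*>0$ as $y\to\infty$. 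Therefore there is $y_0$ such that this logarithmic derivative is $\ge \gamma_*/2>0$ for $y\ge y_0$, so $\log g$ (and hence $g$) is strictly increasing on $[y_0,\infty)$, and moreover $\log g(y)\ge \gamma_*/2\cdot(\log y - \log y_0) + \log g(y_0)\to\infty$. Set $x_0 = g(y_0)$; for each $x\ge x_0$ the equation $g(y)=x$ then has a unique solution $y(x)\ge y_0$, proving the first claim.

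Next I would establish the asymptotic form $y(x)=x^{\beta_*}\varphi_*(x)$ with $\beta_*=1/\gamma_*$. The natural move is to define $\varphi_*(x) := y(x)\, x^{-\beta_*}$ for $x\ge x_0$ (extended arbitrarily, e.g. by a constant, on $(0,x_0)$ to keep it positive and absolutely continuous), so that the factorization holds by construction; the content is then to show $\lim_{x\to\infty} x\varphi_*'(x)/\varphi_*(x)=0$. Since $y$ is the inverse of the absolutely continuous strictly increasing function $g$ on $[y_0,\infty)$ with $g'$ bounded away from $0$ locally, $y$ is itself absolutely continuous, hence so is $\varphi_*$. Taking logarithms, $\log\varphi_*(x) = \log y(x) - \beta_*\log x$, so
$$
\frac{x\varphi_*'(x)}{\varphi_*(x)} = \frac{x\,y'(x)}{y(x)} - \beta_* = \frac{d\log y}{d\log x} - \frac{1}{\gamma_*}.
$$
From $g(y(x))=x$ we get, differentiating, $\frac{d\log x}{d\log y} = \gamma_* + \frac{y\psi_*'(y)}{\psi_*(y)}$ evaluated at $y=y(x)$, hence $\frac{d\log y}{d\log x} = \left(\gamma_* + \frac{y\psi_*'(y)}{\psi_*(y)}\right)^{-1}$. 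As $x\to\infty$ we have $y(x)\to\infty$, so $\frac{y\psi_*'(y)}{\psi_*(y)}\to 0$, and therefore $\frac{d\log y}{d\log x}\to 1/\gamma_*=\beta_*$, giving $\frac{x\varphi_*'(x)}{\varphi_*(x)}\to 0$ as required.

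The only mildly delicate points are: (i) ensuring the derivative identities are valid almost everywhere rather than everywhere — this is fine because $g$ is absolutely continuous and strictly monotone with a.e. derivative $g'(y) = y^{\gamma_*-1}\psi_*(y)\bigl(\gamma_* + y\psi_*'(y)/\psi_*(y)\bigr)$ that is positive and locally bounded below on $[y_0,\infty)$, so the inverse function $y(x)$ is locally Lipschitz there and the chain rule for a.e. differentiation of the composition applies; and (ii) checking that $x\varphi_*'(x)/\varphi_*(x)\to 0$ in the genuine limit sense, which follows since the right-hand side above is a fixed continuous (in fact a.e.-defined monotone-error) function of $y(x)$ tending to $0$. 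I expect the main obstacle to be purely bookkeeping: writing everything in terms of the logarithmic variable $s=\log y$, $x = e^{\gamma_* s}\psi_*(e^s)$, cleanly enough that the absolute-continuity and the limit both drop out without circular reasoning. No new ideas beyond the standard Karamata-type argument for slowly varying functions are needed.
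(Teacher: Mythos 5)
Your argument is correct and is the standard Karamata-type argument that the cited reference (Vasil'eva, J.~Appr.~Theory {\bf 192} (2015)) also uses: pass to logarithmic coordinates, note that $\frac{d\log g}{d\log y}=\gamma_*+\frac{y\psi_*'(y)}{\psi_*(y)}\to\gamma_*>0$ gives eventual strict monotonicity and unboundedness of $g(y)=y^{\gamma_*}\psi_*(y)$, and then read off $\frac{x\varphi_*'(x)}{\varphi_*(x)}=\frac{d\log y}{d\log x}-\beta_*\to 0$ from the inverse-function relation. The measure-theoretic points you flag (absolute continuity and a.e.\ positivity of $g'$ guaranteeing absolute continuity of the inverse, and the a.e.\ chain rule) are indeed the only things needing care, and your sketch of them is adequate.
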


\begin{Lem}
\label{log} Let $\gamma_*>0$, $\psi_*(y)=|\log
y|^{\alpha_*}\rho_*(|\log y|)$, where $\rho_*:(0, \, \infty)
\rightarrow (0, \, \infty)$ is an absolutely continuous function
such that $\lim \limits _{y\to \infty} \frac{y
\rho_*'(y)}{\rho_*(y)}=0$. Let $\varphi_*$ be such as in Lemma
\ref{obr}. Then for sufficiently large $x>1$
$$
\varphi_*(x)\underset{\gamma_*, \alpha_*,\rho_*}{\asymp} (\log
x)^{-\frac{\alpha_*}{\gamma_*}}\left[\rho_*(\log
x)\right]^{-\frac{1}{\gamma_*}}.
$$
\end{Lem}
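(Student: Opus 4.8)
The final statement to prove is Lemma \ref{log}, which evaluates the asymptotic behaviour of the correction factor $\varphi_*$ from Lemma \ref{obr} when $\psi_*$ has the specific form $\psi_*(y)=|\log y|^{\alpha_*}\rho_*(|\log y|)$.

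\medskip

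The plan is to start from the defining equation $y^{\gamma_*}\psi_*(y)=x$ and take logarithms, converting the multiplicative relation into an additive one amenable to iteration. Writing $L=\log x$ and $\ell=\log y$, the equation becomes $\gamma_*\ell+\log\psi_*(2^\ell)=L$, i.e. $\gamma_*\ell+\alpha_*\log\ell+\log\rho_*(\ell)=L$ (up to the harmless ambiguity in $|\log y|$ versus $\log y$ for large $y$). From this one reads off immediately that $\ell\sim L/\gamma_*$ as $x\to+\infty$: the dominant term is $\gamma_*\ell$, and the slowly varying corrections $\log\ell$ and $\log\rho_*(\ell)$ are $o(\ell)$ because the hypothesis $\lim_{y\to\infty} y\rho_*'(y)/\rho_*(y)=0$ forces $\log\rho_*(y)=o(\log y)$ (a standard fact: a function with logarithmic derivative tending to zero grows or decays slower than any power, hence its logarithm is $o(\log y)$). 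So $\log y=\log x/\gamma_* + O(\log\log x)$, and in particular $\log\log y = \log\log x + o(1)$, i.e. $\log\ell = \log L - \log\gamma_* + o(1)$.

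\medskip

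Next I would recover $\varphi_*$. By Lemma \ref{obr}, $y=x^{\beta_*}\varphi_*(x)$ with $\beta_*=1/\gamma_*$, so $\log\varphi_*(x)=\log y-\frac{1}{\gamma_*}\log x=\ell-\frac{L}{\gamma_*}$. From the logarithmic equation, $\ell-\frac{L}{\gamma_*}=-\frac{1}{\gamma_*}\bigl(\alpha_*\log\ell+\log\rho_*(\ell)\bigr)$. Substituting $\log\ell=\log L+O(1)$ (more precisely $\log\ell = \log L - \log\gamma_*+o(1)$) gives $\log\varphi_*(x)=-\frac{\alpha_*}{\gamma_*}\log L-\frac{1}{\gamma_*}\log\rho_*(\ell)+O(1)$. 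Exponentiating, $\varphi_*(x)\asymp L^{-\alpha_*/\gamma_*}\rho_*(\ell)^{-1/\gamma_*}$, and it remains to replace $\rho_*(\ell)=\rho_*(\log y)$ by $\rho_*(\log x)$. Since $\log y = \log x/\gamma_* + O(\log\log x)$ and $\rho_*$ has logarithmic derivative tending to zero, one has $\rho_*(\log y)\asymp\rho_*(\log x)$: indeed $\log\rho_*(b)-\log\rho_*(a)=\int_a^b \frac{\rho_*'(s)}{\rho_*(s)}\,ds$, and on $[a,b]=[c\log x, C\log x]$ the integrand is $o(1/s)$, so the integral is $o(\log(b/a))=o(1)$. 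This gives the claimed equivalence $\varphi_*(x)\asymp(\log x)^{-\alpha_*/\gamma_*}[\rho_*(\log x)]^{-1/\gamma_*}$, with constants depending only on $\gamma_*,\alpha_*,\rho_*$.

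\medskip

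The main technical obstacle is the careful bookkeeping of the slowly varying factors: one must rigorously justify that $\log\rho_*(\ell)=o(\ell)$ and that $\rho_*$ is ``stable under the substitution $\log x \mapsto \log y$'', both of which rest on the hypothesis \eqref{psi_cond}-type condition $\lim_{y\to\infty} y\rho_*'(y)/\rho_*(y)=0$ and its consequences for functions of slow variation. A clean way to package this is to prove once and for all the lemma: if $\lim_{y\to\infty} y f'(y)/f(y)=0$, then for any $\varepsilon>0$, $y^{-\varepsilon}\lesssim_\varepsilon f(y)\lesssim_\varepsilon y^\varepsilon$ for large $y$, and $f(cy)\asymp_c f(y)$; everything else is then routine substitution into the logarithmic form of the defining equation. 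No delicate estimates beyond this are needed, since we only want the order (asymptotic equivalence up to constants), not a sharp asymptotic expansion.
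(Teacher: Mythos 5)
Your argument is correct: logarithmizing the defining equation $y^{\gamma_*}\psi_*(y)=x$, bootstrapping $\log y = \log x/\gamma_* + O(\log\log x)$, and then invoking slow variation of $\rho_*$ to replace $\rho_*(\log y)$ by $\rho_*(\log x)$ gives exactly the stated two-sided bound. The paper does not present a proof of Lemma~\ref{log} (it cites \cite[p.~30, 32]{vas_width_raspr}), so a line-by-line comparison is not possible, but your route is the standard one and the auxiliary fact you propose to isolate ($y^{-\varepsilon}\lesssim_\varepsilon f(y)\lesssim_\varepsilon y^\varepsilon$ and $f(cy)\asymp_c f(y)$ for positive $f$ with $yf'(y)/f(y)\to 0$) is precisely the content of the paper's Lemma~\ref{sum_lem}, so the two arguments almost certainly coincide.
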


\begin{Trm}
\label{trm1} Let $1< p\le \infty$, $1\le q< \infty$, let
assumptions \ref{sup1}, \ref{sup2} and \ref{sup3} with
(\ref{nu_t_k}) hold, and let
\begin{align}
\label{delpq} \delta_*>\left(\frac 1q-\frac 1p\right)_+.
\end{align}
Suppose that $\delta_*\ne \lambda_*\beta_*$ for $p\le q$ and
$\delta_*\ne \mu_*\beta_*$ for $p>q$. Then there exists
$n_0=n_0(\mathfrak{Z}_0)$ such that for any $n\ge n_0$ the
following estimates hold.
\begin{itemize}
\item Let $p\le q$. We set
$$
\sigma_*(n)=\left\{ \begin{array}{l} 1 \quad \text{for}\quad
\delta_*<\lambda_*\beta_*, \\ u_*(n^{\beta_*} \varphi_*(n))
\varphi_*^{-\lambda_*}(n) \quad \text{for}\quad \delta_* >
\lambda_*\beta_*.\end{array}\right.
$$
Then
\begin{align}
\label{vrth_n_pleq} e_n(I:\hat X_p(\Omega)\rightarrow Y_q(\Omega))
\underset{\mathfrak{Z}_0} {\lesssim} n^{-\min(\delta_*, \,
\lambda_*\beta_*)+\frac 1q-\frac 1p}\sigma_*(n).
\end{align}
\item Let $p>q$. We set
$$
\sigma_*(n)=\left\{ \begin{array}{l} 1 \quad \text{for}\quad
\delta_*<\mu_*\beta_*, \\ u_*(n^{\beta_*} \varphi_*(n))
\varphi_*^{-\mu_*}(n) \quad \text{for}\quad \delta_* >
\mu_*\beta_*.\end{array}\right.
$$
Then
\begin{align}
\label{vrth_n_pgq} e_n(I:\hat X_p(\Omega)\rightarrow Y_q(\Omega))
\underset{\mathfrak{Z}_0} {\lesssim} n^{-\min(\delta_*, \,
\mu_*\beta_*)+\frac 1q-\frac 1p}\sigma_*(n).
\end{align}
\end{itemize}
\end{Trm}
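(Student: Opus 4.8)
The plan is to build, for each $n$, an explicit finite-dimensional approximating subspace adapted to the partition $\{{\cal A}_{t,i}\}$ and then combine the local estimates from Assumptions~\ref{sup1}--\ref{sup3} with Theorem~\ref{shutt_trm} (Sch\"utt's estimates for $e_k(l_p^\nu\to l_q^\nu)$) and the subadditivity/multiplicativity properties (\ref{eklspt}), (\ref{mult_n}) together with Lifshits' splitting bound (Theorem~\ref{lifs_sta}). The rough idea: split the index $t$ into a ``head'' $t\le t_*$ and a ``tail'' $t>t_*$, where $t_*$ is chosen as a function of $n$ so that the tail contributes at most the target order. On the head, $\tilde U_{t}$ carries only $\asymp \sum_{t\le t_*}\overline\nu_t$ cells, on each of which $X_p$ functions are approximated by polynomials from ${\cal P}(E)$ with error controlled by (\ref{f_pom_f}) and (\ref{fpef}); after choosing in Assumption~\ref{sup2} the refinement parameters $(m,n)$ suitably per vertex, the head is reduced to a direct sum of finitely many embeddings $l_p^{\nu}\to l_q^\nu$ whose entropy numbers are summed via (\ref{eklspt}). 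The tail is estimated crudely: on $\tilde U_{t_*+1}$ one uses a single polynomial projection $P_{\Omega_{{\cal A}_{\hat\xi}}}$ on each component and bounds the residual norm by $w_*(\hat\xi)\lesssim 2^{-\lambda_* k_* t_*}u_*(2^{k_*t_*})$ (or with $\mu_*$ when $p>q$), which by (\ref{2l})/(\ref{2ll}) is the dominant weight there.

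The key technical steps, in order, would be: (1) Fix the head/tail threshold. For $p\le q$ I expect to take $t_*=t_*(n)$ so that $\sum_{t\le t_*}\overline\nu_t\asymp n$, i.e. $2^{\gamma_* k_* t_*}\psi_*(2^{k_*t_*})\asymp n$; by Lemmas~\ref{obr}, \ref{log} this gives $2^{k_*t_*}\asymp n^{\beta_*}\varphi_*(n)$, which is exactly the argument appearing inside $u_*,\varphi_*$ in $\sigma_*(n)$. (2) Distribute the ``budget'' $n$ of entropy-number steps across the scales $t\le t_*$: assign to scale $t$ roughly $n_t\asymp n\,2^{-c(t_*-t)}$ steps for a small $c>0$ (geometric allocation), so that $\sum_t n_t\lesssim n$ while each scale still gets a positive share. (3) On scale $t$, invoke Assumption~\ref{sup2} with $m$ chosen so that $(2^m\cdot(\text{something}))^{-\delta_*}\tilde w_*(\xi)$ matches the local target; the resulting finitely many cells give, via property~\ref{f_xp}--\ref{f_yq} and $\sigma_{p,q}$-aggregation, an embedding of an $l_p$-sum into an $l_q$-sum over $\asymp \overline\nu_t\cdot 2^m$ coordinates with norm factor $2^{-\lambda_* k_* t}u_*(2^{k_*t})$ (resp.\ $2^{-\mu_* k_* t}u_*$). (4) Apply Theorem~\ref{shutt_trm} to each such block with $k=n_t$ steps, multiply by the weight (\ref{mult_n}), and sum over $t$ using (\ref{eklspt}); the hypothesis $\delta_*\ne\lambda_*\beta_*$ (resp.\ $\mu_*\beta_*$) ensures the resulting geometric-type sum over $t$ is dominated by one endpoint, producing the $\min(\delta_*,\lambda_*\beta_*)$ (resp.\ $\min(\delta_*,\mu_*\beta_*)$) exponent and the stated $\sigma_*(n)$. (5) Handle the tail by Theorem~\ref{lifs_sta}: the number of components of $\tilde\Gamma_{t_*+1}$ is at most (number of cells up to scale $t_*$) $\lesssim n$, so $[\log|{\cal N}|]+1\lesssim \log n$, which only shifts the index and costs a constant; the approximation error is the single-operator bound $\sup_\nu\inf\|Vx-V_\nu x\|\lesssim 2^{-\lambda_* k_* t_*}u_*(2^{k_*t_*})$ and by the choice of $t_*$ this is $\lesssim n^{-\lambda_*\beta_*}u_*(n^{\beta_*}\varphi_*(n))\varphi_*^{-\lambda_*}(n)\cdot(\dots)$, i.e.\ of the claimed order (the factor $n^{\frac1q-\frac1p}$ being harmless since $\frac1q-\frac1p\le0$ when $p\le q$, and absorbed into the block estimates when $p>q$).

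For $p>q$ the geometry changes slightly: there is no ``$\log\nu$ plateau'' in Sch\"utt's theorem, only the regime $e_k\asymp 2^{-k/\nu}\nu^{1/q-1/p}$, so I would instead balance directly $2^{-n_t/\nu_t}\nu_t^{1/q-1/p}$ against the weight $2^{-\mu_* k_* t}u_*$ over scales, using (\ref{bipf4684gn}) and (\ref{2ll}) to control the interplay between ${\rm card}\,\hat J_t$ and $\overline\nu_t$; alternatively one can route through Theorem~\ref{kuhn_trm} on a suitable diagonal operator. Either way the exponent becomes $\min(\delta_*,\mu_*\beta_*)$.

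The main obstacle I anticipate is step~(2)--(3): choosing the per-scale allocation $n_t$ and the refinement parameters $m=m(t,n)$ simultaneously so that (i) the total number of entropy steps stays $\lesssim n$, (ii) every block's contribution after multiplying by its weight is at most the target order $n^{-\min(\dots)+\frac1q-\frac1p}\sigma_*(n)$, and (iii) the sum over $t$ telescopes/converges. This is where the two-sided weight estimates (\ref{w_s_2}), (\ref{til_w_s_2}), the monotonicity conditions (\ref{2l}), (\ref{2ll}), and the slow-variation of $u_*,\psi_*$ (hence of $\varphi_*$ via Lemma~\ref{obr}) all get used, and where the case split $\delta_*\lessgtr\lambda_*\beta_*$ (resp.\ $\mu_*\beta_*$) determines whether $\sigma_*(n)\equiv1$ or picks up the $u_*\varphi_*^{-\lambda_*}$ (resp.\ $u_*\varphi_*^{-\mu_*}$) factor. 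Everything else — the polynomial approximation, the passage to sequence spaces, the Lifshits tail bound — is by now routine given the framework of \cite{vas_width_raspr}.
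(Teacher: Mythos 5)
Your overall strategy — decompose $f$ according to the scale parameter $t$, allocate a geometric budget of entropy steps, reduce each block to Sch\"utt's bounds via Lemma~\ref{oper_a}, sum via (\ref{eklspt}), and choose the head/tail threshold so that $2^{k_*t_*}\asymp n^{\beta_*}\varphi_*(n)$ via Lemmas~\ref{obr}, \ref{log} — is exactly the paper's plan, and for $p\ge q$ it goes through essentially as you describe: the tail $f-Q_{t_*(n)}f$ is a fixed operator whose norm is estimated directly (Lemma~\ref{qt_st_n}), with no need for Theorem~\ref{lifs_sta}, and the head is covered by Lemmas~\ref{sum_qt_est} and \ref{ptm_sum}.

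There is, however, a genuine gap in the case $p<q$, concentrated in your parenthetical ``the factor $n^{\frac1q-\frac1p}$ being harmless since $\frac1q-\frac1p\le 0$.'' This is backwards: when $p<q$ the target rate $n^{-\min(\delta_*,\lambda_*\beta_*)+\frac1q-\frac1p}\sigma_*(n)$ carries an \emph{extra decaying} factor $n^{\frac1q-\frac1p}$, so the crude tail bound $\|f-Q_{t_*(n)}f\|\lesssim 2^{-\lambda_*k_*t_*(n)}u_*(2^{k_*t_*(n)})\asymp n^{-\lambda_*\beta_*}\varphi_*^{-\lambda_*}(n)u_*(n^{\beta_*}\varphi_*(n))$ generally fails to reach it — it misses precisely by $n^{\frac1p-\frac1q}$ modulo slowly varying corrections. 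In the paper this is why the tail threshold is pushed from $t_*(n)$ (with $\overline\nu_{t_*(n)}\asymp n$) out to $t_{**}(n)$ (with $\overline\nu_{t_{**}(n)}\asymp 2^n$; see (\ref{t_st_st_n}) and (\ref{est_case1_plq})), and the intermediate scales $t_*(n)\le t<t_{**}(n)$ become a new, nontrivial layer. Handling them needs two extra ingredients absent from your sketch: (i) a separate estimate for $\sum_t\sum_{m\ge m_t} e_{k_{t,m}}(P_{t,m+1}-P_{t,m})$ in the regime $m\ge m_t$ using the logarithmic branch of Theorem~\ref{shutt_trm} (Lemma~\ref{t_g_tstn}), and (ii) an adaptive-partition argument for $\sum_t e_{\tilde k_t}(P_{t,m_t}-Q_t)$: one subdivides each $\Gamma_t$ into $f$-dependent families of subtrees balancing $\sum_\xi\|f\|^p_{X_p(\hat F(\xi))}$ (Lemma~\ref{lemma_o_razb_dereva1}), projects to a polynomial per subtree, and only then invokes Theorem~\ref{lifs_sta} to account for the combinatorial choice — the cardinality $|{\cal N}_{r_t-1}|$ of candidate partitions contributes a nontrivial $\log$ term (\ref{log_nt}) that must be kept $\lesssim n\cdot 2^{-\varepsilon|t-t_*(n)|}$, not merely $\lesssim\log n$ as you assume. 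So Lifshits' lemma is indeed used, but at the middle scales rather than at the genuine tail, and the dominant combinatorial count is over partitions of $\Gamma_t$ into subtrees, not over connected components of $\tilde\Gamma_{t_*+1}$.
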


\begin{Trm}
\label{trm2} Let $1< p< q< \infty$, let assumptions \ref{sup1},
\ref{sup2} and \ref{sup3} with (\ref{nu_t_k1}) hold, and let
$\delta_*>\left(\frac 1q-\frac 1p\right)_+$. Then
\begin{align}
\label{en2_lgpq} e_n(I:\hat X_p(\Omega)\rightarrow Y_q(\Omega))
\underset{\mathfrak{Z}_0} {\lesssim} n^{\frac 1q-\frac 1p} (\log
n)^{-\lambda_*-\frac 1q+\frac 1p} u_*(\log n)
\end{align}
if $\lambda_*>\frac 1p-\frac 1q$, and
\begin{align}
\label{en2_llpq} e_n(I:\hat X_p(\Omega)\rightarrow Y_q(\Omega))
\underset{\mathfrak{Z}_0} {\lesssim} n^{-\lambda_*} u_*(n)
\end{align}
if $\lambda_*<\frac 1p-\frac 1q$.
\end{Trm}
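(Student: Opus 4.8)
The plan is to mimic the scheme used for Theorem \ref{trm1}, but exploiting the faster (doubly exponential) growth $\nu_t\le c_3\cdot 2^{\gamma_*2^t}\psi_*(2^{2^t})$ from (\ref{nu_t_k1}), which forces $k_*=1$ and, by Remark \ref{1511}, is compatible only with $p<q$. First I would split the embedding $I$ according to the partition $\{\Gamma_t\}_{t\ge t_0}$ of the tree ${\cal A}$: write $f-\hat Pf$ on $\Omega$ as a sum of pieces supported on the "layers" $G_t=\cup_{i\in\hat J_t}\Omega_{{\cal A}_{t,i}}$, using the operators $P_{\Omega_{{\cal A}_{\hat\xi_{t,i}}}}$ from Assumption \ref{sup1} to subtract a polynomial on each subtree. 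By (\ref{eklspt}) and (\ref{mult_n}) the entropy number of $I$ is controlled by the sum over $t$ of the entropy numbers of the layer operators $I_t:\hat X_p(\Omega)\to Y_q(G_t)$, together with a tail estimate for the truncation; (\ref{f_pom_f}) and the geometric decay (\ref{w_s_2}) of $w_*$ give the tail bound and show the relevant sum is dominated by finitely many (or geometrically weighted) terms.

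Next I would estimate each layer operator $e_n(I_t)$. On the layer $G_t$ the relevant data are: the number of subtrees is ${\rm card}\,\hat J_t$, the total number of vertices is $\nu_t\le c_3\overline\nu_t=c_3\cdot 2^{\gamma_*2^t}\psi_*(2^{2^t})$, and on each ${\cal A}_{t,i}$ Assumption \ref{sup1} gives an approximation error $\asymp w_*\asymp 2^{-\lambda_*t}u_*(2^t)$, while Assumption \ref{sup2} refines each cell by a factor $(2^m n)^{-\delta_*}\tilde w_*$. I would apply Proposition \ref{lifs_sta} (the Lifshits splitting), combined with Theorem \ref{shutt_trm} for the finite-dimensional blocks $l_p^\nu\to l_q^\nu$ and the discretization via Assumption \ref{sup2}, exactly as in \cite{vas_width_raspr}; this reduces $e_n(I_t)$ to a quantity of the form $2^{-\lambda_*t}u_*(2^t)\cdot n^{1/q-1/p}(\text{logarithmic factor in }\overline\nu_t)$ together with a term reflecting the cost of encoding which of the $\sim\overline\nu_t$ cells are active. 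The doubly exponential growth of $\overline\nu_t$ means $\log\overline\nu_t\asymp 2^{\gamma_* 2^t}$-type quantities collapse to $\asymp 2^t$ after taking logarithms, so the logarithmic corrections become powers of $t$, not of $2^t$.

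Then I would sum over $t$. Setting $n$ and choosing the number of layers and the splitting parameters $m$ optimally, the bound $\sum_t e_{n_t}(I_t)$ with $\sum_t n_t\lesssim n$ becomes a balance between the geometric factor $2^{-\lambda_* t}u_*(2^t)$ and the combinatorial factor coming from $\overline\nu_t$. Writing $N\asymp\log n$ for the effective number of active layers (since the cumulative vertex count up to layer $t$ is $\asymp 2^{\gamma_* 2^t}\psi_*(2^{2^t})$, we get $2^t\asymp\log n$, i.e. $t\asymp\log\log n$ — here I should be careful about which logarithm appears where), the sum is dominated by its largest term. If $\lambda_*>\frac1p-\frac1q$ the series over $t$ converges at the top layer $t\asymp\log\log n$ and produces $n^{1/q-1/p}(\log n)^{-\lambda_*-1/q+1/p}u_*(\log n)$, matching (\ref{en2_lgpq}); if $\lambda_*<\frac1p-\frac1q$ the opposite balance holds, the dominant contribution comes from a single layer with $n_t\asymp n$, and one gets $n^{-\lambda_*}u_*(n)$ as in (\ref{en2_llpq}). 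The slowly varying behaviour of $u_*$ and $\psi_*$ (the conditions $yu_*'/u_*\to0$, $y\psi_*'/\psi_*\to0$, together with Lemmas \ref{obr} and \ref{log}) is used throughout to pass freely between $u_*(2^t)$, $u_*(\log n)$ and their analogues without changing the order.

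The main obstacle I anticipate is the bookkeeping in the two-parameter optimization: one must choose, for each layer $t$, both the number $n_t$ of allotted entropy "coordinates" and the refinement parameter $m=m(t)$ in Assumption \ref{sup2}, subject to $\sum_t n_t\lesssim n$, and then verify that the doubly exponential growth in (\ref{nu_t_k1}) really does convert all $\overline\nu_t$-dependence into polynomial-in-$\log n$ factors with the exact exponents $-\lambda_*-\frac1q+\frac1p$ (resp. none). Getting the constants and the case division at $\lambda_*=\frac1p-\frac1q$ right — and checking that the argument of $u_*$ is genuinely $\log n$ in the first regime and $n$ in the second — is where the care is needed; the rest is a routine adaptation of the widths argument in \cite{vas_width_raspr} to entropy numbers via Theorem \ref{shutt_trm} and Proposition \ref{lifs_sta}.
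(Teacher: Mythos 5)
Your plan reproduces the paper's own strategy: decompose $f-\hat Pf$ in layers via the telescoping $(Q_{t+1}-Q_t)$, $(f-Q_t)\chi_{G_t}$, and tail terms of (\ref{f_q_t})--(\ref{f_p_t_m}), estimate each layer using Sch\"utt's Theorem \ref{shutt_trm} on the discretized blocks from Assumption \ref{sup2} and Theorem \ref{lifs_sta} together with the tree-partition Lemma \ref{lemma_o_razb_dereva1} to pay for the encoding cost, then optimize the allocation of entropy indices over the two ranges $t<t_*(n)$ and $t_*(n)\le t<t_{**}(n)$ so that the sum (\ref{eklspt}) balances. The two-scale structure you flag as a possible pitfall is exactly what the paper uses: $2^{t_*(n)}\asymp\log n$ while $2^{t_{**}(n)}\asymp n$ (Proposition \ref{nu_t_st}), which is why $u_*(\log n)$ appears for $\lambda_*>\frac1p-\frac1q$ and $u_*(n)$ for $\lambda_*<\frac1p-\frac1q$; your identification of the dominant layer in each regime and of the resulting powers of $n$ and $\log n$ is correct.
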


First we prove some auxiliary assertions.

{\bf Definition of operators $Q_t$.} By assumption \ref{sup1} and
(\ref{w_s_2}), for any $t\ge t_0$, $i\in \hat
J_t\stackrel{(\ref{ovrl_it_eq_hat_it})}{=}\overline{J}_t$ there
exists a linear continuous operator $\tilde P_{t,i}:
Y_q(\Omega)\rightarrow {\cal P}(\Omega)$ such that for any
function $f\in \hat X_p(\Omega)$ and for any subtree ${\cal
A}'\subset {\cal A}$ rooted at $\hat \xi_{t,i}$
\begin{align}
\label{ftp_ti_mod} \|f-\tilde P_{t,i}f\|_{Y_q(\Omega_{{\cal A}'})}
\underset{\mathfrak{Z}_0}{\lesssim}
2^{-\lambda_*k_*t}u_*(2^{k_*t})\|f\| _{X_p(\Omega_{{\cal A}'})};
\end{align}
in particular,
\begin{align}
\label{ftp_ti} \|f-\tilde P_{t,i}f\|_{Y_q(\tilde U_{t,i})}
\underset{\mathfrak{Z}_0}{\lesssim}
2^{-\lambda_*k_*t}u_*(2^{k_*t})\|f\| _{X_p(\tilde U_{t,i})}.
\end{align}
Moreover, from the definitions of the space $\hat X_p(\Omega)$ and
of the operator $\hat P$ it follows that we can set $\tilde
P_{t_0,i_0}=0$. Let $1\le t< t_0$. Then $\overline{J}_t=\{i_0\}$,
$\tilde U_{t,i_0}=\tilde U_{t_0,i_0}$. We set $\tilde
P_{t,i_0}=0$. By (\ref{2l}) we get that (\ref{ftp_ti}) holds.

Let
\begin{align}
\label{qtf_x} Q_tf(x)=\tilde P_{t,i}f(x) \quad \text{for} \quad
x\in \tilde U_{t,i}, \quad i\in \overline{J}_t, \quad
Q_tf(x)=0\quad \text{for} \quad x\in \Omega \backslash \tilde U_t,
\end{align}
\begin{align}
\label{tt_def} T_t=\{\tilde U_{t+1,i}\}_{i\in \overline{J}_{t+1}}.
\end{align}
Then $(Q_{t+1}f-Q_tf)\chi_{\tilde U_{t+1}}\in {\cal
S}_{T_t}(\Omega)$, and for $p\le q$ we have
\begin{align}
\label{pp3_11} \|f-Q_tf\| _{Y_q(\tilde U_t)}\le \|f-Q_tf\|
_{Y_{p,q,T_{t-1}}(\tilde U_t)}
\stackrel{(\ref{ftp_ti})}{\underset{\mathfrak{Z}_0} {\lesssim}}
2^{-\lambda_*k_*t}u_*(2^{k_*t}).
\end{align}
Notice that if $t<t_0$, then $Q_tf=Q_{t+1}f=0$ (since $\tilde
P_{t,i_0}=0$ for $t\le t_0$ by definition).

{\bf Definition of operators $P_{t,m}$.} For $t\ge t_0$ we set
\begin{align}
\label{mt_def} m_t=\lceil \log \nu_t\rceil.
\end{align}
In \cite{vas_width_raspr} for each $m\in \Z_+$ the set $G_{m,t}
\subset G_t$, the partition $\tilde T_{t,m}$ of $G_{m,t}$ and the
linear continuous operator $P_{t,m}:Y_q(\Omega) \rightarrow {\cal
S}_{\tilde T_{t,m}}(\Omega)$ were constructed with the following
properties:
\begin{enumerate}
\item $G_{m,t}\subset G_{m+1,t}$, $G_{m_t,t}=G_t$;
\item for any $m\in \Z_+$
\begin{align}
\label{cttm} {\rm card}\, \tilde T_{t,m}
\underset{\mathfrak{Z}_0}{\lesssim} 2^m;
\end{align}
\item for any function $f\in \hat X_p(\Omega)$ and for any set
$E\in \tilde T_{t,m}$
\begin{align}
\label{fpttm1} \|f-P_{t,m}f\|_{Y_q(E)}
\underset{\mathfrak{Z}_0}{\lesssim} 2^{-\lambda_*k_*t}
u_*(2^{k_*t})\|f\|_{X_p(E)}, \quad m\le m_t,
\end{align}
\begin{align}
\label{fpttm2} \|f-P_{t,m}f\|_{Y_q(E)}
\underset{\mathfrak{Z}_0}{\lesssim} 2^{-\mu_*k_*t}
u_*(2^{k_*t})\cdot 2^{-\delta_*(m-m_t)}\|f\|_{X_p(E)}, \quad m>
m_t;
\end{align}
\item for any set $E\in \tilde T_{t,m}$
\begin{align}
\label{card_et} {\rm card}\, \{E'\in \tilde T_{t,m\pm 1}:\ {\rm
mes} (E\cap E')>0\} \underset{\mathfrak{Z}_0}{\lesssim} 1.
\end{align}
\end{enumerate}
Moreover, we may assume that $\tilde T_{m_t,t}=\{\hat
F(\xi)\}_{\xi \in {\bf V}(\Gamma_t)}$.

\begin{Rem}
In \cite{vas_width_raspr} the relation (\ref{fpttm2}) was proved
with $\lambda_*$ instead of $\mu_*$; however it follows from the
construction and from assumption \ref{sup2} that the estimate with
$\mu_*$ is also correct (see \cite[proof of formula
(80)]{vas_width_raspr}).
\end{Rem}

\smallskip

Let
\begin{align}
\label{t_st_n} t_*(n)=\min \{t\in \N:\; \overline{\nu}_t\ge n\},
\end{align}
\begin{align}
\label{t_st_st_n} t_{**}(n)=\left\{ \begin{array}{l} t_*(n), \quad
\text{if} \quad p\ge q, \\ \min \{t\in \N:\; \overline{\nu}_t\ge
2^n\}, \quad \text{if} \quad p<q.\end{array}\right.
\end{align}

The following lemma is proved in \cite[formula (60)]{vas_bes}.
\begin{Lem}
\label{sum_lem} Let $\Lambda_*:(0, \, +\infty) \rightarrow (0, \,
+\infty)$ be an absolutely continuous function such that $\lim
\limits_{y\to +\infty}\frac{y\Lambda _*'(y)} {\Lambda _*(y)}=0$.
Then for any  $\varepsilon >0$
\begin{align}
\label{sum_l_est} t^{-\varepsilon}
\underset{\varepsilon,\Lambda_*}{\lesssim}
\frac{\Lambda_*(ty)}{\Lambda_*(y)}\underset{\varepsilon,
\Lambda_*}{\lesssim} t^\varepsilon,\quad 1\le y<\infty, \;\; 1\le
t<\infty.
\end{align}
\end{Lem}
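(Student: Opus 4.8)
The plan is to reduce the estimate to a bound on the logarithmic derivative $\Lambda_*'/\Lambda_*$. Since $\Lambda_*$ is absolutely continuous, continuous and strictly positive, on every compact subinterval $[a,b]\subset(0,+\infty)$ it is bounded below by a positive constant, so $\log\Lambda_*$ is there a composition of a Lipschitz function with an absolutely continuous one, hence absolutely continuous, with $(\log\Lambda_*)'=\Lambda_*'/\Lambda_*$ a.e. Consequently, for $1\le y<\infty$, $1\le t<\infty$ one may write $\log\frac{\Lambda_*(ty)}{\Lambda_*(y)}=\int_y^{ty}\frac{\Lambda_*'(s)}{\Lambda_*(s)}\,ds=\int_y^{ty}\frac{s\Lambda_*'(s)}{\Lambda_*(s)}\,\frac{ds}{s}$, and the whole problem becomes controlling this last integral.

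Fix $\varepsilon>0$. By the hypothesis $\lim_{y\to+\infty}\frac{y\Lambda_*'(y)}{\Lambda_*(y)}=0$ there is $Y=Y(\varepsilon,\Lambda_*)\ge 1$ such that $\bigl|\frac{s\Lambda_*'(s)}{\Lambda_*(s)}\bigr|\le\varepsilon$ for all $s\ge Y$. If $y\ge Y$, then $[y,ty]\subset[Y,+\infty)$, and the displayed identity immediately gives $\bigl|\log\frac{\Lambda_*(ty)}{\Lambda_*(y)}\bigr|\le\varepsilon\int_y^{ty}\frac{ds}{s}=\varepsilon\log t$, i.e. $t^{-\varepsilon}\le\frac{\Lambda_*(ty)}{\Lambda_*(y)}\le t^{\varepsilon}$, which is even sharper than claimed.

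For the remaining case $1\le y<Y$ I would exploit compactness: $\Lambda_*$ attains on $[1,Y]$ a minimum $m>0$ and a maximum $M<\infty$, both depending only on $\varepsilon$ and $\Lambda_*$. If $ty\le Y$ then $y,ty\in[1,Y]$, so $\frac mM\le\frac{\Lambda_*(ty)}{\Lambda_*(y)}\le\frac Mm$; since $t\ge1$ gives $t^{-\varepsilon}\le1\le t^{\varepsilon}$, this yields the two-sided bound with constant $M/m$. If instead $y<Y<ty$, I would split $\log\frac{\Lambda_*(ty)}{\Lambda_*(y)}=\log\frac{\Lambda_*(ty)}{\Lambda_*(Y)}+\log\frac{\Lambda_*(Y)}{\Lambda_*(y)}$; the second summand is bounded in absolute value by $\log(M/m)$, while for the first one applies the already treated case with base point $Y$ and ratio $ty/Y$, noting $ty/Y\le ty/y=t$ because $y\le Y$, to get $\bigl|\log\frac{\Lambda_*(ty)}{\Lambda_*(Y)}\bigr|\le\varepsilon\log(ty/Y)\le\varepsilon\log t$. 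Hence $\bigl|\log\frac{\Lambda_*(ty)}{\Lambda_*(y)}\bigr|\le\varepsilon\log t+\log(M/m)$, so $\frac mM t^{-\varepsilon}\le\frac{\Lambda_*(ty)}{\Lambda_*(y)}\le\frac Mm t^{\varepsilon}$. Collecting the three cases and taking $c=\max\{1,M/m\}=M/m$ finishes the proof.

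The routine verifications are the absolute continuity of $\log\Lambda_*$ on compact subintervals (legitimizing the fundamental theorem of calculus for the logarithmic derivative) and the extraction of the threshold $Y$ from the limit hypothesis; the only mildly delicate point is the gluing in the transitional range $y<Y\le ty$, where the near-origin behavior, controlled by compactness and continuity, must be matched with the behavior at infinity, controlled by the smallness of $s\Lambda_*'(s)/\Lambda_*(s)$. I do not anticipate any serious obstacle: this is the standard Potter-type two-sided bound for functions whose logarithmic derivative times argument tends to zero at infinity.
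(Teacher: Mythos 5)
Your proof is correct. Note that the paper does not prove this lemma internally at all — it simply cites \cite[formula (60)]{vas_bes} — and your argument (writing $\log\frac{\Lambda_*(ty)}{\Lambda_*(y)}$ as the integral of the logarithmic derivative $\Lambda_*'/\Lambda_*$ over $[y,ty]$, using the smallness of $s\Lambda_*'(s)/\Lambda_*(s)$ beyond a threshold $Y$, and handling $1\le y<Y$ by the positive minimum and finite maximum of $\Lambda_*$ on $[1,Y]$ with a split at $Y$ in the transitional range) is exactly the standard Potter-type argument behind that reference. The only cosmetic point: since $\Lambda_*'$ exists only almost everywhere, choose $Y$ so that $\bigl|\frac{s\Lambda_*'(s)}{\Lambda_*(s)}\bigr|\le\varepsilon$ for almost every $s\ge Y$; this is all the integral estimate requires.
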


\begin{Sta}
\label{nu_t_st} If (\ref{nu_t_k}) holds, then
\begin{align}
\label{nu_t_est1} \overline{\nu}_{t_*(n)}
\underset{\mathfrak{Z}}{\asymp} n, \quad 2^{k_*t_*(n)}
\underset{\mathfrak{Z}}{\asymp} n^{\beta_*} \varphi_*(n),
\end{align}
and in the case $p<q$ we have
\begin{align}
\label{nu_t_est1_1} \overline{\nu}_{t_{**}(n)}
\underset{\mathfrak{Z}}{\asymp} 2^n, \quad 2^{k_*t_{**}(n)}
\underset{\mathfrak{Z}}{\asymp} 2^{\beta_*n} \varphi_*(2^n).
\end{align}
If (\ref{nu_t_k1}) holds, then for sufficiently large $n\in \N$
\begin{align}
\label{nu_t_est2} 2^{t_*(n)} \underset{\mathfrak{Z}}{\asymp} \log
n; \quad \text{if} \quad p<q, \quad \text{then} \quad
2^{t_{**}(n)} \underset{\mathfrak{Z}}{\asymp} n.
\end{align}
\end{Sta}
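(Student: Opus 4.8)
\noindent{\bf Proof.} The plan is to reduce each of the relations (\ref{nu_t_est1}), (\ref{nu_t_est1_1}), (\ref{nu_t_est2}) to the elementary asymptotics of the explicit function $\phi(y):=y^{\gamma_*}\psi_*(y)$, controlling the slowly varying factors $\psi_*$, $\varphi_*$ via Lemma~\ref{sum_lem} and inverting $\phi$ via Lemma~\ref{obr}.

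First I would treat the case (\ref{nu_t_k}), in which $\overline{\nu}_t=2^{\gamma_*k_*t}\psi_*(2^{k_*t})=\phi(2^{k_*t})$. Since the ratio of consecutive arguments $2^{k_*(t+1)}/2^{k_*t}=2^{k_*}$ is a fixed constant, Lemma~\ref{sum_lem} applied to $\Lambda_*=\psi_*$ gives $\psi_*(2^{k_*(t+1)})\underset{\mathfrak{Z}_0}{\asymp}\psi_*(2^{k_*t})$, hence $\overline{\nu}_{t+1}\underset{\mathfrak{Z}_0}{\asymp}\overline{\nu}_t$; and, picking $\varepsilon<\gamma_*$ in Lemma~\ref{sum_lem}, $\overline{\nu}_t\underset{\mathfrak{Z}_0}{\gtrsim}2^{(\gamma_*-\varepsilon)k_*t}\to\infty$. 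Thus $t_*(n)$ is well defined, $t_*(n)\to\infty$ as $n\to\infty$, and, by the minimality in (\ref{t_st_n}), for large $n$ one has $n\le\overline{\nu}_{t_*(n)}\underset{\mathfrak{Z}_0}{\lesssim}\overline{\nu}_{t_*(n)-1}<n$, which is the first relation in (\ref{nu_t_est1}). For the second, put $s:=2^{k_*t_*(n)}$, so that $\phi(s)=\overline{\nu}_{t_*(n)}\underset{\mathfrak{Z}_0}{\asymp}n$. Because $\gamma_*>0$ and $y\psi_*'(y)/\psi_*(y)\to0$, we have $\phi'(y)=y^{\gamma_*-1}\bigl(\gamma_*\psi_*(y)+y\psi_*'(y)\bigr)>0$ for all large $y$, so $\phi$ is eventually strictly increasing and, by Lemma~\ref{obr}, its inverse is $y(x)=x^{\beta_*}\varphi_*(x)$. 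Applying this increasing inverse to $c_0n\le\overline{\nu}_{t_*(n)}\le C_0n$ gives $y(c_0n)\le s\le y(C_0n)$, and since $(cn)^{\beta_*}\asymp n^{\beta_*}$ while $\varphi_*(cn)\underset{\mathfrak{Z}_0}{\asymp}\varphi_*(n)$ by Lemma~\ref{sum_lem} (now with $\Lambda_*=\varphi_*$), both sides are $\underset{\mathfrak{Z}_0}{\asymp}n^{\beta_*}\varphi_*(n)$; this is the second relation in (\ref{nu_t_est1}). The relations (\ref{nu_t_est1_1}) for $p<q$ then follow by repeating the argument with $n$ replaced by $2^n$ and $t_*$ by $t_{**}$ throughout.

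Next I would treat the case (\ref{nu_t_k1}), where $k_*=1$ and $\overline{\nu}_t=2^{\gamma_*2^t}\psi_*(2^{2^t})$; here I pass to logarithms, $\log\overline{\nu}_t=\gamma_*2^t+\log\psi_*(2^{2^t})$. By Lemma~\ref{sum_lem} with $y=1$, for every $\varepsilon>0$ one has $z^{-\varepsilon}\underset{\varepsilon,\psi_*}{\lesssim}\psi_*(z)\underset{\varepsilon,\psi_*}{\lesssim}z^{\varepsilon}$ for $z\ge1$, so $|\log\psi_*(2^{2^t})|\le\varepsilon2^t+C_{\varepsilon}$ for $t$ large; choosing $\varepsilon=\gamma_*/2$ gives, for large $t$,
$$\frac{\gamma_*}{2}\,2^t-C\le\log\overline{\nu}_t\le\frac{3\gamma_*}{2}\,2^t+C,$$
so that $\overline{\nu}_t\to\infty$, $t_*(n)\to\infty$ and $\log\overline{\nu}_t\underset{\mathfrak{Z}_0}{\asymp}2^t$. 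Since $t_*(n)=\min\{t:\log\overline{\nu}_t\ge\log n\}$, the defining inequality at $t_*(n)$ together with the strict inequality at $t_*(n)-1$ yields $\frac{3\gamma_*}{2}2^{t_*(n)}+C\ge\log n$ and $\frac{\gamma_*}{2}2^{t_*(n)-1}-C<\log n$, whence $2^{t_*(n)}\underset{\mathfrak{Z}_0}{\asymp}\log n$ for $n$ large. When $p<q$ one has instead $t_{**}(n)=\min\{t:\log\overline{\nu}_t\ge n\}$ by (\ref{t_st_st_n}), and the same two-sided bound gives $2^{t_{**}(n)}\underset{\mathfrak{Z}_0}{\asymp}n$. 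This proves (\ref{nu_t_est2}).

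Everything here is routine computation; the only step demanding some care is the inversion in the case (\ref{nu_t_k}): one has to check first that $\phi$ is genuinely monotone near infinity before invoking Lemma~\ref{obr}, and then verify that the slowly varying factor $\varphi_*$ absorbs the multiplicative $\asymp$-constants (via Lemma~\ref{sum_lem}), so that the crude estimate $\overline{\nu}_{t_*(n)}\asymp n$ can be upgraded to $2^{k_*t_*(n)}\asymp n^{\beta_*}\varphi_*(n)$ with no loss in the power or in the slowly varying correction.
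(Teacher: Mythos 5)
Your proof is correct. For the relations (\ref{nu_t_est2}) your argument is essentially the paper's own: take logarithms, use Lemma \ref{sum_lem} to absorb $\psi_*(2^{2^t})$ into a factor $2^{\pm\varepsilon 2^t}$, and play the defining inequality of $t_*(n)$ (resp. $t_{**}(n)$) at $t_*(n)$ against the strict inequality at $t_*(n)-1$; the paper does exactly this with $\varepsilon$ fixed so that the exponent doubles or halves. The difference lies in (\ref{nu_t_est1}) and (\ref{nu_t_est1_1}): the paper simply cites formula (51) of \cite{vas_width_raspr} and says the $2^n$-variant is proved similarly, whereas you give a self-contained derivation — consecutive comparability $\overline{\nu}_{t+1}\underset{\mathfrak{Z}_0}{\asymp}\overline{\nu}_t$ from Lemma \ref{sum_lem} yields $\overline{\nu}_{t_*(n)}\underset{\mathfrak{Z}_0}{\asymp}n$, and then inversion of $y\mapsto y^{\gamma_*}\psi_*(y)$ via Lemma \ref{obr} (after checking eventual monotonicity) together with $\varphi_*(cn)\underset{\mathfrak{Z}_0}{\asymp}\varphi_*(n)$ gives $2^{k_*t_*(n)}\underset{\mathfrak{Z}_0}{\asymp}n^{\beta_*}\varphi_*(n)$. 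This buys independence from the external reference at the cost of a little extra care (the monotonicity of $\phi$ near infinity, which you rightly flag, holds a.e. from the hypothesis $y\psi_*'(y)/\psi_*(y)\to 0$ and upgrades to genuine monotonicity by absolute continuity), and it is in substance the argument behind the cited formula, so nothing is lost in the constants or in the slowly varying correction.
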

\begin{proof}
Estimates (\ref{nu_t_est1}) follow from \cite[formula
(51)]{vas_width_raspr}. The relations (\ref{nu_t_est1_1}) can be
proved similarly.

Let us prove the first estimate in (\ref{nu_t_est2}); the second
relation is proved similarly. We have
$$
n\stackrel{(\ref{nu_t_k1}), (\ref{t_st_n})}{\le} 2^{\gamma_*\cdot
2^{t_*(n)}} \psi_*(2^{2^{t_*(n)}})
\stackrel{(\ref{sum_l_est})}{\underset{\mathfrak{Z}_0}{\lesssim}}
2^{2\gamma_*\cdot 2^{t_*(n)}};
$$
therefore, $2^{t_*(n)} \underset{\mathfrak{Z}_0}{\gtrsim} \log n$
for sufficiently large $n\in \N$. Further,
$$
n \stackrel{(\ref{nu_t_k1}), (\ref{t_st_n})}{>} 2^{\gamma_*\cdot
2^{t_*(n)-1}} \psi_*(2^{2^{t_*(n)-1}})
\stackrel{(\ref{sum_l_est})}{\underset{\mathfrak{Z}_0}{\gtrsim}}
2^{\gamma_*\cdot 2^{t_*(n)-2}},
$$
and $2^{t_*(n)} \underset{\mathfrak{Z}_0}{\lesssim} \log n$ for
sufficiently large $n\in \N$.
\end{proof}

It is proved in \cite[p. 37--39]{vas_width_raspr} that for $f\in
\hat X_p(\Omega)$
\begin{align}
\label{f_q_t} \begin{array}{c} f= \sum \limits _{j=t_0}
^{t_{**}(n)-1} (Q_{j+1}f-Q_jf)\chi _{\tilde U_{j+1}} +\sum \limits
_{j=t_0} ^{t_{**}(n)-1} (f-Q_jf) \chi _{G_j}
+(f-Q_{t_{**}(n)}f)\chi _{\tilde U_{t_{**}(n)}},
\end{array}
\end{align}
\begin{align}
\label{f_p_t_m} \begin{array}{c} \sum \limits
_{t=t_0}^{t_{**}(n)-1} (f-Q_tf)\chi_{G_t} =\sum \limits
_{t=t_0}^{t_*(n)-1} \sum \limits_{m=0}^\infty
(P_{t,m+1}f-P_{t,m}f)\chi _{G_{m,t}} +\\+\sum \limits
_{t=t_*(n)}^{t_{**}(n)-1} \sum \limits _{m=m_t}^\infty
(P_{t,m+1}f-P_{t,m}f)\chi _{G_t} + \sum \limits
_{t=t_*(n)}^{t_{**}(n)-1} (P_{t,m_t}f-Q_tf)\chi _{G_t},
\end{array}
\end{align}
\begin{align}
\label{f_p_t_m1} \begin{array}{c} \sum \limits
_{t=t_0}^{t_{**}(n)-1} (f-Q_tf)\chi_{G_t} =\sum \limits
_{t=t_0}^{t_*(n)-1} \sum \limits_{m=m_t+1}^\infty
(P_{t,m+1}f-P_{t,m}f)\chi _{G_{m,t}} +\\+\sum \limits
_{t=t_0}^{t_{*}(n)-1} (P_{t,m_t+1}f-Q_tf)\chi _{G_t} +\sum \limits
_{t=t_*(n)}^{t_{**}(n)-1} \sum \limits _{m=m_t}^\infty
(P_{t,m+1}f-P_{t,m}f)\chi _{G_t} + \\ +\sum \limits
_{t=t_*(n)}^{t_{**}(n)-1} (P_{t,m_t}f-Q_tf)\chi _{G_t}.
\end{array}
\end{align}

\begin{Lem}
\label{qt_st_n}
\begin{enumerate}
\item Let (\ref{nu_t_k}) hold. Then for any $f\in B\hat X_p(\Omega)$
\begin{align}
\label{est_case1_pgq} \|f-Q_{t_{**}(n)}f\|_{Y_q(\tilde
U_{t_{**}(n)})} \underset{\mathfrak{Z}_0}{\lesssim}
n^{-\mu_*\beta_*+\frac 1q-\frac 1p} \varphi_*^{-\mu_*}(n)
u_*(n^{\beta_*}\varphi_*(n)) \quad \text{if} \quad p>q,
\end{align}
\begin{align}
\label{est_case1_peq} \|f-Q_{t_{**}(n)}f\|_{Y_q(\tilde
U_{t_{**}(n)})} \underset{\mathfrak{Z}_0}{\lesssim}
n^{-\lambda_*\beta_*} \varphi_*^{-\lambda_*}(n)
u_*(n^{\beta_*}\varphi_*(n)) \quad \text{if} \quad p=q,
\end{align}
\begin{align}
\label{est_case1_plq} \|f-Q_{t_{**}(n)}f\|_{Y_q(\tilde
U_{t_{**}(n)})} \underset{\mathfrak{Z}_0}{\lesssim}
2^{-\lambda_*\beta_*n} \varphi_*^{-\lambda_*}(2^{n})
u_*(2^{\beta_*n}\varphi_*(2^{n})) \quad \text{if} \quad p<q.
\end{align}
\item Let (\ref{nu_t_k1}) hold (by Remark \ref{1511}, this case is possible only for $p\le q$).
Then for $f\in B\hat X_p(\Omega)$
\begin{align}
\label{est_case2_peq} \|f-Q_{t_{**}(n)}f\|_{Y_q(\tilde
U_{t_{**}(n)})} \underset{\mathfrak{Z}_0}{\lesssim} (\log
n)^{-\lambda_*}u_*(\log n)\quad \text{if} \quad p=q,
\end{align}
\begin{align}
\label{est_case2_plq} \|f-Q_{t_{**}(n)}f\|_{Y_q(\tilde
U_{t_{**}(n)})} \underset{\mathfrak{Z}_0}{\lesssim} n^{-\lambda_*}
u_*(n)\quad \text{if} \quad p<q.
\end{align}
\end{enumerate}
\end{Lem}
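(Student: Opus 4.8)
The plan is to bound $\|f - Q_{t_{**}(n)}f\|_{Y_q(\tilde U_{t_{**}(n)})}$ directly from the decomposition of this quantity over the subtrees $\tilde{\cal A}_{t_{**}(n),i}$, $i \in \overline{J}_{t_{**}(n)}$. On each such subtree the operator $Q_{t_{**}(n)}$ restricts to $\tilde P_{t_{**}(n),i}$, so by property (\ref{f_yq}) of the norm $\|\cdot\|_{Y_q}$ and by (\ref{ftp_ti_mod}) applied to the subtree $\tilde{\cal A}_{t_{**}(n),i}$ rooted at $\hat\xi_{t_{**}(n),i}$, we get
$$
\|f - Q_{t_{**}(n)}f\|_{Y_q(\tilde U_{t_{**}(n)})} = \left\| \bigl\{\|f - \tilde P_{t_{**}(n),i}f\|_{Y_q(\tilde U_{t_{**}(n),i})}\bigr\}_i \right\|_{l_q} \underset{\mathfrak{Z}_0}{\lesssim} 2^{-\lambda_* k_* t_{**}(n)} u_*(2^{k_* t_{**}(n)}) \left\| \bigl\{\|f\|_{X_p(\tilde U_{t_{**}(n),i})}\bigr\}_i \right\|_{l_q}.
$$
First I would handle the case $p \le q$: then $\|\cdot\|_{l_q} \le \|\cdot\|_{l_p}$ on the vector of seminorms, and by (\ref{f_xp}) the $l_p$-norm equals $\|f\|_{X_p(\tilde U_{t_{**}(n)})} \le \|f\|_{X_p(\Omega)} \le 1$ since $f \in B\hat X_p(\Omega)$. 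So in this case the right-hand side is $\underset{\mathfrak{Z}_0}{\lesssim} 2^{-\lambda_* k_* t_{**}(n)} u_*(2^{k_* t_{**}(n)})$, and it remains to substitute the asymptotics of $2^{k_* t_{**}(n)}$ from Proposition \ref{nu_t_st}: under (\ref{nu_t_k}) with $p=q$ this is $n^{\beta_*}\varphi_*(n)$ by (\ref{nu_t_est1}), giving (\ref{est_case1_peq}) after writing $(n^{\beta_*}\varphi_*(n))^{-\lambda_*} = n^{-\lambda_*\beta_*}\varphi_*^{-\lambda_*}(n)$ and invoking Lemma \ref{sum_lem} to absorb $\varphi_*$-powers inside $u_*$ up to constants; under (\ref{nu_t_k}) with $p<q$, use (\ref{nu_t_est1_1}) with $2^n$ in place of $n$ to get (\ref{est_case1_plq}); under (\ref{nu_t_k1}) use (\ref{nu_t_est2}), namely $2^{k_* t_*(n)} = 2^{t_*(n)} \asymp \log n$ for $p=q$ and $2^{t_{**}(n)} \asymp n$ for $p<q$, yielding (\ref{est_case2_peq}) and (\ref{est_case2_plq}).

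The case $p > q$ is the main obstacle, because there $\|\cdot\|_{l_q} \ge \|\cdot\|_{l_p}$ and one cannot simply bound the $l_q$-norm of the seminorm vector by $\|f\|_{X_p} \le 1$; one pays a factor depending on the number of summands. Here I would apply Hölder's inequality with exponent $\frac{p}{q} > 1$ to the $l_q$-sum: writing $N = {\rm card}\,\overline{J}_{t_{**}(n)} = {\rm card}\,\hat J_{t_{**}(n)}$ (using (\ref{ovrl_it_eq_hat_it})), we get $\bigl(\sum_i \|f\|_{X_p(\tilde U_{t_{**}(n),i})}^q\bigr)^{1/q} \le N^{\frac1q - \frac1p} \bigl(\sum_i \|f\|_{X_p(\tilde U_{t_{**}(n),i})}^p\bigr)^{1/p} \le N^{\frac1q - \frac1p}\|f\|_{X_p(\Omega)} \le N^{\frac1q-\frac1p}$. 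Now the subtle point is that $N$ is the number of connected components of $\tilde\Gamma_{t_{**}(n)}$, not of $\Gamma_{t_{**}(n)}$; but by (\ref{v_min_gt}) these are the same, equal to ${\rm card}\,\hat J_{t_{**}(n)}$. Then I invoke hypothesis (\ref{bipf4684gn}) of Assumption \ref{sup3}, which for $p>q$ gives exactly $2^{-\lambda_* k_* t}({\rm card}\,\hat J_t)^{\frac1q-\frac1p} \underset{\mathfrak{Z}_0}{\lesssim} 2^{-\mu_* k_* t}\overline\nu_t^{\frac1q-\frac1p}$; combined with the $u_*$-factor this turns the bound into $\underset{\mathfrak{Z}_0}{\lesssim} 2^{-\mu_* k_* t_{**}(n)} u_*(2^{k_* t_{**}(n)}) \overline\nu_{t_{**}(n)}^{\frac1q-\frac1p}$. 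Since $p>q$ forces (\ref{nu_t_k}) by Remark \ref{1511} and hence $t_{**}(n) = t_*(n)$ by (\ref{t_st_st_n}), I substitute $\overline\nu_{t_*(n)} \asymp n$ and $2^{k_* t_*(n)} \asymp n^{\beta_*}\varphi_*(n)$ from (\ref{nu_t_est1}), getting $\underset{\mathfrak{Z}_0}{\lesssim} n^{-\mu_*\beta_*+\frac1q-\frac1p}\varphi_*^{-\mu_*}(n) u_*(n^{\beta_*}\varphi_*(n))$, which is (\ref{est_case1_pgq}), again using Lemma \ref{sum_lem} to move powers of $\varphi_*$ across $u_*$.

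One detail I would be careful about: when $t_{**}(n) < t_0$ the operator $Q_{t_{**}(n)}$ is identically zero and $\tilde U_{t_{**}(n)} = \tilde U_{t_0,i_0}$, so the estimate reduces (via (\ref{ftp_ti}) with $\tilde P_{t,i_0}=0$ and (\ref{2l})) to the bound with $t=t_0$; but since the statement concerns large $n$, $t_{**}(n) \ge t_0$ holds automatically and this degenerate case does not arise. Another point worth checking is that all implied constants genuinely depend only on $\mathfrak{Z}_0$ — the constants in (\ref{ftp_ti_mod}), in the application of (\ref{bipf4684gn}), and in Proposition \ref{nu_t_st} are all of this type, and Lemma \ref{sum_lem}'s constants depend on $\varepsilon$ and the slowly varying function, both determined by $\mathfrak{Z}_0$; so the final estimates carry the correct dependence. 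With these substitutions the six displayed inequalities follow in a few lines each.
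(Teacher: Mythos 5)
Your proposal is correct and follows the same underlying approach as the paper: decompose over the connected components of $\tilde\Gamma_{t_{**}(n)}$, apply (\ref{ftp_ti}) on each, aggregate, and substitute the asymptotics from Proposition \ref{nu_t_st}. The only place you supply genuinely extra detail is the case $p>q$: the paper's proof cites only (\ref{pp3_11}), which is stated for $p\le q$, and for $p>q$ one actually needs the estimate (\ref{fqtf}) (quoted later in the proof of Lemma~\ref{sum_qt_est} from the companion paper); your H\"older-plus-(\ref{bipf4684gn}) argument is precisely a self-contained derivation of that estimate, and it works because ${\rm card}\,\overline{J}_{t_{**}(n)}={\rm card}\,\hat J_{t_{**}(n)}$ by (\ref{ovrl_it_eq_hat_it}), $p>q$ forces (\ref{nu_t_k}) and hence $t_{**}(n)=t_*(n)$, and (\ref{bipf4684gn}) upgrades $\lambda_*$ to $\mu_*$ at the cost of replacing ${\rm card}\,\hat J_t$ by $\overline\nu_t$. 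So your proof is right, and arguably fills a small gap in the paper's one-line citation for assertion~1 in the regime $p>q$.
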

\begin{proof}
Estimates from assertion 1 follow from (\ref{pp3_11}),
(\ref{sum_l_est}), (\ref{nu_t_est1}), (\ref{nu_t_est1_1}).
Estimates from assertion 2 follow from (\ref{pp3_11}),
(\ref{sum_l_est}) and (\ref{nu_t_est2}).
\end{proof}

Recall the notation $\sigma_{p,q}=\min\{p, \, q\}$.
\begin{Lem}
\label{oper_a} {\rm (see \cite{vas_width_raspr}).} Let $T$ be a
finite partition of a measurable subset $G\subset \Omega$,
$\nu=\dim {\cal S}_T(\Omega)$ (see (\ref{st_omega})). Then there
exists a linear isomorphism $A:{\cal S}_T(\Omega)\rightarrow
\R^\nu$ such that $\|A\|_{Y_{p,q,T}(G)\rightarrow
l_{\sigma_{p,q}}^\nu}\underset{\sigma_{p,q}, \, r_0}{\lesssim} 1$,
$\|A^{-1}\| _{l_q^\nu\rightarrow Y_q(G)} \underset{q, \,
r_0}{\lesssim} 1$.
\end{Lem}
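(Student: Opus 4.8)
The plan is to split $\mathcal{S}_T(\Omega)$ into the direct sum of the local polynomial spaces and to coordinatize each summand by a basis that is uniformly well conditioned. Write $T=\{E_j\}_{j=1}^n$. Since the $E_j$ are pairwise disjoint (modulo sets of measure zero) and every $f\in\mathcal{S}_T(\Omega)$ vanishes off $G=\sqcup_j E_j$, such an $f$ is uniquely determined by the restrictions $f|_{E_j}\in\mathcal{P}(E_j)$, which may moreover be prescribed independently; hence $\mathcal{S}_T(\Omega)=\bigoplus_{j=1}^n\mathcal{P}(E_j)$ and $\nu=\sum_{j=1}^n m_j$, where $m_j:=\dim\mathcal{P}(E_j)\le r_0$ because $\mathcal{P}(E_j)$ is a quotient of $\mathcal{P}(\Omega)$ (pieces with $\mathrm{mes}\,E_j=0$ give $m_j=0$ by property~3 and are discarded). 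On each $\mathcal{P}(E_j)$, viewed as a finite-dimensional subspace of $Y_q(E_j)$, I would apply Auerbach's lemma (equivalently, one could use the L\"owner--John ellipsoid of its unit ball) to fix a basis $e_{j,1},\dots,e_{j,m_j}$ with $\|e_{j,l}\|_{Y_q(E_j)}=1$ together with biorthogonal functionals $e_{j,l}^{\ast}$ of norm $1$ on $\mathcal{P}(E_j)$.

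I would then define $A:\mathcal{S}_T(\Omega)\to\R^\nu$ by $Af=(a_{j,l})_{1\le j\le n,\,1\le l\le m_j}$, where $f|_{E_j}=\sum_{l=1}^{m_j}a_{j,l}e_{j,l}$; this is manifestly a linear isomorphism. For the first bound: $a_{j,l}=e_{j,l}^{\ast}(f|_{E_j})$ and $\|e_{j,l}^{\ast}\|=1$ give $|a_{j,l}|\le\|f|_{E_j}\|_{Y_q(E_j)}$, whence $\bigl(\sum_{l}|a_{j,l}|^{\sigma_{p,q}}\bigr)^{1/\sigma_{p,q}}\le m_j^{1/\sigma_{p,q}}\|f|_{E_j}\|_{Y_q(E_j)}\le r_0^{1/\sigma_{p,q}}\|f|_{E_j}\|_{Y_q(E_j)}$; raising to the power $\sigma_{p,q}$, summing over $j$, and invoking the definition of $\|\cdot\|_{p,q,T}$ yields $\|Af\|_{l_{\sigma_{p,q}}^\nu}\lesssim\|f\|_{Y_{p,q,T}(G)}$ with a constant depending only on $\sigma_{p,q}$ and $r_0$. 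For the second bound: if $f=A^{-1}(a)$, then $\|f|_{E_j}\|_{Y_q(E_j)}\le\sum_{l}|a_{j,l}|\le m_j^{1-1/q}\bigl(\sum_l|a_{j,l}|^q\bigr)^{1/q}\le r_0\bigl(\sum_l|a_{j,l}|^q\bigr)^{1/q}$, and since by property~4 the norm $\|\cdot\|_{Y_q(G)}$ is the $l_q$-aggregation of $\{\|\cdot\|_{Y_q(E_j)}\}_j$, summing in $l_q$ over $j$ gives $\|A^{-1}a\|_{Y_q(G)}\lesssim\|a\|_{l_q^\nu}$ with a constant depending only on $q$ and $r_0$.

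The substance of the lemma is the single observation that the local dimensions $m_j$ are bounded by $r_0$ uniformly in $T$; this is what lets an Auerbach basis produce coordinates equivalent to the $l_q$-norm on each piece with a constant independent of the partition. The remaining points are bookkeeping that I would need to check but do not expect to be obstacles: that $\|\cdot\|_{Y_q(E_j)}$ is a genuine norm on $\mathcal{P}(E_j)$ when $\mathrm{mes}\,E_j>0$ (so that Auerbach's lemma applies; this holds for $Y_q=L_{q,v}$, the case relevant to Theorems~\ref{th1}--\ref{th3}); that every constant coming from Auerbach's lemma and from H\"older's inequality depends only on $r_0$ (via $m_j\le r_0$) and on the exponent, not on $n$ or $T$; and that one aggregates with exponent $\sigma_{p,q}$ for the source norm $\|\cdot\|_{p,q,T}$ but with exponent $q$ for the target norm $\|\cdot\|_{Y_q(G)}$.
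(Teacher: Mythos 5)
Your proof is correct and uses what is surely the intended approach: decompose ${\cal S}_T(\Omega)=\bigoplus_j{\cal P}(E_j)$, pick an Auerbach basis in each finite-dimensional piece $({\cal P}(E_j),\|\cdot\|_{Y_q(E_j)})$, and exploit $m_j\le r_0$ to make the resulting coordinate map uniform over $n$ and $T$. The one point you flag as unverified bookkeeping — that $\|\cdot\|_{Y_q(E_j)}$ is a genuine norm on ${\cal P}(E_j)$ when ${\rm mes}\,E_j>0$ — is in fact forced by the standing assumptions: if $P\in{\cal P}(E_j)$ were nonzero with $\|P\|_{Y_q(E_j)}=0$, then extending $P$ by zero would give a nonzero element of ${\cal S}_T(\Omega)\subset Y_q(\Omega)$ whose $Y_q(\Omega)$-norm vanishes by (\ref{f_yq}), contradicting that $Y_q(\Omega)$ is a normed space.
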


\begin{Lem}
\label{sum_qt_est} There exists a sequence
$\{k_t\}_{t=t_0}^{t_{**}(n)-1}\subset \N$ such that
\begin{align}
\label{slttkt} \sum \limits _{t=t_0}^{t_{**}(n)-1}(k_t-1)
\underset{\mathfrak{Z}_0}{\lesssim} n
\end{align}
and the following assertions hold.
\begin{enumerate}
\item Suppose that (\ref{nu_t_k}) holds. Then for $p>q$
\begin{align}
\label{sum_ekt_1_pgq} \sum \limits _{t=t_0}^{t_{**}(n)-1}
e_{k_t}(Q_{t+1}-Q_t:\hat X_p(\Omega) \rightarrow Y_q(\tilde
U_{t+1}))\underset{\mathfrak{Z}_0}{\lesssim} n^{-\mu_*\beta_*+
\frac 1q-\frac 1p} \varphi_*^{-\mu_*}(n)
u_*(n^{\beta_*}\varphi_*(n)),
\end{align}
and for $p\le q$
\begin{align}
\label{sum_ekt_1_pleq} \sum \limits _{t=t_0}^{t_{**}(n)-1}
e_{k_t}(Q_{t+1}-Q_t:\hat X_p(\Omega) \rightarrow Y_q(\tilde
U_{t+1}))\underset{\mathfrak{Z}_0}{\lesssim} n^{-\lambda_*\beta_*+
\frac 1q-\frac 1p} \varphi_*^{-\lambda_*}(n)
u_*(n^{\beta_*}\varphi_*(n)).
\end{align}
\item Suppose that (\ref{nu_t_k1}) holds (by Remark \ref{1511}, this case is possible only for $p\le
q$). If $p=q$, then
\begin{align}
\label{sum_ekt_2_peq} \sum \limits _{t=t_0}^{t_{**}(n)-1}
e_{k_t}(Q_{t+1}-Q_t:\hat X_p(\Omega) \rightarrow Y_q(\tilde
U_{t+1}))\underset{\mathfrak{Z}_0}{\lesssim} (\log n)^{-\lambda_*}
u_*(\log n);
\end{align}
if $p<q$ and $\lambda_*>\frac 1p-\frac 1q$, then
\begin{align}
\label{sum_ekt_2_plq1} \sum \limits _{t=t_0}^{t_{**}(n)-1}
e_{k_t}(Q_{t+1}-Q_t:\hat X_p(\Omega) \rightarrow Y_q(\tilde
U_{t+1}))\underset{\mathfrak{Z}_0}{\lesssim} n^{\frac 1q-\frac
1p}(\log n)^{-\lambda_*+\frac 1p-\frac 1q} u_*(\log n);
\end{align}
if $p<q$ and $\lambda_*<\frac 1p-\frac 1q$, then
\begin{align}
\label{sum_ekt_2_plq11} \sum \limits _{t=t_0}^{t_{**}(n)-1}
e_{k_t}(Q_{t+1}-Q_t:\hat X_p(\Omega) \rightarrow Y_q(\tilde
U_{t+1}))\underset{\mathfrak{Z}_0}{\lesssim} n^{-\lambda_*}u_*(n).
\end{align}
\end{enumerate}
\end{Lem}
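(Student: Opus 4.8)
The plan is to estimate each $e_{k_t}(Q_{t+1}-Q_t)$ individually and then choose the $k_t$ so that the sum of $(k_t-1)$ is $O(n)$ while the sum of the entropy numbers has the claimed order. The first step is to realize the operator $Q_{t+1}-Q_t$, restricted to $\tilde U_{t+1}$, as a composition involving a finite-dimensional space: since $(Q_{t+1}f-Q_tf)\chi_{\tilde U_{t+1}}\in{\cal S}_{T_t}(\Omega)$ with $T_t$ as in (\ref{tt_def}), and $\dim{\cal S}_{T_t}(\Omega)=r_0\cdot{\rm card}\,\overline{J}_{t+1}\asymp\nu_{t+1}$, I would apply Lemma \ref{oper_a} to get a linear isomorphism $A$ onto $\R^{\nu}$ with $\nu\asymp\nu_{t+1}$ satisfying $\|A\|_{Y_{p,q,T_t}\to l_{\sigma_{p,q}}^\nu}\lesssim1$ and $\|A^{-1}\|_{l_q^\nu\to Y_q}\lesssim1$. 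Then I factor $Q_{t+1}-Q_t$ through $l_{\sigma_{p,q}}^\nu\hookrightarrow l_q^\nu$ (the identity $I_\nu$) and use (\ref{mult_n}) together with the operator norm bound on $Q_{t+1}-Q_t$ into the $Y_{p,q,T_t}$-norm, which by (\ref{pp3_11})-type estimates (splitting via $f-Q_tf$ and $f-Q_{t+1}f$, using (\ref{ftp_ti}) and (\ref{til_w_s_2}) as in the $P_{t,m}$ estimates) is of order $2^{-\lambda_*k_*t}u_*(2^{k_*t})$, or of order $2^{-\mu_*k_*t}u_*(2^{k_*t})$ when $p>q$ together with (\ref{bipf4684gn})/(\ref{2ll}). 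This reduces everything to Schütt's theorem (Theorem \ref{shutt_trm}) for $e_{k_t}(I_\nu:l_{\sigma_{p,q}}^\nu\to l_q^\nu)$.

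The second step is the choice of the $k_t$. For $p>q$ (so $\sigma_{p,q}=q$), $I_\nu:l_q^\nu\to l_q^\nu$ is the identity, so one must take $k_t\asymp\nu_{t+1}$ to make $e_{k_t}$ small — more precisely $k_t=\lceil c\,\nu_{t+1}\rceil$ with $2^{-k_t/\nu}\nu^{1/q-1/p}$ absorbed — and then $\sum_t(k_t-1)\lesssim\sum_{t\le t_{**}(n)}\nu_{t+1}\lesssim n$ because $\overline{\nu}_t$ grows geometrically up to $t_*(n)$ where $\overline{\nu}_{t_*(n)}\asymp n$ (Proposition \ref{nu_t_st}), and the per-term bound times $\nu_{t+1}^{1/q-1/p}$ summed geometrically is dominated by its last term, giving (\ref{sum_ekt_1_pgq}) via (\ref{nu_t_est1}). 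For $p\le q$ one instead exploits that $e_k(I_\nu:l_{\sigma_{p,q}}^\nu\to l_q^\nu)$ already decays polynomially in $k$ for $k$ in the middle range $\log\nu\le k\le\nu$, so one can afford much smaller $k_t$; here the delicate point is to balance, across $t$, the choice $k_t$ so that $\sum(k_t-1)\lesssim n$ and simultaneously $\sum_t 2^{-\lambda_*k_*t}u_*(2^{k_*t})(\log(1+\nu_{t+1}/k_t)/k_t)^{1/p-1/q}$ has order equal to the $t\approx t_*(n)$ term, using Lemma \ref{sum_lem} to control the slowly varying factors $u_*$ and (via Lemma \ref{obr}, Lemma \ref{log}) to identify $2^{k_*t_*(n)}\asymp n^{\beta_*}\varphi_*(n)$.

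The third step handles the case (\ref{nu_t_k1}), where $k_*=1$ and $\nu_t\lesssim 2^{\gamma_*2^t}\psi_*(2^{2^t})$ grows doubly exponentially, so $t_*(n)\asymp\log n$ and $t_{**}(n)\asymp n$ when $p<q$ (Proposition \ref{nu_t_st}, (\ref{nu_t_est2})). Here the number of summands is $\asymp t_{**}(n)$ and $\log\nu_t\asymp 2^t$; for $p=q$ one takes $k_t\asymp 2^t$ so $\sum_{t\le\log n}2^t\lesssim n$ and the geometric sum of $2^{-\lambda_* t}u_*(2^t)$ gives $(\log n)^{-\lambda_*}u_*(\log n)$; for $p<q$ one splits according to whether $\lambda_*$ exceeds $\frac1p-\frac1q$ — in the decaying-faster regime the sum is again dominated by the last ($t\approx t_{**}(n)\asymp n$) term giving $n^{-\lambda_*}u_*(n)$, while in the other regime the geometric-type sum over $t\le t_*(n)\asymp\log n$ of the Schütt middle-range bound produces the extra $n^{1/q-1/p}(\log n)^{-\lambda_*+1/p-1/q}$ factor. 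The main obstacle throughout is the bookkeeping in step two: one must choose $\{k_t\}$ uniformly (not depending on $f$), verify (\ref{slttkt}), and show the resulting sum of Schütt bounds is genuinely of the order of its dominant term — this requires careful case analysis on the sign of $\delta_*-\lambda_*\beta_*$ (resp. $\delta_*-\mu_*\beta_*$) and repeated appeals to Lemma \ref{sum_lem} to push the slowly varying corrections through the geometric summation.
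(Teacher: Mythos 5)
Your high-level structure — factor $Q_{t+1}-Q_t$ through ${\cal S}_{T_t}(\Omega)$ via Lemma \ref{oper_a}, use (\ref{mult_n}) and Theorem \ref{shutt_trm}, bound $\|Q_{t+1}-Q_t\|_{\hat X_p\to Y_{p,q,T_t}}$ by (\ref{qqt})/(\ref{qqt1}), and then make a choice of $k_t$ with $\sum(k_t-1)\lesssim n$ — matches the paper exactly. But the specific choice of $k_t$ is where the plan breaks down, and this is not a bookkeeping detail: it is the whole lemma.

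Consider your $p>q$ step. With $k_t\asymp \nu_{t+1}\asymp\overline{s}_t$, Theorem \ref{shutt_trm} gives $e_{k_t}(I_{\overline{s}_t}:l_q^{\overline{s}_t}\to l_q^{\overline{s}_t})\asymp 2^{-k_t/\overline{s}_t}\asymp 1$; the entropy factor is essentially a constant uniformly in $t$. But by (\ref{2ll}) the norm factor $2^{-\mu_*k_*t}u_*(2^{k_*t})\overline{\nu}_t^{1/q-1/p}$ is (up to $c_3$) \emph{non-increasing} in $t$, so the sum is dominated by its \emph{first} term $t=t_0$, which is a constant $C(\mathfrak{Z}_0)$, not by its last term — you would obtain a bound of order $t_*(n)\asymp\log n$ rather than the required $n^{-\mu_*\beta_*+\frac1q-\frac1p}\varphi_*^{-\mu_*}(n)u_*(n^{\beta_*}\varphi_*(n))$. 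The same problem occurs in your step 3 for $p=q$ with $k_t\asymp 2^t$: then $k_t\ll\overline{s}_t\asymp 2^{\gamma_*2^t}\psi_*$, so $e_{k_t}\asymp1$ and $\sum_{t\le t_*(n)} 2^{-\lambda_*t}u_*(2^t)$ is dominated by its first, $O(1)$, term, not by $(\log n)^{-\lambda_*}u_*(\log n)$. In addition, in your $p<q$, $\lambda_*<\frac1p-\frac1q$ sub-case, choosing $k_t$ of size $\asymp 2^t$ up to $t\approx t_{**}(n)\asymp n$ makes $\sum(k_t-1)\asymp 2^n$, violating (\ref{slttkt}).

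The mechanism the paper exploits — and which is missing from your plan — is to pick $k_t$ decaying at a deliberately \emph{slow} geometric rate away from a peak: $k_t=\lceil n\cdot 2^{-\varepsilon(t_*(n)-t)}\rceil$ for $t<t_*(n)$, and (for $p<q$) $\lceil n\cdot 2^{-\varepsilon|t-\hat t(n)|}\rceil$ beyond, with a sufficiently small $\varepsilon=\varepsilon(\mathfrak{Z}_0)>0$ (see (\ref{kt_def})). Then $\sum(k_t-1)\lesssim n$ is automatic, and for $t<t_*(n)$ the ratio $k_t/\overline{s}_t$ grows like $2^{(\gamma_*k_*-\varepsilon)(t_*(n)-t)}$ as $t$ decreases (because $\overline{s}_t$ decays at the faster rate $2^{-\gamma_*k_*}$), so $2^{-k_t/\overline{s}_t}$ decays \emph{super}-geometrically. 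It is precisely this super-geometric decay of the Schütt factor that overcomes the decrease of the norm factor guaranteed by (\ref{2l}), (\ref{2ll}), and forces the sum to concentrate at $t=t_*(n)-1$. Without it, there is no way to pass from a sum with $O(1)$ initial term to the stated $o(1)$ bound, so the plan as written cannot be repaired simply by "balancing": the geometric-with-small-$\varepsilon$ structure is essential.

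Your description of the slowly-varying bookkeeping (Lemma \ref{sum_lem}, Lemmas \ref{obr}, \ref{log}, Proposition \ref{nu_t_st}) and of the middle-range Schütt estimate over $[t_*(n),t_{**}(n))$ is otherwise consistent with the paper.
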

\begin{proof}
We set $s_t=\dim {\cal S}_{T_t}(\Omega)$. By (\ref{nu_t_k}),
(\ref{nu_t_k1}) and (\ref{tt_def}), there exists
$C=C(\mathfrak{Z}_0)\ge 1$ such that
\begin{align}
\label{stc} s_t\le C\overline{\nu}_t=:\overline{s}_t.
\end{align}
By Lemma \ref{oper_a}, there exists an operator $A_t:{\cal
S}_{T_t}(\Omega) \rightarrow \R^{s_t}$ such that
\begin{align}
\label{atypqt} \|A_t\|_{Y_{p,q,T_t}\rightarrow
l_{\sigma_{p,q}}^{s_t}} \underset{\sigma_{p,q},r_0}{\lesssim} 1,
\quad \|A_t^{-1}\|_{l_q^{s_t}\rightarrow Y_q(\Omega)}
\underset{q,r_0} {\lesssim} 1.
\end{align}
By (\ref{mult_n}), (\ref{stc}) and (\ref{atypqt}),
$$
\sum \limits _{t=t_0}^{t_{**}(n)-1}e_{k_t}(Q_{t+1}-Q_t:\hat
X_p(\Omega) \rightarrow Y_q(\tilde
U_{t+1}))\underset{\mathfrak{Z}_0}{\lesssim}
$$$$\lesssim\sum \limits _{t=t_0}^{t_{**}(n)-1} \|Q_{t+1}-Q_t\|
_{\hat X_p(\Omega) \rightarrow Y_{p,q,T_t}(\Omega)}
e_{k_t}(I_{\overline{s}_t}:l_{\sigma_{p,q}}^{\overline{s}_t}
\rightarrow l_q^{\overline{s}_t}).
$$

Let $p>q$. Then $\|\cdot\|_{p,q,T_t}=\|\cdot\|_{Y_q(\tilde
U_{t+1})}$. It was proved in \cite[Step 3 in the proof of Theorem
2]{vas_width_raspr} that for $f\in B\hat X_p(\Omega)$
\begin{align}
\label{fqtf} \|f-Q_tf\|_{Y_q(\tilde U_t)}
\underset{\mathfrak{Z}_0}{\lesssim} 2^{-\mu_*k_*t}
u_*(2^{k_*t})\overline{\nu}_t^{\frac 1q-\frac 1p}.
\end{align}
Hence,
\begin{align}
\label{qqt} \|Q_{t+1}-Q_t\| _{\hat X_p(\Omega) \rightarrow
Y_{p,q,T_t}(\Omega)}\underset{\mathfrak{Z}_0}{\lesssim}
2^{-\mu_*k_*t} u_*(2^{k_*t})\overline{\nu}_t^{\frac 1q-\frac 1p},
\quad p>q.
\end{align}

Let $p\le q$. It was proved in \cite[Step 3 in the proof of
Theorem 2]{vas_width_raspr} that for $f\in B\hat X_p(\Omega)$
$$
\|f-Q_tf\|_{p,q,T_t}\underset{\mathfrak{Z}_0}{\lesssim}
2^{-\lambda_*k_*t} u_*(2^{k_*t}),
$$
which implies
\begin{align}
\label{qqt1} \|Q_{t+1}-Q_t\| _{\hat X_p(\Omega) \rightarrow
Y_{p,q,T_t}(\Omega)}\underset{\mathfrak{Z}_0}{\lesssim}
2^{-\lambda_*k_*t} u_*(2^{k_*t}), \quad p\le q.
\end{align}

Let $\varepsilon>0$ (it will be chosen later by $\mathfrak{Z}_0$).
Denote
$$
\hat t(n)=t_*(n), \quad \text{if (\ref{nu_t_k}) holds}, \quad
\text{or if (\ref{nu_t_k1}) holds and } \lambda_*>\frac 1p-\frac
1q,
$$
and $\hat t(n)=t_{**}(n)$, if (\ref{nu_t_k1}) holds and
$\lambda_*<\frac 1p-\frac 1q$.

We set
\begin{align}
\label{kt_def} k_t=\left\{ \begin{array}{l} \lceil n\cdot
2^{-\varepsilon(t_*(n)-t)}\rceil \quad \text{for} \quad t< t_*(n),
\\ \lceil n\cdot 2^{-\varepsilon|t-\hat t(n)|}\rceil \quad
\text{for} \quad t_*(n)\le t< t_{**}(n). \end{array} \right.
\end{align}
Then (\ref{slttkt}) holds.

By (\ref{nu_t_k}), (\ref{nu_t_k1}), (\ref{stc}) and
(\ref{kt_def}), there exists $l_*=l_*(\mathfrak{Z}_0)\in \N$ such
that for sufficiently small $\varepsilon>0$ and for any $0\le
r<l_*$ the sequence $\left\{\frac{k_{l_*t+r}}
{\overline{s}_{l_*t+r}} \right\}_{l_*t+r\le t_*(n)-1}$ decreases
not slower than some geometric progression. Moreover, for $t<
t_*(n)$ we have $\frac{\overline{s}_t}{k_t}
\underset{\mathfrak{Z}_0} {\lesssim} 1$. Therefore, by Theorem
\ref{shutt_trm},
$$e_{k_t}(I_{\overline{s}_t} :l_{\sigma_{p,q}} ^{\overline{s}_t}
\rightarrow l_q^{\overline{s}_t}) \underset{p,q}{\asymp}
2^{-\frac{k_t}{\overline{s}_t}}\overline{s}_t^{\frac 1q-\frac
{1}{\sigma_{p,q}}}, \quad t<t_*(n).$$

Let $p>q$. By Remark \ref{1511}, we have (\ref{nu_t_k}). Hence,
$$
\sum \limits _{t=t_0}^{t_*(n)-1} \|Q_{t+1}-Q_t\| _{\hat
X_p(\Omega) \rightarrow Y_{p,q,T_t}(\Omega)}
e_{k_t}(I_{\overline{s}_t}:l_{\sigma_{p,q}}^{\overline{s}_t}
\rightarrow l_q^{\overline{s}_t})
\stackrel{(\ref{qqt})}{\underset{\mathfrak{Z}_0}{\lesssim}}
$$$$\lesssim 2^{-\mu_*k_*t_*(n)} u_*(2^{k_*t_*(n)})\overline{\nu}
_{t_*(n)}^{\frac 1q-\frac 1p}
\stackrel{(\ref{nu_t_est1})}{\underset{\mathfrak{Z}_0}{\asymp}}
n^{-\mu_*\beta_*+\frac 1q-\frac 1p} \varphi_*^{-\mu_*}(n)
u_*(n^{\beta_*}\varphi_*(n)).
$$
If $p\le q$ and (\ref{nu_t_k}) holds, then
$$
\sum \limits _{t=t_0}^{t_*(n)-1} \|Q_{t+1}-Q_t\| _{\hat
X_p(\Omega) \rightarrow Y_{p,q,T_t}(\Omega)}
e_{k_t}(I_{\overline{s}_t}:l_{\sigma_{p,q}}^{\overline{s}_t}
\rightarrow l_q^{\overline{s}_t})
\stackrel{(\ref{qqt1})}{\underset{\mathfrak{Z}_0}{\lesssim}}
$$$$\lesssim 2^{-\lambda_*k_*t_*(n)} u_*(2^{k_*t_*(n)})
\overline{\nu}_{t_*(n)}^{\frac 1q-\frac 1p}
\stackrel{(\ref{nu_t_est1})}{\underset{\mathfrak{Z}_0}{\asymp}}
n^{-\lambda_*\beta_*+\frac 1q-\frac 1p} \varphi_*^{-\lambda_*}(n)
u_*(n^{\beta_*}\varphi_*(n)).
$$
If $p\le q$ and (\ref{nu_t_k1}) holds, then
$$
\sum \limits _{t=t_0}^{t_*(n)-1} \|Q_{t+1}-Q_t\| _{\hat
X_p(\Omega) \rightarrow Y_{p,q,T_t}(\Omega)}
e_{k_t}(I_{\overline{s}_t}:l_{\sigma_{p,q}}^{\overline{s}_t}
\rightarrow l_q^{\overline{s}_t})
\stackrel{(\ref{qqt1})}{\underset{\mathfrak{Z}_0}{\lesssim}}
$$
$$
\lesssim 2^{-\lambda_*k_*t_*(n)} u_*(2^{k_*t_*(n)}) \cdot \max
_{0\le r\le l_*}
2^{-\frac{k_{t_*(n)-1-r}}{\overline{s}_{t_*(n)-1-r}}}\overline{s}_{t_*(n)-1-r}^{\frac
1q-\frac 1p} \underset{\mathfrak{Z}_0}{\lesssim}
$$
$$
\lesssim 2^{-\lambda_*k_*t_*(n)} u_*(2^{k_*t_*(n)})
k_{t_*(n)-1}^{\frac 1q-\frac 1p}
\stackrel{(\ref{nu_t_est2})}{\underset{\mathfrak{Z}_0}{\lesssim}}
 n^{\frac 1q-\frac 1p} (\log n)^{-\lambda_*}u_*(\log n).
$$

If $p\ge q$, we get the desired estimates in Lemma since
$t_*(n)=t_{**}(n)$.

Let $p<q$ (then $\sigma_{p,q}=p$). We apply Theorem
\ref{shutt_trm}. For $t_*(n)\le t<t_{**}(n)$ we have
$$e_{k_t}(I_{\overline{s}_t}:l_{\sigma_{p,q}}^{\overline{s}_t}
\rightarrow l_q^{\overline{s}_t}) \underset{p,q}{\lesssim}
k_t^{\frac 1q-\frac 1p} \left(\log
\left(1+\frac{\overline{\nu}_t}{k_t}\right)\right)^{\frac 1p-\frac
1q}\underset{\mathfrak{Z}_0}{\lesssim} k_t^{\frac 1q-\frac 1p}
(\log \overline{\nu}_t)^{\frac 1p-\frac 1q}.$$

If (\ref{nu_t_k}) holds, then for sufficiently small
$\varepsilon>0$
$$
\sum \limits _{t=t_*(n)}^{t_{**}(n)-1} \|Q_{t+1}-Q_t\| _{\hat
X_p(\Omega) \rightarrow Y_{p,q,T_t}(\Omega)}
e_{k_t}(I_{\overline{s}_t}:l_{\sigma_{p,q}}^{\overline{s}_t}
\rightarrow l_q^{\overline{s}_t}) \stackrel{(\ref{qqt1}),
(\ref{kt_def})}{\underset{\mathfrak{Z}_0}{\lesssim}}
$$
$$
\lesssim \sum \limits _{t=t_*(n)}^{t_{**}(n)-1} 2^{-\lambda_*k_*t}
u_*(2^{k_*t}) n^{\frac 1q-\frac 1p} 2^{\varepsilon\left(\frac
1p-\frac 1q\right)(t-t_*(n))} \left(\log \left(1+
\frac{\overline{\nu}_t}{k_t}\right)\right)^{\frac 1p-\frac 1q}
\stackrel{(\ref{nu_t_est1}),
(\ref{kt_def})}{\underset{\mathfrak{Z}_0}{\lesssim}}
$$
$$
\lesssim n^{-\lambda_*\beta_* +\frac 1q-\frac 1p}
\varphi_*^{-\lambda_*}(n) u_*(n^{\beta_*}\varphi_*(n)).
$$
If (\ref{nu_t_k1}) holds, then
$$
\sum \limits _{t=t_*(n)}^{t_{**}(n)-1} \|Q_{t+1}-Q_t\| _{\hat
X_p(\Omega) \rightarrow Y_{p,q,T_t}(\Omega)}
e_{k_t}(I_{\overline{s}_t}:l_{\sigma_{p,q}}^{\overline{s}_t}
\rightarrow l_q^{\overline{s}_t})
\stackrel{(\ref{nu_t_k1}),(\ref{sum_l_est}),(\ref{qqt1}),
(\ref{kt_def})}{\underset{\mathfrak{Z}_0}{\lesssim}}
$$
$$
\lesssim \sum \limits _{t=t_*(n)}^{t_{**}(n)-1} 2^{-\lambda_*t}
u_*(2^{t}) n^{\frac 1q-\frac 1p} 2^{\varepsilon\left(\frac
1p-\frac 1q\right)|t-\hat t(n)|} 2^{t\left(\frac 1p-\frac
1q\right)}=:S.
$$
If $\lambda_*>\frac 1p-\frac 1q$, then $S
\stackrel{(\ref{nu_t_est2})}{\underset{\mathfrak{Z}_0}{\lesssim}}
n^{\frac 1q-\frac 1p} (\log n)^{-\lambda_*+\frac 1p-\frac 1q}
u_*(\log n)$. If $\lambda_*<\frac 1p-\frac 1q$, then $S
\stackrel{(\ref{nu_t_est2})}{\underset{\mathfrak{Z}_0}{\lesssim}}
n^{-\lambda_*} u_*(n)$. This completes the proof.
\end{proof}

For $t\ge t_0$, $m\in \Z_+$ we set
\begin{align}
\label{hattm} \hat T_{t,m}=\{E\cap E':\; E\in \tilde T_{t,m}, \;
E'\in \tilde T_{t,m+1}, \;\; {\rm mes}(E\cap E')>0\}.
\end{align}
Let
\begin{align}
\label{stmdef} \tilde s_{t,m}=\dim {\cal S}_{\tilde
T_{t,m}}(\Omega), \quad s_{t,m}=\dim {\cal S}_{\hat
T_{t,m}}(\Omega).
\end{align}
From (\ref{cttm}) and (\ref{card_et}) it follows that there exists
$C_1(\mathfrak{Z}_0)\ge 1$ such that
\begin{align}
\label{tstm} \tilde s_{t,m}\le C_1\cdot 2^m, \quad s_{t,m}\le
C_1\cdot 2^m.
\end{align}
By Lemma \ref{oper_a}, there exists a linear isomorphism
$A_{t,m}:{\cal S}_{\hat T_{t,m}}(\Omega) \rightarrow \R^{s_{t,m}}$
such that
\begin{align}
\label{atm_m1} \|A_{t,m}\|_{Y_{p,q,\hat
T_{t,m}}(G_{m,t})\rightarrow l_{\sigma_{p,q}}^{s_{t,m}}}
\underset{\sigma_{p,q},r_0}{\lesssim} 1, \quad
\|A_{t,m}^{-1}\|_{l_q^{s_{t,m}}\rightarrow Y_q(G_{m,t})}
\underset{q,r_0} {\lesssim} 1.
\end{align}

\begin{Lem}
\label{ptm_sum} There exists a sequence $\{k_{t,m}\}_{t_0\le t<
t_*(n), m\in \Z_+}\subset \N$ such that $\sum \limits _{t_0\le t<
t_*(n), m\in \Z_+}(k_{t,m}-1) \underset{\mathfrak{Z}_0}{\lesssim}
n$ and the following assertions hold.
\begin{enumerate}
\item Let (\ref{nu_t_k}) hold, let the sequence $\{k_t\}_{t\ge t_0}$
be defined by (\ref{kt_def}), and let $\sigma_*(n)$ be such as in
Theorem \ref{trm1}. Then for $p>q$
$$
\sum \limits _{t=t_0}^{t_*(n)-1}\sum \limits _{m=m_t+1}^\infty
e_{k_{t,m}}(P_{t,m+1}-P_{t,m}:\hat X_p(\Omega) \rightarrow
Y_q(G_t))+$$$$+ \sum \limits _{t=t_0}^{t_*(n)-1}
e_{k_t}(P_{t,m_t+1}- Q_t:\hat X_p(\Omega) \rightarrow
Y_q(G_t))\underset{\mathfrak{Z}_0}{\lesssim} n^{-\min\{\delta_*,
\, \mu_*\beta_*\}+\frac 1q-\frac 1p} \sigma_*(n),
$$
and for $p\le q$
$$
\sum \limits _{t=t_0}^{t_*(n)-1}\sum \limits _{m=0}^\infty
e_{k_{t,m}}(P_{t,m+1}-P_{t,m}:\hat X_p(\Omega) \rightarrow
Y_q(G_{m,t})) \underset{\mathfrak{Z}_0}{\lesssim}
n^{-\min\{\delta_*, \, \lambda_*\beta_*\}+\frac 1q-\frac 1p}
\sigma_*(n).
$$
\item Suppose that (\ref{nu_t_k1}) holds. Then
$$
\sum \limits _{t=t_0}^{t_*(n)-1}\sum \limits _{m=0}^\infty
e_{k_{t,m}}(P_{t,m+1}-P_{t,m}:\hat X_p(\Omega) \rightarrow
Y_q(G_{m,t})) \underset{\mathfrak{Z}_0}{\lesssim} n^{\frac
1q-\frac 1p}(\log n)^{-\lambda_*+\frac 1p-\frac 1q}u_*(\log n).
$$
\end{enumerate}
\end{Lem}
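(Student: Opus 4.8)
The plan is to reduce each difference $P_{t,m+1}-P_{t,m}$ (and the correction $P_{t,m_t+1}-Q_t$) to a finite–dimensional identity operator $I_\nu:l_{\sigma_{p,q}}^\nu\to l_q^\nu$, to apply Theorem~\ref{shutt_trm}, and then to sum over $t$ and $m$ with a carefully distributed budget $\{k_{t,m}\}$. Since for $t_0\le t<t_*(n)$, $m\in\Z_+$ the function $(P_{t,m+1}f-P_{t,m}f)\chi_{G_{m,t}}$ lies in ${\cal S}_{\hat T_{t,m}}(\Omega)$ (see (\ref{hattm})), I would write $P_{t,m+1}-P_{t,m}=A_{t,m}^{-1}\circ A_{t,m}\circ(P_{t,m+1}-P_{t,m})$, insert a copy of $l_{\sigma_{p,q}}^{s_{t,m}}$ between the two occurrences of $\R^{s_{t,m}}$, and use (\ref{mult_n}), (\ref{atm_m1}), the bound $s_{t,m}\le C_1 2^m$ from (\ref{tstm}) and the monotonicity of entropy numbers in the dimension to obtain
$$
e_{k_{t,m}}\bigl(P_{t,m+1}-P_{t,m}:\hat X_p(\Omega)\to Y_q(G_{m,t})\bigr)\underset{\mathfrak{Z}_0}{\lesssim}\omega_{t,m}\,e_{k_{t,m}}\bigl(I_{C_1 2^m}:l_{\sigma_{p,q}}^{C_1 2^m}\to l_q^{C_1 2^m}\bigr),
$$
where $\omega_{t,m}:=\|P_{t,m+1}-P_{t,m}\|_{\hat X_p(\Omega)\to Y_{p,q,\hat T_{t,m}}(G_{m,t})}$; the same factorization through the isomorphism $A_t$ of Lemma~\ref{oper_a} (with $\dim{\cal S}_{T_t}(\Omega)\lesssim\overline\nu_t$) handles $e_{k_t}(P_{t,m_t+1}-Q_t)$.

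To bound $\omega_{t,m}$ I would take $E=E_1\cap E_2\in\hat T_{t,m}$ with $E_1\in\tilde T_{t,m}$, $E_2\in\tilde T_{t,m+1}$, estimate $\|(P_{t,m+1}f-P_{t,m}f)|_E\|_{Y_q(E)}\le\|(f-P_{t,m}f)|_{E_1}\|_{Y_q(E_1)}+\|(f-P_{t,m+1}f)|_{E_2}\|_{Y_q(E_2)}$ by (\ref{fpttm1}) for $m\le m_t$ and (\ref{fpttm2}) for $m>m_t$, sum the $\sigma_{p,q}$–th powers over $E\in\hat T_{t,m}$, and invoke the bounded overlap (\ref{card_et}), the additivity (\ref{f_xp}) of $\|\cdot\|_{X_p}$, and — only when $p>q$ — H\"older's inequality with ${\rm card}\,\hat T_{t,m}\lesssim 2^m$. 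For $f\in B\hat X_p(\Omega)$ this gives $\omega_{t,m}\underset{\mathfrak{Z}_0}{\lesssim}2^{-\lambda_*k_*t}u_*(2^{k_*t})$ when $m\le m_t$, $p\le q$; $\omega_{t,m}\underset{\mathfrak{Z}_0}{\lesssim}2^{-\mu_*k_*t}u_*(2^{k_*t})2^{-\delta_*(m-m_t)}$ when $m>m_t$, $p\le q$; and $\omega_{t,m}\underset{\mathfrak{Z}_0}{\lesssim}2^{-\mu_*k_*t}u_*(2^{k_*t})2^{-\delta_*(m-m_t)}2^{m(\frac1q-\frac1p)}$ when $m\ge m_t+1$, $p>q$; for the correction I would additionally use (\ref{fqtf}) to get $\|P_{t,m_t+1}-Q_t\|_{\hat X_p(\Omega)\to Y_{p,q,T_t}}\underset{\mathfrak{Z}_0}{\lesssim}2^{-\mu_*k_*t}u_*(2^{k_*t})\overline\nu_t^{\frac1q-\frac1p}$ for $p>q$ (and $\underset{\mathfrak{Z}_0}{\lesssim}2^{-\lambda_*k_*t}u_*(2^{k_*t})$ for $p\le q$).

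Next I would fix $\varepsilon>0$ small (depending on $\mathfrak{Z}_0$) and set $k_{t,m}=\lceil C_1 2^{\min\{m,m_t\}}2^{-\varepsilon|m-m_t|}2^{-\varepsilon(t_*(n)-t)}\rceil$, with a modification for $p>q$ and $m$ in a bounded range just above $m_t$ which lets $k_{t,m}$ exceed $C_1 2^m$, so that the factor $2^{-k_{t,m}/(C_1 2^m)}$ coming from Theorem~\ref{shutt_trm} decays super–geometrically in $t_*(n)-t$; the correction terms would use $\{k_t\}$ from (\ref{kt_def}). As $k_{t,m}-1=0$ once the bracket drops below $1$, at most $\lesssim m_t/\varepsilon$ pairs are active per level, $\sum_m(k_{t,m}-1)\lesssim 2^{m_t}2^{-\varepsilon(t_*(n)-t)}\asymp\nu_t 2^{-\varepsilon(t_*(n)-t)}\le c_3\overline\nu_t 2^{-\varepsilon(t_*(n)-t)}$, and — since by (\ref{nu_t_k}) (resp.\ (\ref{nu_t_k1})) and Lemma~\ref{sum_lem} the $\overline\nu_t$ grow at least geometrically (resp.\ doubly exponentially) in $t$ while $\overline\nu_{t_*(n)-1}<n$ — the sum $\sum_{t<t_*(n)}\overline\nu_t 2^{-\varepsilon(t_*(n)-t)}$ is dominated by its last term, whence $\sum_{t,m}(k_{t,m}-1)\lesssim n$. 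I would then plug the norms of the previous step into the reduction, insert Theorem~\ref{shutt_trm} in the regime appropriate to the position of $k_{t,m}$ relative to $C_1 2^m$, sum the geometric series in $m$ (convergent because $\delta_*>(\frac1q-\frac1p)_+$, which for $p<q$ also absorbs the logarithm $\log(1+\nu/k)$), and finally sum in $t$: by (\ref{2l}), (\ref{2ll}) and Lemma~\ref{sum_lem} the $t$–series is dominated, up to a constant, by its term at $t=t_*(n)$ when $\delta_*<\lambda_*\beta_*$ (resp.\ $\delta_*<\mu_*\beta_*$), and otherwise by the ``critical'' profile $u_*(2^{k_*t})\varphi_*^{-\lambda_*}$ (resp.\ $u_*(2^{k_*t})\varphi_*^{-\mu_*}$) evaluated, via Lemma~\ref{obr}, at $2^{k_*t}\asymp n^{\beta_*}\varphi_*(n)$. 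Substituting, by Proposition~\ref{nu_t_st}, $2^{k_*t_*(n)}\asymp n^{\beta_*}\varphi_*(n)$ and $\overline\nu_{t_*(n)}\asymp n$ under (\ref{nu_t_k}) — and $2^{t_*(n)}\asymp\log n$, together with the factor $n^{\frac1q-\frac1p}$ produced by the middle Sch\"utt regime, under (\ref{nu_t_k1}) — converts these into exactly the right–hand sides of assertions~1 and~2; the correction terms $e_{k_t}(P_{t,m_t+1}-Q_t)$ are treated identically, using $k_t/\overline\nu_t\gtrsim 1$.

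The substantive part is the bookkeeping in the last step: one has to pick a single $\varepsilon$ that simultaneously keeps the $m$–sum convergent ($\varepsilon$ small against $\delta_*-(\frac1q-\frac1p)_+$), keeps the budget geometric ($\varepsilon<\gamma_*k_*$), and reproduces the correct power of $n$ and the correct slowly varying factor $\sigma_*(n)$ in each of the sub–cases $p\lessgtr q$ and $\delta_*\lessgtr\lambda_*\beta_*$ (resp.\ $\delta_*\lessgtr\mu_*\beta_*$). It is precisely the competition between the ``diagonal'' contributions near $m=m_t$, $t=t_*(n)$ and the ``tail'' contributions in $m$ that forces the exponent $-\min\{\delta_*,\lambda_*\beta_*\}$ (resp.\ $-\min\{\delta_*,\mu_*\beta_*\}$), and matching the two regimes with the right $\sigma_*(n)$ is where essentially all the care goes.
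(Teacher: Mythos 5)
Your overall framework — factoring each $P_{t,m+1}-P_{t,m}$ and $P_{t,m_t+1}-Q_t$ through the isomorphisms $A_{t,m}$, $A_t$ of Lemma~\ref{oper_a}, bounding the operator norms via (\ref{fpttm1}), (\ref{fpttm2}), (\ref{fqtf}), and then invoking Theorem~\ref{shutt_trm} with a distributed budget $\{k_{t,m}\}$ — is exactly the paper's strategy, and the norm estimates you derive for $\omega_{t,m}$ match (\ref{fff}), (\ref{ggg}), (\ref{ptm1fplq}), (\ref{ptmtil}).

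The gap is in the budget. You set $k_{t,m}\approx C_1\,2^{\min\{m,m_t\}}2^{-\varepsilon|m-m_t|}2^{-\varepsilon(t_*(n)-t)}$, which is peaked at $(m,t)=(m_t,t_*(n))$ regardless of the sub-case. This cannot produce the bound $n^{-\delta_*+\frac1q-\frac1p}\sigma_*(n)$ when $\delta_*<\lambda_*\beta_*$ (resp.\ $\delta_*<\mu_*\beta_*$). Consider $p=q$: for $t$ near $t_0$ and $m\le m_t$ your $k_{t,m}/\overline{s}_{t,m}\asymp 2^{-\varepsilon(m_t-m+t_*(n)-t)}\ll 1$, so by Theorem~\ref{shutt_trm} the factor $e_{k_{t,m}}(I_{\overline s_{t,m}}:l_p^{\overline s_{t,m}}\to l_p^{\overline s_{t,m}})\asymp 1$ — there is no super-exponential decay. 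The $m$-sum at such $t$ then evaluates to $\asymp m_t\,2^{-\lambda_*k_*t}u_*(2^{k_*t})$, and the $t$-sum is dominated by the \emph{smallest} $t$ (precisely because of the monotonicity (\ref{2l})), yielding a constant rather than $n^{-\delta_*}$. Your assertion that ``the $t$-series is dominated by its term at $t=t_*(n)$ when $\delta_*<\lambda_*\beta_*$'' is therefore backwards: with your budget the dominant term is at $t=t_0$ and the sum does not tend to zero.

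What is missing is the case-dependent centering that the paper builds in. The paper defines an auxiliary reference time $t_1(n)$ in (\ref{t11})--(\ref{t13}) equal to $t_0$ when $\delta_*<\mu_*\beta_*$ (resp.\ $\lambda_*\beta_*$) and to $t_*(n)$ otherwise, and then sets $m_t^*=\lceil\log(n\cdot 2^{-\varepsilon|t-t_1(n)|})\rceil$ and $k_{t,m}=\lceil n\cdot 2^{-\varepsilon(|t-t_1(n)|+|m-m_t^*|)}\rceil$ as in (\ref{mtst}). When $\delta_*<\lambda_*\beta_*$ this puts budget $\asymp n$ at $t\approx t_0$ and $m\approx\log n$, i.e.\ far out in the tail $m\gg m_{t_0}$ where the norm already carries the factor $2^{-\delta_*(m-m_t)}\asymp n^{-\delta_*}$, and simultaneously gives $k_{t,m}/\overline s_{t,m}\gg 1$ for $m\le m_t$ so those small-$m$, small-$t$ contributions are killed super-exponentially. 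Your budget never allocates more than $O(1)$ to indices with $m\gg m_t$ at small $t$, so those contributions survive at order one. Without introducing the analogue of $t_1(n)$ (or some equivalent case split in the budget), the sub-case $\delta_*<\lambda_*\beta_*$ (and $\delta_*<\mu_*\beta_*$ for $p>q$) of assertion~1 cannot be closed by your argument.
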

\begin{proof}
By (\ref{mult_n}) and (\ref{atm_m1}), it suffices to estimate
\begin{align}
\label{slimtt0} \begin{array}{c} \sum \limits
_{t=t_0}^{t_*(n)-1}\sum \limits _{m=m_t+1}^\infty
\|P_{t,m+1}-P_{t,m}\|_{\hat X_p(\Omega) \rightarrow Y_q(G_t)}
e_{k_{t,m}}(I_{s_{t,m}}: l^{s_{t,m}}_q \rightarrow l_q^{s_{t,m}})+
\\
+\sum \limits _{t=t_0}^{t_*(n)-1} \|P_{t,m_t+1}-Q_t\|_{\hat
X_p(\Omega) \rightarrow Y_q(G_t)} e_{k_{t}}(I_{\tilde
s_{t,m_t+1}}: l^{\tilde s_{t,m_t+1}}_q \rightarrow l_q^{\tilde
s_{t,m_t+1}})=:S_1
\end{array}
\end{align}
for $p>q$ and
\begin{align}
\label{slimtt01} \sum \limits _{t=t_0}^{t_*(n)-1}\sum \limits
_{m=0}^\infty \|P_{t,m+1}-P_{t,m}\|_{\hat X_p(\Omega) \rightarrow
Y_{p,q,\tilde T_{t,m}}(\Omega)} e_{k_{t,m}}(I_{s_{t,m}}:
l^{s_{t,m}}_{p} \rightarrow l_q^{s_{t,m}})=:S_2
\end{align}
for $p\le q$.

Let
\begin{align}
\label{stm} \overline{s}_{t,m}=\lceil C_1\cdot 2^m\rceil.
\end{align}

We define the number $t_1(n)$ as follows. In assertion 1 of Lemma
we set
\begin{align}
\label{t11} t_1(n)=\left\{ \begin{array}{l} t_0 \quad
\text{if}\quad\delta_*<\mu_*\beta_*, \\
t_*(n), \quad \text{if}\quad \delta_*>\mu_*\beta_*
\end{array}\right.
\end{align}
for $p>q$ and
\begin{align}
\label{t12} t_1(n)=\left\{ \begin{array}{l} t_0, \quad
\text{if}\quad\delta_*<\lambda_*\beta_*, \\
t_*(n), \quad \text{if}\quad \delta_*>\lambda_*\beta_*
\end{array}\right.
\end{align}
for $p\le q$. In assertion 2 we set
\begin{align}
\label{t13} t_1(n)=t_*(n).
\end{align}
Denote
\begin{align}
\label{mtst} m_t^*=\lceil \log (n\cdot 2^{-\varepsilon|t-t_1(n)|})
\rceil, \quad k_{t,m}=\lceil n\cdot 2^{-\varepsilon(|t-t_1(n)|+
|m-m_t^*|)}\rceil.
\end{align}
Then $k_{t,m}\in \N$ and $\sum \limits _{t=t_0}^{t_*(n)-1}\sum
\limits _{m=0}^\infty (k_{t,m}-1)
\underset{\mathfrak{Z}_0,\varepsilon}{\lesssim} n$.

{\bf Case $p>q$.} From H\"{o}lder's inequality it follows that for
$f\in B\hat X_p(\Omega)$, $m>m_t$
$$
\|P_{t,m+1}f-P_{t,m}f\|_{Y_q(G_{m,t})}
\stackrel{(\ref{fpttm2})}{\underset{\mathfrak{Z}_0}{\lesssim}}
$$
$$
\lesssim 2^{-\mu_*k_*t-\delta_*(m-m_t)}u_*(2^{k_*t})({\rm card}\,
\hat T_{t,m})^{\frac 1q-\frac 1p} \stackrel{(\ref{cttm}),
(\ref{card_et})}{\underset{\mathfrak{Z}_0}{\lesssim}}
2^{-\mu_*k_*t-\delta_*(m-m_t)}u_*(2^{k_*t})\cdot 2^{m\left(\frac
1q-\frac 1p\right)}
\stackrel{(\ref{mt_def})}{\underset{\mathfrak{Z}_0}{\lesssim}}
$$
$$
\lesssim 2^{-\mu_*k_*t} u_*(2^{k_*t}) \cdot
\overline{\nu}_t^{\delta_*} \cdot 2^{m\left(-\delta_*+\frac
1q-\frac 1p\right)}.
$$
Further, for $f\in B\hat X_p(\Omega)$ by H\"{o}lder's inequality
we het
$$
\|P_{t,m_t+1}f-Q_tf\|_{Y_q(G_t)} \le
\|f-P_{t,m_t+1}f\|_{Y_q(G_t)}+\|f-Q_tf\|_{Y_q(G_t)}
\stackrel{(\ref{cttm}),(\ref{fpttm2}),(\ref{fqtf})}{\underset{\mathfrak{Z}_0}{\lesssim}}
$$
$$
\lesssim 2^{-\mu_*k_*t}u_*(2^{k_*t})\cdot 2^{m_t\left(\frac
1q-\frac 1p\right)}+2^{-\mu_*k_*t} u_*(2^{k_*t})
\overline{\nu}_t^{\frac 1q-\frac 1p}
\stackrel{(\ref{mt_def})}{\underset{\mathfrak{Z}_0}{ \lesssim}}
2^{-\mu_*k_*t}u_*(2^{k_*t}) \overline{\nu}_t^{\frac 1q-\frac 1p}.
$$
Thus,
\begin{align}
\label{fff} \|P_{t,m+1}f-P_{t,m}f\|_{Y_q(G_{m,t})}
\underset{\mathfrak{Z}_0}{\lesssim} 2^{-\mu_*k_*t} u_*(2^{k_*t})
\cdot \overline{\nu}_t^{\delta_*} \cdot 2^{m\left(-\delta_*+\frac
1q-\frac 1p\right)},
\end{align}
\begin{align}
\label{ggg} \|P_{t,m_t+1}f-Q_tf\|_{Y_q(G_t)}
\underset{\mathfrak{Z}_0}{\lesssim} 2^{-\mu_*k_*t}u_*(2^{k_*t})
\overline{\nu}_t^{\frac 1q-\frac 1p}.
\end{align}

From (\ref{stm}) we get that $\overline{s}_{t,m_t+1}=\lceil
C_1\cdot 2^{m_t+1}\rceil \stackrel{(\ref{mt_def})}{\le} \lceil
4C_1\overline{\nu}_t\rceil$. Similarly as in Lemma
\ref{sum_qt_est} we prove that
\begin{align}
\label{nu_sum}\begin{array}{c} \sum \limits _{t=t_0}^{t_*(n)-1}
2^{-\mu_*k_*t}u_*(2^{k_*t}) \overline{\nu}_t^{\frac 1q-\frac 1p}
e_{k_t}(I_{\tilde{s}_{t,m_t+1}}: l_q^{\tilde{s}_{t,m_t+1}}
\rightarrow l_q^{\tilde{s}_{t,m_t+1}})
\stackrel{(\ref{tstm}),(\ref{stm})}{\le}
\\
\le \sum \limits _{t=t_0}^{t_*(n)-1} 2^{-\mu_*k_*t}u_*(2^{k_*t})
\overline{\nu}_t^{\frac 1q-\frac 1p}
e_{k_t}(I_{\overline{s}_{t,m_t+1}}: l_q^{\overline{s}_{t,m_t+1}}
\rightarrow l_q^{\overline{s}_{t,m_t+1}})
\underset{\mathfrak{Z}_0}{ \lesssim}\\ \lesssim
n^{-\mu_*\beta_*+\frac 1q-\frac 1p} \varphi_*^{-\mu_*}(n)
u_*(n^{\beta_*}\varphi_*(n)).
\end{array}
\end{align}

Let $\varepsilon>0$ be sufficiently small. From Theorem
\ref{shutt_trm} it follows that for $m\le m_t^*$
$$
e_{k_{t,m}}(I_{s_{t,m}}: l_q^{s_{t,m}} \rightarrow l_q^{s_{t,m}})
\stackrel{(\ref{tstm}),(\ref{stm})}{\le}
e_{k_{t,m}}(I_{\overline{s}_{t,m}}: l_q^{\overline{s}_{t,m}}
\rightarrow l_q^{\overline{s}_{t,m}})
\stackrel{(\ref{stm}),(\ref{mtst})}{\underset{p,q}{\lesssim}}
2^{-\frac{k_{t,m}} {\overline{s}_{t,m}}}.
$$
In addition, there exists $l_*=l_*(\mathfrak{Z}_0)$ such that for
any $t_0\le t<t_*(n)$, $0\le r<l_*$ the sequence
$\left\{\frac{k_{t,l_*m+r}}{\overline{s}_{t,l_*m+r}}\right\}_{l_*m+r\le
m_t^*}$ decreases not slower than some geometric progression. If
$m>m_t^*$, then
$$e_{k_{t,m}}(I_{s_{t,m}}: l_q^{s_{t,m}} \rightarrow
l_q^{s_{t,m}})\stackrel{(\ref{tstm}),(\ref{stm})}{\le}
e_{k_{t,m}}(I_{\overline{s}_{t,m}}: l_q^{\overline{s}_{t,m}}
\rightarrow l_q^{\overline{s}_{t,m}}) \underset{p,q}{\lesssim}1.$$
By Remark \ref{1511}, (\ref{nu_t_k}) holds. Hence,
$$
S_1 \stackrel{(\ref{slimtt0}),(\ref{fff}),
(\ref{ggg})}{\underset{\mathfrak{Z}_0}{\lesssim}} \sum \limits
_{t=t_0}^{t_*(n)-1} \sum \limits _{m=m_t+1}^\infty 2^{-\mu_*k_*t}
u_*(2^{k_*t}) \cdot \overline{\nu}_t^{\delta_*} \cdot
2^{m\left(-\delta_*+\frac 1q-\frac
1p\right)}e_{k_{t,m}}(I_{s_{t,m}}: l_q^{s_{t,m}} \rightarrow
l_q^{s_{t,m}}) +
$$
$$
+\sum \limits _{t=t_0}^{t_*(n)-1} 2^{-\mu_*k_*t}u_*(2^{k_*t})
\overline{\nu}_t^{\frac 1q-\frac 1p}
e_{k_{t}}(I_{\tilde{s}_{t,m_t+1}}:
l^{\tilde{s}_{t,m_t+1}}_{\sigma_{p,q}} \rightarrow
l_q^{\tilde{s}_{t,m_t+1}}) \stackrel{(\ref{delpq}),
(\ref{nu_sum})}{\underset{\mathfrak{Z}_0,\varepsilon}{\lesssim}}
$$
$$
\lesssim\sum \limits _{t=t_0}^{t_*(n)-1} 2^{-\mu_*k_*t}
u_*(2^{k_*t}) \cdot \overline{\nu}_t^{\delta_*} \cdot
2^{m_t^*\left(-\delta_*+\frac 1q-\frac
1p\right)}+n^{-\mu_*\beta_*+\frac 1q-\frac 1p}
\varphi_*^{-\mu_*}(n) u_*(n^{\beta_*}\varphi_*(n))
\stackrel{(\ref{nu_t_k}),(\ref{mtst})}{\underset{\mathfrak{Z}_0,\varepsilon}
{\lesssim}}
$$
$$
\lesssim \sum \limits _{t=t_0}^{t_*(n)-1} 2^{-\mu_*k_*t}
u_*(2^{k_*t}) \cdot 2^{\delta_*\gamma_*k_*t}
\psi_*^{\delta_*}(2^{k_*t}) \cdot \left(n\cdot
2^{-\varepsilon|t-t_1(n)|}\right)^{-\delta_*+\frac 1q-\frac 1p}+
$$
$$
+n^{-\mu_*\beta_*+\frac 1q-\frac 1p} \varphi_*^{-\mu_*}(n)
u_*(n^{\beta_*}\varphi_*(n))=:S_1'.
$$
Recall that $\beta_*=\frac{1}{\gamma_*}$. If
$\delta_*<\mu_*\beta_*$, then for sufficiently small
$\varepsilon>0$ we have
$S_1'\stackrel{(\ref{t11})}{\underset{\mathfrak{Z}_0} {\lesssim}}
n^{-\delta_*+\frac 1q-\frac 1p}$. If $\delta_*>\mu_*\beta_*$, then
for sufficiently small $\varepsilon>0$ we get
$$
S_1'\stackrel{(\ref{t11})}{\underset{\mathfrak{Z}_0} {\lesssim}}
2^{-\mu_*k_*t_*(n)} u_*(2^{k_*t_*(n)})
\overline{\nu}_{t_*(n)}^{\delta_*}\cdot n^{-\delta_*+\frac
1q-\frac 1p} +n^{-\mu_*\beta_*+\frac 1q-\frac 1p}
\varphi_*^{-\mu_*}(n) u_*(n^{\beta_*}\varphi_*(n))
\stackrel{(\ref{nu_t_est1})}{\underset{\mathfrak{Z}_0} {\lesssim}}
$$
$$
\lesssim n^{-\mu_*\beta_*+\frac 1q-\frac 1p}\sigma_*(n).
$$

This completes the proof for $p>q$.

{\bf Case $p\le q$.} Let $f\in B\hat X_p(\Omega)$. Then for $m\le
m_t$
\begin{align}
\label{ptm1fplq} \|P_{t,m+1}f-P_{t,m}f\|_{p,q,\tilde T_{t,m}}
\stackrel{(\ref{fpttm1})}{\underset{\mathfrak{Z}_0}{\lesssim}}
2^{-\lambda_*k_*t} u_*(2^{k_*t})\le 2^{-\lambda_*k_*t}
u_*(2^{k_*t})\cdot 2^{-\delta_*(m-m_t)},
\end{align}
and for $m>m_t$
\begin{align}
\label{ptmtil} \|P_{t,m+1}f-P_{t,m}f\|_{p,q,\tilde T_{t,m}}
\stackrel{(\ref{mu_ge_lambda}),(\ref{fpttm2})}{\underset{\mathfrak{Z}_0}{\lesssim}}
2^{-\lambda_*k_*t} u_*(2^{k_*t})\cdot 2^{-\delta_*(m-m_t)}.
\end{align}
By Theorem \ref{shutt_trm}, we have for $m\le m^*_t$
$$
e_{k_{t,m}}(I_{s_{t,m}}:l_p^{s_{t,m}} \rightarrow l_q^{s_{t,m}})
\stackrel{(\ref{tstm}),(\ref{stm})}{\le}
e_{k_{t,m}}(I_{\overline{s}_{t,m}}: l_p^{\overline{s}_{t,m}}
\rightarrow l_q^{\overline{s}_{t,m}})
\stackrel{(\ref{stm}),(\ref{mtst})}{ \underset{p,q}{\lesssim}}
2^{-\frac{k_{t,m}}{\overline{s}_{t,m}}} \overline{s}_{t,m}^{\frac
1q-\frac 1p}
$$
(moreover, there exists $l_*=l_*(\mathfrak{Z}_0)\in \N$ such that
for any $0\le r< l_*$, $t_0\le t<t_*(n)$ the sequence
$\left\{\frac{k_{t,l_*m+r}}{\overline{s}_{t,l_*m+r}}\right\}_{0\le
l_*m+r\le m_t^*}$ decreases not slower than some geometric
progression), and for $m> m_t^*$ we have
$$
e_{k_{t,m}}(I_{s_{t,m}}:l_p^{s_{t,m}} \rightarrow l_q^{s_{t,m}})
\underset{p,q}{\lesssim} \min \left\{\frac{\log
\left(1+\frac{\overline{s}_{t,m}}{k_{t,m}}\right)}{k_{t,m}}, \,
1\right\}^{\frac 1p-\frac 1q}.
$$
This implies that
$$
S_2 \stackrel{(\ref{slimtt01}),(\ref{ptm1fplq}),
(\ref{ptmtil})}{\underset {\mathfrak{Z}_0} {\lesssim}} \sum
\limits _{t=t_0}^{t_*(n)-1} \sum \limits_{m=0}^{m^*_t}
2^{-\lambda_*k_*t} u_*(2^{k_*t})\cdot 2^{-\delta_*(m-m_t)}
2^{-\frac{k_{t,m}}{\overline{s}_{t,m}}} \overline{s}_{t,m}^{\frac
1q-\frac 1p}+
$$
$$
+\sum \limits _{t=t_0}^{t_*(n)-1} \sum \limits_{m=m^*_t+1}^\infty
2^{-\lambda_*k_*t} u_*(2^{k_*t})\cdot 2^{-\delta_*(m-m_t)} \min
\left\{\frac{\log
\left(1+\frac{\overline{s}_{t,m}}{k_{t,m}}\right)}{k_{t,m}}, \,
1\right\}^{\frac 1p-\frac 1q}
\stackrel{(\ref{mt_def}),(\ref{stm}),
(\ref{mtst})}{\underset{\mathfrak{Z}_0}{\lesssim}}
$$
$$
\lesssim\sum \limits _{t=t_0}^{t_*(n)-1} 2^{-\lambda_*k_*t}
u_*(2^{k_*t}) \cdot 2^{-\delta_*m_t^*}
\overline{\nu}_t^{\delta_*}k_{t,m_t^*}^{\frac 1q-\frac 1p}
\stackrel{(\ref{mtst})}{\underset{\mathfrak{Z}_0}{\lesssim}}
$$
$$
\lesssim \sum \limits _{t=t_0}^{t_*(n)-1} 2^{-\lambda_*k_*t}
u_*(2^{k_*t})(n\cdot 2^{-\varepsilon|t-t_1(n)|})^{-\delta_*+\frac
1q-\frac 1p}\overline{\nu}_t^{\delta_*}=:S'_2.
$$
If (\ref{nu_t_k}) holds, then $\overline{\nu}_t=
2^{\gamma_*k_*t}\psi_*(2^{k_*t})$. Hence, for
$\delta_*<\lambda_*\beta_*$ we have $S_2'
\stackrel{(\ref{t12})}{\underset{\mathfrak{Z}_0}{\lesssim}}
n^{-\delta_*+\frac 1q-\frac 1p}$, and for
$\delta_*>\lambda_*\beta_*$ we get
$$
S_2'\stackrel{(\ref{t12})}{\underset{\mathfrak{Z}_0}{\lesssim}}
2^{-\lambda_*k_*t_*(n)} u_*(2^{k_*t_*(n)}) n^{-\delta_* +\frac
1q-\frac 1p} \overline{\nu}_{t_*(n)}^{\delta_*}
\stackrel{(\ref{nu_t_est1})}{\underset{\mathfrak{Z}_0}{\lesssim}}
n^{-\lambda_*\beta_*+\frac 1q-\frac 1p} \sigma_*(n).
$$
If (\ref{nu_t_k1}) holds, then $\overline{\nu}_t=2^{\gamma_*2^t}
\psi_*(2^{2^t})$, $k_*=1$ and
$$
S_2'\stackrel{(\ref{t_st_n}),(\ref{nu_t_est2}),(\ref{t13})}{\underset{\mathfrak{Z}_0}{\lesssim}}
n^{\frac 1q-\frac 1p} (\log n)^{-\lambda_*} u_*(\log n)\le
n^{\frac 1q-\frac 1p} (\log n)^{-\lambda_*+\frac 1p-\frac 1q}
u_*(\log n).
$$
This completes the proof.
\end{proof}
Theorems \ref{trm1} and \ref{trm2} for $p\ge q$ follow from
(\ref{eklspt}), (\ref{t_st_st_n}), (\ref{f_q_t}), (\ref{f_p_t_m}),
(\ref{f_p_t_m1}) and Lemmas \ref{qt_st_n}, \ref{sum_qt_est},
\ref{ptm_sum}.

Consider the case $p<q$.

We set
\begin{align}
\label{t2n} t_2(n)=\left\{ \begin{array}{l} t_*(n), \quad \text{if
(\ref{nu_t_k}) holds, or if (\ref{nu_t_k1}) holds and
}\lambda_*>\frac 1p-\frac 1q,
\\ t_{**}(n), \quad \text{if (\ref{nu_t_k1}) holds and }
\lambda_*<\frac 1p-\frac 1q.\end{array} \right.
\end{align}

\begin{Lem}
\label{t_g_tstn} Let $p<q$. We set
\begin{align}
\label{ktm_def} k_{t,m} =\lceil n\cdot 2^{-\varepsilon|t-t_2(n)|
-\varepsilon(m-m_t)}\rceil, \quad t_*(n)\le t<t_{**}(n), \quad
m\ge m_t.
\end{align}
Then
\begin{align}
\label{sum_ktm} \sum \limits _{t_*(n)\le t<t_{**}(n),\, m\ge m_t}
(k_{t,m}-1) \underset{\mathfrak{Z}_0,\varepsilon} {\lesssim} n
\end{align}
and for sufficiently small $\varepsilon>0$ the following
assertions hold.
\begin{enumerate}
\item Let (\ref{nu_t_k}) hold. Then
$$
\sum \limits _{t=t_*(n)}^{t_{**}(n)-1} \sum \limits
_{m=m_t}^\infty e_{k_{t,m}}(P_{t,m+1}-P_{t,m}:\hat X_p(\Omega)
\rightarrow Y_q(G_t)) \underset{\mathfrak{Z}_0}{\lesssim}
n^{-\lambda_*\beta_*+\frac 1q-\frac 1p}
\varphi_*^{-\lambda_*}(n)u_*(n^{\beta_*} \varphi_*(n)).
$$
\item Let (\ref{nu_t_k1}) hold. Then
$$
\sum \limits _{t=t_*(n)}^{t_{**}(n)-1} \sum \limits
_{m=m_t}^\infty e_{k_{t,m}}(P_{t,m+1}-P_{t,m}:\hat X_p(\Omega)
\rightarrow Y_q(G_t)) \underset{\mathfrak{Z}_0}{\lesssim} n^{\frac
1q-\frac 1p}(\log n)^{-\lambda_*+\frac 1p-\frac 1q} u_*(\log n)
$$
for $\lambda_*>\frac 1p-\frac 1q$,
$$
\sum \limits _{t=t_*(n)}^{t_{**}(n)-1} \sum \limits
_{m=m_t}^\infty e_{k_{t,m}}(P_{t,m+1}-P_{t,m}:\hat X_p(\Omega)
\rightarrow Y_q(G_t)) \underset{\mathfrak{Z}_0}{\lesssim}
n^{-\lambda_*} u_*(n)
$$
for $\lambda_*<\frac 1p-\frac 1q$.
\end{enumerate}
\end{Lem}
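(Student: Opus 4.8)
The plan is to carry the argument of Lemma~\ref{ptm_sum} for $p\le q$ over to the range $t_*(n)\le t<t_{**}(n)$, $m\ge m_t$, where the telescoping tail in $m$ survives because $p<q$. The cardinality bound (\ref{sum_ktm}) is immediate from (\ref{ktm_def}): since $k_{t,m}-1\le n\,2^{-\varepsilon|t-t_2(n)|-\varepsilon(m-m_t)}$, summing the two geometric series in $|t-t_2(n)|$ and in $m-m_t$ gives $\sum(k_{t,m}-1)\underset{\mathfrak{Z}_0,\varepsilon}{\lesssim}n$. For the entropy estimate I would first reduce, exactly as in Lemma~\ref{ptm_sum}, via (\ref{mult_n}), (\ref{tstm}) and the isomorphisms $A_{t,m}$ of (\ref{atm_m1}), to bounding $\sum_{t=t_*(n)}^{t_{**}(n)-1}\sum_{m=m_t}^{\infty}\|P_{t,m+1}-P_{t,m}\|_{\hat X_p(\Omega)\to Y_{p,q,\tilde T_{t,m}}(\Omega)}\,e_{k_{t,m}}(I_{s_{t,m}}:l_p^{s_{t,m}}\to l_q^{s_{t,m}})$ with $s_{t,m}\le C_1\cdot 2^m$; and, exactly as in (\ref{ptm1fplq}) (for $m=m_t$) and (\ref{ptmtil}) (for $m>m_t$), for $f\in B\hat X_p(\Omega)$ I get $\|P_{t,m+1}f-P_{t,m}f\|_{p,q,\tilde T_{t,m}}\underset{\mathfrak{Z}_0}{\lesssim}2^{-\lambda_*k_*t}u_*(2^{k_*t})\,2^{-\delta_*(m-m_t)}$ for $m\ge m_t$, hence the same bound for the operator norm.

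Next I would estimate the finite-dimensional entropy numbers by Theorem~\ref{shutt_trm}. Since $2^{m_t}\asymp\overline{\nu}_t$ and, for $t\ge t_*(n)$, $\overline{\nu}_t\ge\overline{\nu}_{t_*(n)}\asymp n\ge k_{t,m}$ by (\ref{nu_t_est1}), we stay in the regime $k_{t,m}\lesssim s_{t,m}$, so $e_{k_{t,m}}(I_{s_{t,m}}:l_p^{s_{t,m}}\to l_q^{s_{t,m}})\lesssim\min\bigl\{1,\bigl(k_{t,m}^{-1}\log(1+s_{t,m}/k_{t,m})\bigr)^{1/p-1/q}\bigr\}$, with $\log(1+s_{t,m}/k_{t,m})\lesssim\log(\overline{\nu}_t/\overline{\nu}_{t_*(n)})+(m-m_t)+1$ by (\ref{nu_t_est1}) and Lemma~\ref{sum_lem}. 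Because $k_{t,m}\ge n\,2^{-\varepsilon|t-t_2(n)|-\varepsilon(m-m_t)}$, for $\varepsilon$ small the factor $2^{\varepsilon(1/p-1/q)(m-m_t)}$ is absorbed by $2^{-\delta_*(m-m_t)}$, so for each fixed $t$ the sum over $m\ge m_t$ is controlled by its $m=m_t$ term; this yields a bound $\underset{\mathfrak{Z}_0}{\lesssim}\sum_{t=t_*(n)}^{t_{**}(n)-1}2^{-\lambda_*k_*t}u_*(2^{k_*t})\,n^{1/q-1/p}\,2^{\varepsilon(1/p-1/q)|t-t_2(n)|}\bigl(1+\log(\overline{\nu}_t/\overline{\nu}_{t_*(n)})\bigr)^{1/p-1/q}$.

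It then remains to carry out the $t$-sum. Under (\ref{nu_t_k}) we have $t_2(n)=t_*(n)$ and $\log(\overline{\nu}_t/\overline{\nu}_{t_*(n)})\lesssim k_*(t-t_*(n))$, while by (\ref{2l}) and Lemma~\ref{sum_lem} the factor $2^{-\lambda_*k_*t}u_*(2^{k_*t})$ is, up to $2^{\varepsilon'k_*(t-t_*(n))}$ with $\varepsilon'$ arbitrarily small, at most $2^{-\lambda_*k_*t_*(n)}u_*(2^{k_*t_*(n)})\,2^{-\lambda_*k_*(t-t_*(n))}$; choosing $\varepsilon,\varepsilon'$ small, every remaining factor of the $t$-series past the term with $t\asymp t_*(n)$ forms a convergent geometric progression in $t-t_*(n)$, so the sum is $\asymp$ its leading term, which by (\ref{nu_t_est1}) equals $n^{-\lambda_*\beta_*+\frac1q-\frac1p}\varphi_*^{-\lambda_*}(n)u_*(n^{\beta_*}\varphi_*(n))$; this is the first assertion. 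Under (\ref{nu_t_k1}) we have $k_*=1$ and, by (\ref{nu_t_est2}), $2^{t_*(n)}\asymp\log n$, $2^{t_{**}(n)}\asymp n$; here $\log(\overline{\nu}_t/\overline{\nu}_{t_*(n)})\asymp 2^t$ and, up to sub-polynomial factors, the $t$-summand has the shape $2^{t(\frac1p-\frac1q-\lambda_*)}u_*(2^t)\,2^{\varepsilon(\frac1p-\frac1q)|t-t_2(n)|}\,n^{\frac1q-\frac1p}$. For $\lambda_*>\frac1p-\frac1q$ I would take $t_2(n)=t_*(n)$: the geometric factor decays, the sum is governed by $t\asymp t_*(n)$ and equals $\asymp n^{\frac1q-\frac1p}(\log n)^{-\lambda_*+\frac1p-\frac1q}u_*(\log n)$. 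For $\lambda_*<\frac1p-\frac1q$ I would take $t_2(n)=t_{**}(n)$: the geometric factor grows, the sum is governed by $t\asymp t_{**}(n)$ and equals $\asymp n^{-\lambda_*}u_*(n)$. This is the second assertion.

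The main obstacle is the bookkeeping in these last two steps: for each pair $(t,m)$ one must decide which of the three branches of Theorem~\ref{shutt_trm} applies, and the thresholds $k_{t,m}$ relative to $\log s_{t,m}$ and to $s_{t,m}$ move with $t$, $m$ and $n$; and one must fix a single $\varepsilon=\varepsilon(\mathfrak{Z}_0)>0$ (hence also the implicit $\varepsilon'$) for which simultaneously all auxiliary geometric series in $m$ and in $t$ converge and the extremal term ($t\asymp t_*(n)$, or $t\asymp t_{**}(n)$ in the last subcase) stays dominant. Everything else is routine manipulation with (\ref{2l}), Lemma~\ref{sum_lem}, (\ref{nu_t_est1}) and (\ref{nu_t_est2}).
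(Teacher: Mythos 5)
Your proposal follows essentially the same route as the paper: establish (\ref{sum_ktm}) by summing the two geometric progressions, pass via (\ref{mult_n}) and the isomorphisms of Lemma \ref{oper_a} to a sum of operator-norm bounds times finite-dimensional entropy numbers, bound the operator norms by $2^{-\lambda_*k_*t}u_*(2^{k_*t})\,2^{-\delta_*(m-m_t)}$ using (\ref{mu_ge_lambda}), (\ref{fpttm1}), (\ref{fpttm2}), then observe $\overline{s}_{t,m}\gtrsim\overline{\nu}_{t_*(n)}\gtrsim n\ge k_{t,m}$ so that Theorem \ref{shutt_trm} gives the $\min\{k^{-1}\log(1+s/k),1\}^{1/p-1/q}$ bound, absorb the $m$-sum into its $m=m_t$ term for small $\varepsilon$, and finally carry out the $t$-sum with $t_2(n)$ placed at $t_*(n)$ or $t_{**}(n)$ according to the sign of $\lambda_*-(\frac 1p-\frac 1q)$, using (\ref{2l}), Lemma \ref{sum_lem}, (\ref{nu_t_est1}), (\ref{nu_t_est2}). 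The only cosmetic difference is that the paper works with the refined partitions $\hat T_{t,m}$ of (\ref{hattm}) (on which $P_{t,m+1}f-P_{t,m}f$ is actually piecewise polynomial), while you write $\tilde T_{t,m}$; but since you track $s_{t,m}\le C_1\cdot 2^m$ and the norms $\|\cdot\|_{p,q,\hat T_{t,m}}$ and $\|\cdot\|_{p,q,\tilde T_{t,m}}$ are comparable by (\ref{card_et}) and H\"older, this is a notational slip rather than a gap.
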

\begin{proof}
The relation (\ref{sum_ktm}) follows from (\ref{ktm_def}).

Let $\hat T_{t,m}$, $s_{t,m}$ be defined by (\ref{hattm}),
(\ref{stmdef}). From (\ref{tstm}) it follows that $s_{t,m}\le
C_1\cdot 2^m$. We set $\overline{s}_{t,m}=\lceil C_1\cdot 2^m
\rceil$. Then
\begin{align}
\label{stmt} \overline{s}_{t,m_t} \stackrel{(\ref{mt_def})
}{\underset{\mathfrak{Z}_0}{\asymp}} \overline{\nu}_t.
\end{align}

By (\ref{mult_n}) and (\ref{atm_m1}), it suffices to estimate
$$
\sum \limits _{t=t_*(n)}^{t_{**}(n)-1} \sum \limits
_{m=m_t}^\infty \|P_{t,m+1} -P_{t,m}\| _{\hat X_p(\Omega)
\rightarrow Y_{p,q,\hat
T_{t,m}}(G_t)}e_{k_{t,m}}(I_{\overline{s}_{t,m}}:l_p^{\overline{s}_{t,m}}\rightarrow
l_q^{\overline{s}_{t,m}})=:S.
$$
From (\ref{mu_ge_lambda}), (\ref{fpttm1}), (\ref{fpttm2}) it
follows that
$$
\|P_{t,m+1} -P_{t,m}\| _{\hat X_p(\Omega) \rightarrow Y_{p,q,\hat
T_{t,m}}(G_t)} \underset{\mathfrak{Z}_0}{\lesssim}
2^{-\lambda_*k_*t} u_*(2^{k_*t}) \cdot 2^{-\delta_*(m-m_t)}.
$$
Since $\overline{s}_{t,m}\stackrel{(\ref{stm})}{\ge} C_1\cdot
2^{m_t}\stackrel{(\ref{mt_def})}{\underset{\mathfrak{Z}_0}{\gtrsim}}
\overline{\nu}_t \stackrel{(\ref{nu_t_k}),
(\ref{nu_t_k1})}{\underset{\mathfrak{Z}_0}{\gtrsim}}
\overline{\nu}_{t_*(n)} \stackrel{(\ref{t_st_n})}{\ge} n$ and
$k_{t,m} \stackrel{(\ref{ktm_def})}{\le} n$, we get by Theorem
\ref{shutt_trm} that
$$
e_{k_{t,m}}(I_{\overline{s}_{t,m}}:l_p^{\overline{s}_{t,m}}\rightarrow
l_q^{\overline{s}_{t,m}})\underset{\mathfrak{Z}_0}{\lesssim} \min
\left\{\frac{\log
\left(1+\frac{\overline{s}_{t,m}}{k_{t,m}}\right)}{k_{t,m}}, \,
1\right\}^{\frac 1p-\frac 1q}.
$$
Hence, for sufficiently small $\varepsilon>0$ we have
$$
S\underset{\mathfrak{Z}_0}{\lesssim} \sum \limits
_{t=t_*(n)}^{t_{**}(n)-1} 2^{-\lambda_*k_*t} u_*(2^{k_*t})
\left\{\frac{\log
\left(1+\frac{\overline{s}_{t,m_t}}{k_{t,m_t}}\right)}{k_{t,m_t}}\right\}^{\frac
1p-\frac
1q}\stackrel{(\ref{ktm_def}),(\ref{stmt})}{\underset{\mathfrak{Z}_0}{\lesssim}}
$$
$$
\lesssim \sum \limits _{t=t_*(n)}^{t_{**}(n)-1} 2^{-\lambda_*k_*t}
u_*(2^{k_*t}) \cdot n^{\frac 1q-\frac 1p} \cdot
2^{\varepsilon|t-t_2(n)|\left(\frac 1p-\frac 1q\right)} \left(\log
\left( 1+\frac{\overline{\nu}_t}{\lceil n\cdot
2^{-\varepsilon|t-t_2(n)|}\rceil}\right)\right)^{\frac 1p-\frac
1q}.
$$
This together with (\ref{t_st_n}), (\ref{nu_t_est1}),
(\ref{nu_t_est2}), (\ref{t2n}) yields the desired estimates for
sufficiently small $\varepsilon>0$.
\end{proof}

\begin{Lem}
\label{entr_pmtqt} Let $p<q$. Then there exists a sequence
$\{\tilde k_t\}_{t_*(n)\le t<t_{**}(n)}\subset \N$ such that
\begin{align}
\label{stkt1} \sum \limits _{t=t_*(n)}^{t_{**}(n)-1} (\tilde
k_t-1) \underset{\mathfrak{Z}_0}{\lesssim} n
\end{align}
and the following assertions hold.
\begin{enumerate}
\item Let (\ref{nu_t_k}) hold. Then
$$
\sum \limits _{t=t_*(n)}^{t_{**}(n)-1} e_{\tilde
k_t}(P_{t,m_t}-Q_t:\hat X_p(\Omega) \rightarrow Y_q(G_t))
\underset {\mathfrak{Z}_0}{\lesssim} n^{-\lambda_*\beta_*+\frac
1q-\frac 1p} u_*(n^{\beta_*} \varphi_*(n))
\varphi_*^{-\lambda_*}(n).
$$
\item Let (\ref{nu_t_k1}) hold. Then for $\lambda_* >\frac 1p-\frac
1q$ we have
$$
\sum \limits _{t=t_*(n)}^{t_{**}(n)-1} e_{\tilde
k_t}(P_{t,m_t}-Q_t:\hat X_p(\Omega) \rightarrow Y_q(G_t))
\underset {\mathfrak{Z}_0}{\lesssim} n^{\frac 1q-\frac 1p} (\log
n)^{-\lambda_*+\frac 1p-\frac 1q} u_*(\log n),
$$
and for $\lambda_*<\frac 1p-\frac 1q$ we have
$$
\sum \limits _{t=t_*(n)}^{t_{**}(n)-1} e_{\tilde
k_t}(P_{t,m_t}-Q_t:\hat X_p(\Omega) \rightarrow Y_q(G_t))
\underset {\mathfrak{Z}_0}{\lesssim} n^{-\lambda_*} u_*(n).
$$
\end{enumerate}
\end{Lem}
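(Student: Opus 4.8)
The plan is to follow the scheme of Lemmas~\ref{sum_qt_est} and~\ref{t_g_tstn}, the latter being the closest analogue. Recall that $p<q$, so $\sigma_{p,q}=p$. For $t_*(n)\le t<t_{**}(n)$ the operator $P_{t,m_t}-Q_t$ maps $\hat X_p(\Omega)$ into the space of functions that are polynomial on each cell of the partition $\tilde T_{t,m_t}=\{\hat F(\xi)\}_{\xi\in{\bf V}(\Gamma_t)}$ of $G_t=G_{m_t,t}$ and vanish off $G_t$; by (\ref{cttm}) and (\ref{mt_def}) this space has dimension $\underset{\mathfrak{Z}_0}{\lesssim}\overline\nu_t$. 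Writing $P_{t,m_t}f-Q_tf=(f-Q_tf)-(f-P_{t,m_t}f)$ and applying the triangle inequality together with (\ref{fpttm1}) (with $m=m_t$) for the term $f-P_{t,m_t}f$ and with (\ref{pp3_11}), (\ref{fqtf}) for the term $f-Q_tf$, exactly as (\ref{ggg}) and (\ref{qqt1}) were obtained, one gets for $f\in B\hat X_p(\Omega)$ the estimate $\|(P_{t,m_t}-Q_t)f\|_{Y_{p,q,\tilde T_{t,m_t}}(G_t)}\underset{\mathfrak{Z}_0}{\lesssim}2^{-\lambda_*k_*t}u_*(2^{k_*t})$, i.e. $\|P_{t,m_t}-Q_t\|_{\hat X_p(\Omega)\to Y_{p,q,\tilde T_{t,m_t}}(G_t)}\underset{\mathfrak{Z}_0}{\lesssim}2^{-\lambda_*k_*t}u_*(2^{k_*t})$.

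Next I would invoke Lemma~\ref{oper_a} for the partition $\tilde T_{t,m_t}$: there is a linear isomorphism $A_t$ of ${\cal S}_{\tilde T_{t,m_t}}(\Omega)$ onto $\R^{\overline s_{t,m_t}}$, where $\overline s_{t,m_t}:=\lceil C_1 2^{m_t}\rceil\underset{\mathfrak{Z}_0}{\asymp}\overline\nu_t$ (cf. (\ref{stm}), (\ref{mt_def})), with $\|A_t\|_{Y_{p,q,\tilde T_{t,m_t}}(G_t)\to l_p^{\overline s_{t,m_t}}}\underset{\mathfrak{Z}_0}{\lesssim}1$ and $\|A_t^{-1}\|_{l_q^{\overline s_{t,m_t}}\to Y_q(G_t)}\underset{\mathfrak{Z}_0}{\lesssim}1$. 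Combining this with (\ref{mult_n}) and the previous paragraph yields $e_{\tilde k_t}(P_{t,m_t}-Q_t:\hat X_p(\Omega)\to Y_q(G_t))\underset{\mathfrak{Z}_0}{\lesssim}2^{-\lambda_*k_*t}u_*(2^{k_*t})\,e_{\tilde k_t}(I_{\overline s_{t,m_t}}:l_p^{\overline s_{t,m_t}}\to l_q^{\overline s_{t,m_t}})$ for any $\tilde k_t\in\N$. I would then take, just as in (\ref{ktm_def}) of Lemma~\ref{t_g_tstn}, the sequence $\tilde k_t=\lceil n\cdot 2^{-\varepsilon|t-t_2(n)|}\rceil$ with $t_2(n)$ as in (\ref{t2n}) and $\varepsilon>0$ fixed small in terms of $\mathfrak{Z}_0$; then $\sum_{t=t_*(n)}^{t_{**}(n)-1}(\tilde k_t-1)\underset{\mathfrak{Z}_0,\varepsilon}{\lesssim}n$, which is (\ref{stkt1}). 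Since $\overline s_{t,m_t}\underset{\mathfrak{Z}_0}{\asymp}\overline\nu_t\ge\overline\nu_{t_*(n)}\ge n\ge\tilde k_t$ (using (\ref{nu_t_k}), (\ref{nu_t_k1}), (\ref{t_st_n}) and the eventual monotonicity of $\overline\nu_t$), Theorem~\ref{shutt_trm} gives $e_{\tilde k_t}(I_{\overline s_{t,m_t}}:l_p^{\overline s_{t,m_t}}\to l_q^{\overline s_{t,m_t}})\underset{\mathfrak{Z}_0}{\lesssim}\min\bigl\{\frac{\log(1+\overline\nu_t/\tilde k_t)}{\tilde k_t},\,1\bigr\}^{\frac1p-\frac1q}$.

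Finally, summing over $t$, $\sum_{t=t_*(n)}^{t_{**}(n)-1}e_{\tilde k_t}(P_{t,m_t}-Q_t)\underset{\mathfrak{Z}_0}{\lesssim}\sum_{t=t_*(n)}^{t_{**}(n)-1}2^{-\lambda_*k_*t}u_*(2^{k_*t})\min\bigl\{\frac{\log(1+\overline\nu_t/\tilde k_t)}{\tilde k_t},\,1\bigr\}^{\frac1p-\frac1q}$, which is precisely the sum estimated at the end of the proof of Lemma~\ref{t_g_tstn}: by Lemma~\ref{sum_lem}, (\ref{2l}), Proposition~\ref{nu_t_st} (relations (\ref{nu_t_est1}), (\ref{nu_t_est2})) and the definition (\ref{t2n}) of $t_2(n)$, for sufficiently small $\varepsilon>0$ the summand decays geometrically away from $t=t_2(n)$, so the sum is of the order of its term at $t=t_2(n)$. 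In case (\ref{nu_t_k}) (where $t_2(n)=t_*(n)$) this gives $n^{-\lambda_*\beta_*+\frac1q-\frac1p}u_*(n^{\beta_*}\varphi_*(n))\varphi_*^{-\lambda_*}(n)$; in case (\ref{nu_t_k1}) it gives $n^{\frac1q-\frac1p}(\log n)^{-\lambda_*+\frac1p-\frac1q}u_*(\log n)$ when $\lambda_*>\frac1p-\frac1q$ (here $t_2(n)=t_*(n)$) and $n^{-\lambda_*}u_*(n)$ when $\lambda_*<\frac1p-\frac1q$ (here $t_2(n)=t_{**}(n)$). I expect the main difficulty to be the first step: obtaining the clean norm estimate $\|P_{t,m_t}-Q_t\|_{\hat X_p(\Omega)\to Y_{p,q,\tilde T_{t,m_t}}(G_t)}\lesssim 2^{-\lambda_*k_*t}u_*(2^{k_*t})$, i.e. controlling the coarse‑scale operator $Q_t$ against the fine partition $\tilde T_{t,m_t}$ without losing a power of $\overline\nu_t$ (here one must use, as in \cite{vas_width_raspr}, that $Q_tf$ is a single polynomial on each $\Omega_{{\cal A}_{t,i}}$, together with the local estimates (\ref{fpttm1}) on the cells of $\tilde T_{t,m_t}$), and — to a lesser extent — the bookkeeping in the final summation: choosing $\varepsilon$ small enough that the geometric decay produced by $2^{-\lambda_*k_*t}u_*(2^{k_*t})$ and (\ref{2l}) dominates the growing factors $2^{\varepsilon(\frac1p-\frac1q)|t-t_2(n)|}$ and the logarithmic terms, and handling the range of $t$ in which $\tilde k_t$ drops below $\log\overline\nu_t$, where the $l_p\to l_q$ entropy numbers saturate at order $1$.
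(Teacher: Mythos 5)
Your proof hinges on the claimed norm estimate
$\|P_{t,m_t}-Q_t\|_{\hat X_p(\Omega)\to Y_{p,q,\tilde T_{t,m_t}}(G_t)}\lesssim 2^{-\lambda_*k_*t}u_*(2^{k_*t})$,
and this is exactly where the argument breaks. The estimate (\ref{pp3_11}) controls $f-Q_tf$ in the coarse mixed norm $\|\cdot\|_{p,q,T_{t-1}}$, built from the $\ell^p$-sum over the blocks $\tilde U_{t,i}$; you need it in the fine mixed norm $\|\cdot\|_{p,q,\tilde T_{t,m_t}}$, built from the $\ell^p$-sum over individual cells $\hat F(\xi)$. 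Since $p<q$ (so $\ell^p\hookrightarrow\ell^q$ with growing constant in the wrong direction), refining the partition inflates the mixed norm: for a function that is a single fixed polynomial on $\Omega_{{\cal A}_{t,i}}$ (which $Q_tf$ is), you have $\|Q_tf\|_{p,q,\tilde T_{t,m_t}(G_t\cap\tilde U_{t,i})}\asymp N_i^{\frac1p-\frac1q}\|Q_tf\|_{Y_q(G_t\cap\tilde U_{t,i})}$ where $N_i={\rm card}\,{\bf V}({\cal A}_{t,i})$, which may be as large as $\overline{\nu}_t$. So the bound you want costs an extra unbounded factor $\overline{\nu}_t^{\frac1p-\frac1q}$, not a constant. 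Note also that (\ref{fqtf}) and (\ref{ggg}), which you cite, were derived only for $p>q$ and do not apply here. This is not a minor bookkeeping issue: plugging in the corrected bound destroys the final sum (for $t$ near $t_{**}(n)$ one picks up $\overline{\nu}_t^{1/p-1/q}\approx 2^{n(1/p-1/q)}$, far too large).

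The paper does not attempt a fixed-partition operator-norm bound at all. Instead it approximates $P_{t,m_t}-Q_t$ by a data-dependent operator $\hat P_{\mathbf{T}_{f,t,0}}$ built from a partition $\mathbf{T}_{f,t,0}$ of $\Gamma_t$ into subtrees adapted to the input $f$ (chosen via Lemma~\ref{lemma_o_razb_dereva1} to equalize the $\ell^p$-mass $\Phi_f$ of $\|f\|_{X_p(\hat F(\xi))}^p$ across the pieces, with the root piece set to $0$), so that the residual $\|P_{t,m_t}f-Q_tf-\hat P_{\mathbf{T}_{f,t,0}}f\|_{Y_q(G_t)}$ is controlled by $2^{-\lambda_*k_*t}u_*(2^{k_*t})r_t^{1/q-1/p}$ for a chosen size $r_t\ll\overline{\nu}_t$. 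One then bounds the number $|{\cal N}'_t|$ of candidate partitions, estimates entropy numbers of the telescoping differences $\hat P_{\mathbf{T}_{t,l}}-\hat P_{\mathbf{T}_{t,l+1}}$ (which live on partitions of size $\sim r_t 2^{-l}$, not $\overline{\nu}_t$), and assembles everything via Lifshits' discretization theorem (Theorem~\ref{lifs_sta}), paying $[\log|{\cal N}'_t|]$ extra entropy indices. This adaptive-partition-plus-Lifshits mechanism is the essential idea missing from your proposal; the remainder of your summation bookkeeping is fine and is reused by the paper once this step is in place.
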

We shall use Theorem \ref{lifs_sta} and the following assertion.

\begin{Lem}
\label{lemma_o_razb_dereva1} {\rm (see  \cite{vas_john}).} Let
$({\cal T}, \, \xi_*)$ be a tree with finite vertex set, let
\begin{align}
\label{cardvvvk} {\rm card}\, {\bf V}_1(\xi)\le k, \;\; \xi\in
{\bf V}({\cal T}),
\end{align}
and let the mapping $\Phi :2^{{\bf V}(\cal T)}\rightarrow \R_+$
satisfy the following condition:
\begin{align}
\label{prop_psi} \Phi(V_1\cup V_2)\ge \Phi(V_1)+\Phi(V_2), \; V_1,
\, V_2\subset {\bf V}({\cal T}), \;\; V_1\cap V_2=\varnothing,
\end{align}
$\Phi({\bf V}({\cal T}))>0$. Then there is a number $C(k)>0$ such
that for any $n\in \N$ there exists a partition $\mathfrak{S}_n$
of the tree ${\cal T}$ into at most $C(k)n$ subtrees ${\cal T}_j$,
which satisfies the following conditions:
\begin{enumerate}
\item $\Phi({\bf V}({\cal T}_j))\le \frac{(k+2)\Phi({\bf V}({\cal T}))}{n}$
for any $j$ such that ${\rm card}\, {\bf V}({\cal T}_j)\ge 2$;
\item if $m\le 2n$, then each element of
$\mathfrak{S}_n$ intersects with at most $C(k)$ elements of
$\mathfrak{S}_m$.
\end{enumerate}

\end{Lem}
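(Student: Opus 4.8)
The assertion is purely combinatorial, and the plan is an explicit recursive construction producing a nested dyadic scale of partitions. First I would record that $\Phi$ is monotone: for $V_1\subset V_2\subset{\bf V}({\cal T})$, applying (\ref{prop_psi}) to $V_1$ and $V_2\setminus V_1$ and using $\Phi\ge 0$ gives $\Phi(V_1)\le\Phi(V_2)$. Put $W=\Phi({\bf V}({\cal T}))>0$. Given $\tau>0$, call a vertex $\xi$ of a subtree $S\subset{\cal T}$ \emph{$\tau$-heavy in $S$} if $\Phi({\bf V}(S)\cap{\bf V}({\cal T}_\xi))>\tau$; since $\xi'\le\xi$ implies ${\cal T}_\xi\subset{\cal T}_{\xi'}$, the $\tau$-heavy vertices of $S$ are either absent (exactly when $\Phi({\bf V}(S))\le\tau$) or they form a subtree of $S$ containing its root, and then any maximal $\tau$-heavy vertex has all of its children in $S$ being $\tau$-light.

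The key auxiliary step is a one-step splitting: if $S\subset{\cal T}$ is a finite subtree with $\Phi({\bf V}(S))>\tau$, then $S$ admits a partition into subtrees in which every non-singleton piece has $\Phi$-weight $\le\tau$ and the number of pieces does not exceed $(k+1)\lfloor\Phi({\bf V}(S))/\tau\rfloor+1$. To get this I would iterate, on the current subtree $S'$ (initially $S'=S$): pick a maximal $\tau$-heavy vertex $\xi$ of $S'$, put into the partition the singleton $\{\xi\}$ together with the subtrees ${\cal T}_\eta\cap S'$ over the children $\eta$ of $\xi$ in $S'$ — at most $k$ of them by (\ref{cardvvvk}), each $\tau$-light — and let the new $S'$ be $S'$ with the hanging subtree ${\cal T}_\xi\cap S'$ deleted (again a subtree, empty if $\xi$ was the root of $S'$). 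The process terminates since $S'$ strictly shrinks, and the last, possibly empty, remainder has no $\tau$-heavy vertex, hence $\Phi$-weight $\le\tau$, and is taken as one more piece. The vertex sets of the deleted hanging subtrees are pairwise disjoint and each has $\Phi$-weight $>\tau$, so by (\ref{prop_psi}) there are fewer than $\Phi({\bf V}(S))/\tau$ iterations; since each iteration contributes at most $k+1$ pieces and the remainder at most one, the count follows. In particular, if $\tau<\Phi({\bf V}(S))\le 2\tau$ there is exactly one iteration and at most $k+2$ pieces.

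Next I would build the dyadic family: set $\mathfrak{S}_1=\{{\cal T}\}$ and, inductively, obtain $\mathfrak{S}_{2^{i+1}}$ from $\mathfrak{S}_{2^i}$ by keeping every piece $S$ with $\Phi({\bf V}(S))\le W/2^{i+1}$ and replacing every piece with $W/2^{i+1}<\Phi({\bf V}(S))\le W/2^i$ by its one-step splitting at threshold $\tau=W/2^{i+1}$, which (by the previous step, since $\Phi({\bf V}(S))\le 2\tau$) yields at most $k+2$ subtrees, each non-singleton one of $\Phi$-weight $\le W/2^{i+1}$. Then $\mathfrak{S}_{2^{i+1}}$ refines $\mathfrak{S}_{2^i}$, every non-singleton piece of $\mathfrak{S}_{2^i}$ has $\Phi$-weight $\le W/2^i$, and each piece of $\mathfrak{S}_{2^i}$ is cut into at most $k+2$ pieces of $\mathfrak{S}_{2^{i+1}}$. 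For the cardinality: the pieces of $\mathfrak{S}_{2^i}$ are disjoint, so by superadditivity at most $2^{i+1}$ of them have $\Phi$-weight exceeding $W/2^{i+1}$, whence $|\mathfrak{S}_{2^{i+1}}|\le|\mathfrak{S}_{2^i}|+(k+1)2^{i+1}$; since $|\mathfrak{S}_1|=1$, an easy induction gives $|\mathfrak{S}_{2^i}|\le 2(k+1)2^i$ for all $i\ge 0$.

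Finally, for $n\in\N$ put $\mathfrak{S}_n:=\mathfrak{S}_{2^{\lceil\log n\rceil}}$. Then $|\mathfrak{S}_n|\le 2(k+1)2^{\lceil\log n\rceil}\le 4(k+1)n$ and every non-singleton piece has $\Phi$-weight $\le W\,2^{-\lceil\log n\rceil}\le 2W/n\le (k+2)W/n$, which is assertion 1 with $C(k)=4(k+1)$. For assertion 2, if $m\le 2n$ then $\lceil\log m\rceil\le\lceil\log n\rceil+1$, so $\mathfrak{S}_m=\mathfrak{S}_{2^{j}}$ with $j\le\lceil\log n\rceil+1$; if $j\le\lceil\log n\rceil$ then $\mathfrak{S}_n$ refines $\mathfrak{S}_m$, so each element of $\mathfrak{S}_n$ meets exactly one element of $\mathfrak{S}_m$, while if $j=\lceil\log n\rceil+1$ then $\mathfrak{S}_m$ refines $\mathfrak{S}_n$ with each piece cut into at most $k+2$ pieces; in both cases the intersection number is at most $k+2\le C(k)$. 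The monotonicity remark and the induction on $i$ are routine; the one genuinely delicate point is that the refinement step may split only the few over-weight pieces, and each only into at most $k+2$ pieces — this is what keeps $|\mathfrak{S}_{2^i}|$ linear (rather than exponential) in $2^i$, and it is also precisely why one must build the whole dyadic scale simultaneously instead of constructing $\mathfrak{S}_n$ separately for each $n$, which is in turn what makes assertion 2 come out.
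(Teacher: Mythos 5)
The paper itself does not prove this lemma — it is imported verbatim from \cite{vas_john} — so there is no in-paper argument to compare against; judged on its own, your proof is correct and self-contained, and its engine (greedy removal of the hanging subtree at a maximal $\tau$-heavy vertex, so that the split-off pieces are the singleton plus at most $k$ light child subtrees, with the iteration count controlled by superadditivity) is the same threshold-splitting idea that underlies the cited source, organized here into a nested dyadic family $\{\mathfrak{S}_{2^i}\}$ so that assertion 2 follows from refinement in one direction and the ``at most $k+2$ children per refinement step'' bound in the other. All the quantitative steps check out: monotonicity of $\Phi$ from (\ref{prop_psi}), exactly one iteration when $\tau<\Phi\le 2\tau$ giving at most $k+2$ pieces, at most $2^{i+1}$ over-weight pieces per level giving $|\mathfrak{S}_{2^i}|\le 2(k+1)2^i$, and the final choice $\mathfrak{S}_n=\mathfrak{S}_{2^{\lceil\log n\rceil}}$ with $C(k)=4(k+1)$ covering both assertions (with $m\le 2n$ forcing $\lceil\log m\rceil\le\lceil\log n\rceil+1$). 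One small point to patch: your refinement rule only addresses pieces of weight $\le W/2^{i+1}$ and pieces of weight in $(W/2^{i+1},W/2^i]$, but $\mathfrak{S}_{2^i}$ may also contain \emph{singleton} pieces of weight exceeding $W/2^i$ (the inductive invariant bounds only non-singleton pieces); you should state that these are simply kept (equivalently, the one-step splitting applied to them is a no-op). This costs nothing — singletons are exempt from assertion 1, they do not increase the piece count, and the refinement structure used for assertion 2 is unaffected — but as written the rule is silent about them.
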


\renewcommand{\proofname}{\bf Proof of Lemma \ref{entr_pmtqt}}
\begin{proof}
{\bf Step 1.} We define the numbers $t_2(n)$ by (\ref{t2n});
$\varepsilon=\varepsilon(\mathfrak{Z}_0)$ will be chosen later.
The sequence $\tilde k_t(n)$ is defined so that
\begin{align}
\label{til_kt_est} \tilde k_t(n)-1
\underset{\mathfrak{Z}_0}{\lesssim}
 n\cdot 2^{-\varepsilon|t-t_2(n)|} \quad \text{if
 }(\ref{nu_t_k})\text{ holds},
\end{align}
\begin{align}
\label{til_kt_est1} \tilde k_t(n)-1
\underset{\mathfrak{Z}_0}{\lesssim} \max \left\{n\cdot
2^{-\varepsilon|t-t_2(n)|}, \, 2^t\right\} \quad \text{if
 }(\ref{nu_t_k1})\text{ holds}.
\end{align}
This together with (\ref{nu_t_est2}) implies (\ref{stkt1}).

Further we consider $t_*(n)\le t<t_{**}(n)$.

{\bf Step 2.} Let ${\bf T}$ be a partition of $\Gamma_t$ into
subtrees. Then ${\bf T}= \{{\cal A}_{t,i,s}\}_{i\in \hat J_t, \,
s\in I_{t,i}}$, where $\{{\cal A}_{t,i,s}\}_{s\in I_{t,i}}$ --- is
a partition of ${\cal A}_{t,i}$. Denote by $\hat \xi_{t,i,s}$ the
root of ${\cal A}_{t,i,s}$.

Define the operator $\hat P_{\mathbf{T}}:\hat X_p(\Omega)
\rightarrow Y_q(\Omega)$ as follows. We set $\hat
P_{\mathbf{T}}f|_{\Omega \backslash G_t}=0$,
\begin{align}
\label{pt1} \hat P_{\mathbf{T}}f|_{\Omega _{{\cal A}_{t,i,s}}}=0,
\quad \text{if} \quad \hat \xi_{t,i,s}=\hat \xi_{t,i}
\end{align}
\begin{align}
\label{pt2} \hat P_{\mathbf{T}}f|_{\Omega _{{\cal A}_{t,i,s}}}=
(P_{t,m_t}f-Q_tf)|_{\Omega _{{\cal A}_{t,i,s}}} \quad \text{if}
\quad {\bf V}({\cal A}_{t,i,s})=\{\hat \xi_{t,i,s}\};
\end{align}
in other cases we set
\begin{align}
\label{pt3} \hat P_{\mathbf{T}}f|_{\Omega _{{\cal A}_{t,i,s}}}=
(P_{\Omega _{{\cal A}_{\hat \xi_{t,i,s}}}}f-Q_tf)|_{\Omega _{{\cal
A}_{t,i,s}}}
\end{align}
(see Assumption \ref{sup1}). Let $T=\{\Omega_{{\cal A}'}\}_{{\cal
A}'\in \mathbf{T}}$. Notice that
\begin{align}
\label{hptfsto} \hat P_{\mathbf{T}}f \in {\cal S}_{{T}}(\Omega).
\end{align}

Let $f\in B\hat X_p(\Omega)$. If $\hat \xi_{t,i,s}=\hat
\xi_{t,i}$, then
\begin{align}
\label{ptmt1} \|P_{t,m_t}f-Q_tf-\hat P_{\mathbf{T}}f\|_{Y_q(\Omega
_{{\cal A}_{t,i,s}})}
\stackrel{(\ref{ftp_ti_mod}),(\ref{qtf_x}),(\ref{fpttm1}),
(\ref{pt1})}{\underset{\mathfrak{Z}_0}{\lesssim}}
2^{-\lambda_*k_*t} u_*(2^{k_*t}) \|f\|_{X_p(\Omega _{{\cal
A}_{t,i,s}})}.
\end{align}
If ${\bf V}({\cal A}_{t,i,s})=\{\hat \xi_{t,i,s}\}$, then
\begin{align}
\label{ptmt2} \|P_{t,m_t}f-Q_tf-\hat
P_{\mathbf{T}}f\|_{Y_q(\Omega_{{\cal
A}_{t,i,s}})}\stackrel{(\ref{pt2})}{=}0.
\end{align}
In other cases
\begin{align}
\label{ptmt3} \|P_{t,m_t}f-Q_tf-\hat
P_{\mathbf{T}}f\|_{Y_q(\Omega)}
\stackrel{(\ref{f_pom_f}),(\ref{w_s_2}),(\ref{fpttm1}),(\ref{pt3})}
{\underset{\mathfrak{Z}_0}{\lesssim}} 2^{-\lambda_*k_*t}
u_*(2^{k_*t}) \|f\|_{X_p(\Omega _{{\cal A}_{t,i,s}})}.
\end{align}
We set $\mathbf{T}'=\{{\cal A}'\in \mathbf{T}:\; {\rm card}\, {\bf
V}({\cal A}')\ge 2\}$. Then for any $f \in B\hat X_p(\Omega)$
\begin{align}
\label{n_ptmtqt} \|P_{t,m_t}f-Q_tf-\hat P_{\mathbf{T}}f\|
_{Y_q(G_t)} \stackrel{(\ref{ptmt1}),
(\ref{ptmt2}),(\ref{ptmt3})}{\underset{\mathfrak{Z}_0}{\lesssim}}
2^{-\lambda_*k_*t} u_*(2^{k_*t}) \left(\sum \limits _{{\cal A}'\in
\mathbf{T}'} \|f\|_{X_p(\Omega_{{\cal A}'})}^q\right)^{1/q}.
\end{align}

{\bf Step 3.} Let $r\in \N$,
\begin{align}
\label{r_le_nu4} r\le \frac{\overline{\nu}_t}{4}.
\end{align}
Denote by ${\cal N}_r$ the family of partitions $\mathbf{T}$ of
the graph $\Gamma_t$ into subtrees such that ${\rm card}\, \{{\cal
A}_{t,i,s}\in \mathbf{T}:\; \hat\xi_{t,i,s}\ne \hat \xi_{t,i}\}\le
r$. The number $|{\cal N}_r|$ can be estimated from above by the
number of choices of sets of vertices $\hat \xi_{t,i,s}\ne \hat
\xi_{t,i}$ in ${\bf V}(\Gamma_t)$. Therefore,
\begin{align}
\label{cnr} |{\cal N}_r|\le \sum \limits _{m=0}^r C_{|{\bf
V}(\Gamma_t)|}^m\stackrel{(\ref{nu_t_k}),
(\ref{nu_t_k1})}{\le}\sum \limits _{m=0}^r C_{\lceil
c_3\overline{\nu}_t\rceil}^m \stackrel{(\ref{r_le_nu4})}{\lesssim}
C^r_{\lceil c_3\overline{\nu}_t\rceil} \le \left(\frac{e\lceil
c_3\overline{\nu}_t\rceil}{r}\right)^r.
\end{align}

If (\ref{nu_t_k}) holds, then we set
\begin{align}
\label{rt_n} r_t=\lceil n\cdot
2^{-2\varepsilon|t-t_2(n)|-c}\rceil, \quad n\ge N(\mathfrak{Z}_0),
\end{align}
where $c=c(\mathfrak{Z}_0)$, $N(\mathfrak{Z}_0)$ are such that
$r_t\le \frac{\overline{\nu}_t}{4}$ for $n\ge N(\mathfrak{Z}_0)$.
Then in the case $n\cdot 2^{-2\varepsilon|t-t_2(n)|-c}\ge 1$ we
have
$$
\log|{\cal N}_{r_t-1}|\le \log|{\cal N}_{r_t}|
\stackrel{(\ref{cnr})} {\underset {\mathfrak{Z}_0}{\lesssim}} r_t
\log \frac {c_3e \overline{\nu}_t}{r_t}
\stackrel{(\ref{t2n}),(\ref{rt_n})}{\underset
{\mathfrak{Z}_0}{\lesssim}}
$$
$$
\lesssim n\cdot 2^{-2\varepsilon(t-t_*(n))} \log
\left(\frac{c_3e\overline{\nu}_{t_*(n)}}{n\cdot 2^
{-2\varepsilon(t-t_*(n))-c}}\cdot \frac{\overline {\nu}_t}
{\overline{\nu}_{t_*(n)}}\right) \stackrel{(\ref{nu_t_k}),
(\ref{sum_l_est}),(\ref{nu_t_est1})}{\underset{\mathfrak{Z}_0}{\lesssim}}
$$
$$
\lesssim n\cdot 2^{-2\varepsilon(t-t_*(n))} \log (2^
{2\varepsilon(t-t_*(n))} \cdot 2^{2\gamma_*k_*(t-t_*(n))}\cdot e)
\underset{\mathfrak{Z}_0}{\lesssim}
$$
$$
\lesssim n\cdot 2^{-2\varepsilon(t-t_*(n))} \cdot
(2\gamma_*k_*+2\varepsilon)(t-t_*(n)+1)\underset{\mathfrak{Z}_0,
\varepsilon}{\lesssim} n\cdot2^{-\varepsilon(t-t_*(n))};
$$
if $n\cdot 2^{-2\varepsilon|t-t_2(n)|-c}< 1$, then $|{\cal
N}_{r_t-1}| \stackrel{(\ref{cnr})}{=} C^0_{|{\bf V}(\Gamma_t)|}=1$
and $\log |{\cal N}_{r_t-1}|=0$. Therefore,
\begin{align}
\label{log_nt} \log|{\cal N}_{r_t-1}|
\underset{\mathfrak{Z}_0,\varepsilon}{\lesssim}
n\cdot2^{-\varepsilon(t-t_*(n))}.
\end{align}

If (\ref{nu_t_k1}) holds, then we set
\begin{align}
\label{rt_n1} r_t=\left\lceil \frac{n\cdot
2^{-\varepsilon|t-t_2(n)|}}{2^{t+c}}\right\rceil,
\end{align}
where $c=c(\mathfrak{Z}_0)$ is such that $r_t\le
\frac{\overline{\nu}_t}{4}$. Then
$$
\log|{\cal N}_{r_t-1}|\le \log|{\cal N}_{r_t}|
\stackrel{(\ref{cnr})}{\underset{\mathfrak{Z}_0}{\lesssim}} r_t
\log \frac{ec_3\overline{\nu}_t}{r_t}\le r_t \log
(c_3e\overline{\nu}_t)
\stackrel{(\ref{nu_t_k1}),(\ref{sum_l_est})}{\underset
{\mathfrak{Z}_0}{\lesssim}}
$$
$$
\lesssim r_t \cdot 2^t\underset {\mathfrak{Z}_0}{\lesssim} \max
\left\{n\cdot2^{-\varepsilon|t-t_2(n)|}, \, 2^t\right\};
$$
i.e.,
\begin{align}
\label{log_nt1} \log|{\cal N}_{r_t-1}|
\underset{\mathfrak{Z}_0,\varepsilon}{\lesssim} \max
\left\{n\cdot2^{-\varepsilon|t-t_2(n)|}, \, 2^t\right\}.
\end{align}

{\bf Step 4.} Consider the tree $\tilde{\cal A}_t$ with vertex set
${\bf V}({\cal A})\backslash {\bf V}(\tilde {\Gamma}_{t+1})$. For
$f\in B\hat X_p(\Omega)$ we define the function $\Phi_f:2^{{\bf
V}(\tilde{\cal A}_t)} \rightarrow \R_+$ by
\begin{align}
\label{phif} \Phi_f({\bf W}) =\sum \limits _{\xi\in {\bf W}\cap
{\bf V}(\Gamma_t)} \|f\|^p_{X_p(\hat F(\xi))}.
\end{align}
Then $\Phi_f({\bf W}_1\sqcup {\bf W}_2)=\Phi_f({\bf
W}_1)+\Phi_f({\bf W}_2)$. From (\ref{c_v1_a}) and Lemma
\ref{lemma_o_razb_dereva1} it follows that there exists a sequence
of partitions $\{\mathbf{T}_{f,t,l}\}_{0\le l\le \log r_t}$ of the
tree $\tilde{\cal A}_t$ such that
\begin{align}
\label{ctftl} {\rm card}\, \mathbf{T}_{f,t,l}\le r_t\cdot 2^{-l},
\end{align}
\begin{align}
\label{ctinter} {\rm card}\, \{{\cal A}''\in \mathbf{T}_{f,t,l\pm
1}:\; {\bf V}({\cal A}'')\cap {\bf V}({\cal A}')\ne \varnothing\}
\underset{\mathfrak{Z}_0}{\lesssim} 1, \quad {\cal A}'\in
\mathbf{T}_{f,t,l},
\end{align}
and for any subtree ${\cal A}'\in \mathbf{T}_{f,t,l}$ such that
${\rm card}\, {\bf V}({\cal A}')\ge 2$ the following estimate
holds:
\begin{align}
\label{phi_f_va} \Phi_f({\bf V}({\cal A}'))
\underset{\mathfrak{Z}_0}{\lesssim} \frac{2^l}{r_t}.
\end{align}
Here we may assume that
\begin{align}
\label{ctftlrt1} {\rm card}\, \mathbf{T}_{f,t,\lfloor\log
r_t\rfloor}=1.
\end{align}
Denote $\mathbf{T}_{f,t,l}^*:=\mathbf{T}_{f,t,l}|_{\Gamma_t}$.

We have
\begin{align}
\label{tft0nrt}
\mathbf{T}_{f,t,0}^*=\mathbf{T}_{f,t,0}|_{\Gamma_t}\in {\cal
N}_{r_t-1},
\end{align}
\begin{align}
\label{at} \sup _{f\in B\hat X_p(\Omega)}  \|P_{t,m_t}f-Q_tf-\hat
P_{\mathbf{T}^*_{f,t,0}}f\| _{Y_q(G_t)}
\stackrel{(\ref{n_ptmtqt}), (\ref{phif}),
(\ref{ctftl}),(\ref{phi_f_va})}{\underset{\mathfrak{Z}_0}{\lesssim}}
2^{-\lambda_*k_*t} u_*(2^{k_*t}) r_t^{\frac 1q-\frac 1p}=:A_t.
\end{align}

{\bf Step 5.} Let (\ref{nu_t_k}) hold. Then for sufficiently small
$\varepsilon>0$ we get
$$
\sum \limits _{t=t_*(n)}^{t_{**}(n)-1} A_t\stackrel{(\ref{t2n}),
(\ref{rt_n}),(\ref{at})}{\underset{\mathfrak{Z}_0}{\lesssim}} \sum
\limits _{t=t_*(n)}^{t_{**}(n)-1}2^{-\lambda_*k_*t} u_*(2^{k_*t})
(n\cdot 2^{-2\varepsilon(t-t_*(n))})^{\frac 1q-\frac
1p}\stackrel{(\ref{nu_t_est1})}{\underset{\mathfrak{Z}_0}{\lesssim}}
$$
$$
\lesssim n^{-\lambda_*\beta_*+\frac 1q-\frac 1p}u_*(n^{\beta_*}
\varphi_*(n)) \varphi_*^{-\lambda_*}(n);
$$
i.e.,
\begin{align}
\label{at_sum} \sum \limits _{t=t_*(n)}^{t_{**}(n)-1} A_t
\underset{\mathfrak{Z}_0}{\lesssim} n^{-\lambda_*\beta_*+\frac
1q-\frac 1p}u_*(n^{\beta_*} \varphi_*(n))
\varphi_*^{-\lambda_*}(n).
\end{align}
Let (\ref{nu_t_k1}) hold. Then
$$
\sum \limits _{t=t_*(n)}^{t_{**}(n)-1}
A_t\stackrel{(\ref{rt_n1}),(\ref{at})}{\underset{\mathfrak{Z}_0}{\lesssim}}
\sum \limits _{t=t_*(n)}^{t_{**}(n)-1} 2^{-\lambda_*t} u_*(2^{t})
(n\cdot 2^{-\varepsilon|t-t_2(n)|-t})^{\frac 1q-\frac 1p}=:A.
$$
In $\lambda_*>\frac 1p-\frac 1q$, then for sufficiently small
$\varepsilon>0$ we have
$$
A \stackrel{(\ref{t2n})}{\underset{\mathfrak{Z}_0}{\lesssim}}
2^{-\lambda_*t_*(n)}u_*(2^{t_*(n)})\cdot n^{\frac 1q-\frac
1p}\cdot 2^{\left(\frac 1p-\frac 1q\right)t_*(n)}
\stackrel{(\ref{nu_t_est2})}{\underset{\mathfrak{Z}_0}{\lesssim}}
n^{\frac 1q-\frac 1p} (\log n)^{-\lambda_*+\frac 1p-\frac 1q}
u_*(\log n);
$$
i.e.,
\begin{align}
\label{at_sum1} \sum \limits _{t=t_*(n)}^{t_{**}(n)-1} A_t
\underset{\mathfrak{Z}_0}{\lesssim} n^{\frac 1q-\frac 1p} (\log
n)^{-\lambda_*+\frac 1p-\frac 1q} u_*(\log n).
\end{align}
If $\lambda_*<\frac 1p-\frac 1q$, then for sufficiently small
$\varepsilon>0$
$$
A \stackrel{(\ref{t2n})}{\underset{\mathfrak{Z}_0}{\lesssim}}
2^{-\lambda_*t_{**}(n)}u_*(2^{t_{**}(n)})\cdot n^{\frac 1q-\frac
1p}\cdot 2^{\left(\frac 1p-\frac 1q\right)t_{**}(n)}
\stackrel{(\ref{nu_t_est2})}{\underset{\mathfrak{Z}_0}{\lesssim}}
n^{-\lambda_*} u_*(n);
$$
i.e.,
\begin{align}
\label{at_sum2} \sum \limits _{t=t_*(n)}^{t_{**}(n)-1} A_t
\underset{\mathfrak{Z}_0}{\lesssim} n^{-\lambda_*} u_*(n).
\end{align}

{\bf Step 6.} Let $0\le l\le \log r_t$, $\mathbf{T}_{t,l}
=\mathbf{T}_{\hat f,t,l}^*$ for some function $\hat f\in B\hat
X_p(\Omega)$. We set
\begin{align}
\label{ktl} \hat k_{t,l}=\lceil n\cdot
2^{-\varepsilon(|t-t_2(n)|+l)}\rceil.
\end{align}
Let us estimate the sum
$$
\sum \limits _{t=t_*(n)}^{t_{**}(n)-1} \sum \limits _{0\le l\le
\log r_t-1} e_{\hat k_{t,l}} (\hat P_{\mathbf{T}_{t,l}}-\hat
P_{\mathbf{T}_{t,l+1}}:\hat X_p(\Omega) \rightarrow Y_q(G_t))
$$
(notice that $\hat P_{\mathbf{T}_{t,\lfloor \log r_t\rfloor}}
\stackrel{(\ref{pt1})}{=} 0$ since ${\rm card}\, \mathbf{T}_{\hat
f,t,\lfloor \log r_t\rfloor}\stackrel{(\ref{ctftlrt1})}{=}1$).

We set $T_{t,l}=\{\Omega_{{\cal A}'}\}_{{\cal A}'\in
\mathbf{T}_{t,l}}$, $$\hat T_{t,l}=\{E\cap E':\; E\in T_{t,l},
\;E'\in T_{t,l+1}, \; {\rm mes}\, (E\cap E')>0\}.$$

By construction, for any function $f\in \hat X_p(\Omega)$ we have
$\hat P_{\mathbf{T}_{t,l}}f-\hat P_{\mathbf{T}_{t,l+1}}f
\stackrel{(\ref{hptfsto})}{\in} {\cal S}_{\hat{T}_{t,l}}(\Omega)$.
Let $s'_{t,l}=\dim \, {\cal S}_{\hat{T}_{t,l}}(\Omega)$. From
(\ref{ctftl}) and (\ref{ctinter}) it follows that there exists
$C(\mathfrak{Z}_0)\ge 1$ such that
\begin{align}
\label{sstl} s'_{t,l}\le \lceil C(\mathfrak{Z}_0)r_t\cdot
2^{-l}\rceil =:s''_{t,l}.
\end{align}

By Lemma \ref{oper_a}, there exists an isomorphism
$\overline{A}_{t,l}:{\cal S}_{\hat T_{t,l}}(\Omega) \rightarrow
\R^{s'_{t,l}}$ such that
\begin{align}
\|\overline{A}_{t,l}\|_{Y_{p,q,\hat T_{t,l}}(G_t)\rightarrow
l_p^{s'_{t,l}}} \underset{\mathfrak{Z}_0}{\lesssim} 1, \quad
\|\overline{A}_{t,l} ^{\,-1}\| _{l_q^{s'_{t,l}} \rightarrow
Y_q(G_t)}\underset{\mathfrak{Z}_0}{\lesssim} 1.
\end{align}
Hence, by (\ref{mult_n}) it suffices to estimate the sum
$$
\sum \limits _{t=t_*(n)}^{t_{**}(n)-1} \sum \limits _{0\le l\le
\log r_t-1}\|\hat P_{\mathbf{T}_{t,l}}-\hat
P_{\mathbf{T}_{t,l+1}}\|_{\hat X_p(\Omega) \rightarrow Y_{p,q,
\hat T_{t,l}}(\Omega)} e_{\hat k_{t,l}} (I_{s''_{t,l}}:
l_p^{s''_{t,l}} \rightarrow l_q^{s''_{t,l}})=:S.
$$

Let us estimate $\|\hat P_{\mathbf{T}_{t,l}}-\hat
P_{\mathbf{T}_{t,l+1}}\|_{\hat X_p(\Omega) \rightarrow Y_{p,q,
\hat T_{t,l}}(\Omega)}$. Consider a function $f\in B\hat
X_p(\Omega)$. Then
$$
\|\hat P_{\mathbf{T}_{t,l}}f-\hat
P_{\mathbf{T}_{t,l+1}}f\|_{p,q,\hat T_{t,l}} =\left(\sum \limits
_{E\in \hat T_{t,l}} \|\hat P_{\mathbf{T}_{t,l}}f-\hat
P_{\mathbf{T}_{t,l+1}}f\|^p_{Y_q(E)}\right)^{1/p}
\stackrel{(\ref{ctinter})}{\underset{\mathfrak{Z}_0}{\lesssim}}
$$
$$
\left(\sum \limits _{E'\in T_{t,l}} \|P_{t,m_t}f-Q_tf- \hat
P_{\mathbf{T}_{t,l}}f\| _{Y_q(E')}^p +\sum \limits _{E''\in
T_{t,l+1}} \|P_{t,m_t}f-Q_tf- \hat P_{\mathbf{T}_{t,l+1}}f\|
_{Y_q(E'')}^p\right)^{1/p}
$$
$$
\stackrel{(\ref{ptmt1}),(\ref{ptmt2}),(\ref{ptmt3})}
{\underset{\mathfrak{Z}_0}{\lesssim}} 2^{-\lambda_*k_*t}
u_*(2^{k_*t});
$$
i.e.,
\begin{align}
\label{norm_ptl} \|\hat P_{\mathbf{T}_{t,l}}-\hat
P_{\mathbf{T}_{t,l+1}}\|_{\hat X_p(\Omega) \rightarrow Y_{p,q,
\hat T_{t,l}}(\Omega)} \underset{\mathfrak{Z}_0}{\lesssim}
2^{-\lambda_*k_*t} u_*(2^{k_*t}).
\end{align}

Since for $0\le l\le \log r_t$ and small $\varepsilon>0$
$$
\frac{s''_{t,l}}{\hat k_{t,l}}
\stackrel{(\ref{ktl}),(\ref{sstl})}{\underset{\mathfrak{Z}_0}{\asymp}}
\frac{r_t\cdot 2^{-l}}{\lceil n\cdot
2^{-\varepsilon(|t-t_2(n)|+l)}\rceil}
\stackrel{(\ref{rt_n}),(\ref{rt_n1})}{\underset{\mathfrak{Z}_0}{\lesssim}}
1,
$$
by Theorem \ref{shutt_trm} we get $$e_{\hat k_{t,l}}
(I_{s''_{t,l}}: l_p^{s''_{t,l}} \rightarrow l_q^{s''_{t,l}})
\underset{\mathfrak{Z}_0}{\lesssim} (s''_{t,l})^{\frac 1q-\frac
1p} \cdot 2^{-\frac{\hat k_{t,l}}{s''_{t,l}}}.$$ Moreover, there
exists $m_*=m_*(\mathfrak{Z}_0)$ such that for $0\le \nu<m_*$ the
sequence $\left\{\frac{\hat
k_{t,m_*l+\nu}}{s''_{t,m_*l+\nu}}\right\}_{0\le m_*l+\nu\le \log
r_t}$ increases not slower than some geometric progression. This
together with (\ref{norm_ptl}) yields that
$$
S \underset{\mathfrak{Z}_0}{\lesssim} \sum \limits
_{t=t_*(n)}^{t_{**}(n)-1}2^{-\lambda_*k_*t} u_*(2^{k_*t}) \max
_{0\le \nu<m_*}(s''_{t,\nu})^{\frac 1q-\frac 1p} \cdot
2^{-\frac{\hat
k_{t,\nu}}{s''_{t,\nu}}}\underset{\mathfrak{Z}_0}{\lesssim}
$$
$$
\lesssim \sum \limits _{t=t_*(n)}^{t_{**}(n)-1}2^{-\lambda_*k_*t}
u_*(2^{k_*t}) (\hat k_{t,0})^{\frac 1q-\frac 1p}=:S'.
$$

If (\ref{nu_t_k}) holds, then
$$
S' \stackrel{(\ref{ktl})}{\underset{\mathfrak{Z}_0}{\lesssim}}
2^{-\lambda_*k_*t_*(n)} u_*(2^{k_*t_*(n)})(\hat
k_{t_*(n),0})^{\frac 1q-\frac 1p}\stackrel{(\ref{nu_t_est1}),
(\ref{t2n})}{\underset{\mathfrak{Z}_0}{\lesssim}}
n^{-\lambda_*\beta_*+\frac 1q-\frac 1p} \varphi_*^{-\lambda_*}(n)
u_*(n^{\beta_*} \varphi_*(n)).
$$
Hence,
\begin{align}
\label{slpt1} \sum \limits _{t=t_*(n)}^{t_{**}(n)-1} \sum \limits
_{0\le l\le \log r_t-1} e_{\hat k_{t,l}} (\hat
P_{\mathbf{T}_{t,l}}-\hat P_{\mathbf{T}_{t,l+1}}:\hat X_p(\Omega)
\rightarrow
Y_q(G_t))\underset{\mathfrak{Z}_0}{\lesssim}n^{-\lambda_*\beta_*+\frac
1q-\frac 1p} \varphi_*^{-\lambda_*}(n) u_*(n^{\beta_*}
\varphi_*(n)).
\end{align}

Let (\ref{nu_t_k1}) hold. Then for $\lambda_*>\frac 1p-\frac 1q$
$$
S'\stackrel{(\ref{ktl})}{\underset{\mathfrak{Z}_0}{\lesssim}}
2^{-\lambda_*k_*t_*(n)} u_*(2^{k_*t_*(n)})(\hat
k_{t_*(n),0})^{\frac 1q-\frac 1p}\stackrel{(\ref{nu_t_est2}),
(\ref{t2n})}{\underset{\mathfrak{Z}_0}{\lesssim}} n^{\frac
1q-\frac 1p} (\log n)^{-\lambda_*} u_*(\log n).
$$
Hence,
\begin{align}
\label{slpt2} \sum \limits _{t=t_*(n)}^{t_{**}(n)-1} \sum \limits
_{0\le l\le \log r_t-1} e_{\hat k_{t,l}} (\hat
P_{\mathbf{T}_{t,l}}-\hat P_{\mathbf{T}_{t,l+1}}:\hat X_p(\Omega)
\rightarrow Y_q(G_t))\underset{\mathfrak{Z}_0}{\lesssim} n^{\frac
1q-\frac 1p} (\log n)^{-\lambda_*+\frac 1p-\frac 1q} u_*(\log n).
\end{align}
Let $\lambda_*<\frac 1p-\frac 1q$. Then $\hat k_{t,0}
\stackrel{(\ref{ktl})}{\ge} n\cdot 2^{-\varepsilon t_{**}(n)}
\stackrel{(\ref{nu_t_est2})}{\underset{\mathfrak{Z}_0}{\asymp}}
n^{1-\varepsilon}$. Therefore, for sufficiently small
$\varepsilon>0$
$$
S' \underset{\mathfrak{Z}_0}{\lesssim} 2^{-\lambda_*k_*t_*(n)}
u_*(2^{k_*t_*(n)})(\hat k_{t_*(n),0})^{\frac 1q-\frac 1p}
\stackrel{(\ref{nu_t_est2})}{\underset{\mathfrak{Z}_0}{\lesssim}}
$$
$$
\lesssim n^{\left(\frac 1q-\frac 1p\right)(1-\varepsilon)} (\log
n)^{-\lambda_*} u_*(\log n)\underset{\mathfrak{Z}_0}{\lesssim}
n^{-\lambda_*} u_*(n);
$$
i.e.,
\begin{align}
\label{slpt3} \sum \limits _{t=t_*(n)}^{t_{**}(n)-1} \sum \limits
_{0\le l\le \log r_t-1} e_{\hat k_{t,l}} (\hat
P_{\mathbf{T}_{t,l}}-\hat P_{\mathbf{T}_{t,l+1}}:\hat X_p(\Omega)
\rightarrow Y_q(G_t))\underset{\mathfrak{Z}_0}{\lesssim}
n^{-\lambda_*} u_*(n).
\end{align}

{\bf Step 7.} Let ${\cal N}'_t=\{\mathbf{T}_{\hat
f,t,0}|_{\Gamma_t}:\; \hat f\in B\hat X_p(\Omega)\}$. Then ${\cal
N}'_t \stackrel{(\ref{tft0nrt})}{\subset} {\cal N}_{r_t-1}$, and
\begin{align}
\label{log_nsrt} \log |{\cal N}'_t| \stackrel{(\ref{t2n}),
(\ref{log_nt})} {\underset {\mathfrak{Z}_0,\varepsilon}{\lesssim}}
n\cdot 2^{-\varepsilon|t-t_2(n)|} \quad \text{if}\quad
(\ref{nu_t_k}) \text{ holds},
\end{align}
\begin{align}
\label{log_nsrt1} \log |{\cal N}'_t| \stackrel{(\ref{log_nt1})}
{\underset {\mathfrak{Z}_0,\varepsilon}{\lesssim}} \max \{n\cdot
2^{-\varepsilon|t-t_2(n)|}, \, 2^t\} \quad \text{if}\quad
(\ref{nu_t_k1})\text{ holds}.
\end{align}
Let $\hat k_{t,l}$ be defined by (\ref{ktl}). We set
\begin{align}
\label{hkt} \hat k_t=\sum \limits _{0\le l\le \log r_t-1} (\hat
k_{t,l}-1)+1 \underset{\mathfrak{Z}_0,\varepsilon}{\lesssim}
n\cdot 2^{-\varepsilon|t-t_2(n)|}.
\end{align}

By Theorem \ref{lifs_sta},
\begin{align}
\label{ehkt}
\begin{array}{c} \displaystyle
e_{\hat k_t+[\log |{\cal N}'_t|]+1} (P_{t,m_t}-Q_t:\hat
X_p(\Omega) \rightarrow Y_q(G_t)) \le \\
\le \sup _{{\bf T} \in {\cal N}'_t} e_{\hat k_t}(\hat
P_{\mathbf{T}}:\hat X_p(\Omega) \rightarrow Y_q(G_t))+\\+
\sup_{f\in B\hat X_p(\Omega)} \inf _{\mathbf{T}\in {\cal N}'_t}
\|P_{t,m_t}f-Q_tf-\hat P_{\mathbf{T}}f\|_{Y_q(G_t)}\le
\\
\le \sup _{\hat f\in B\hat X_p(\Omega)} e_{\hat k_t}(\hat
P_{\mathbf{T}_{\hat f,t,0}}:\hat X_p(\Omega) \rightarrow
Y_q(G_t))+ \\ +\sup_{f\in B\hat X_p(\Omega)}
\|P_{t,m_t}f-Q_tf-\hat P_{\mathbf{T}_{f,t,0}}f\|_{Y_q(G_t)}.
\end{array}
\end{align}
Let $\tilde k_t=\tilde k_t(n)=\hat k_t+[\log |{\cal N}'_t|]+1$.
Then
$$
\tilde k_t-1\stackrel{(\ref{log_nsrt}),(\ref{hkt})}
{\underset{\mathfrak{Z}_0,\varepsilon}{\lesssim}} n\cdot
2^{-\varepsilon|t-t_2(n)|} \quad \text{for}\quad (\ref{nu_t_k}),
$$
$$
\tilde k_t-1\stackrel{(\ref{log_nsrt1}),(\ref{hkt})}
{\underset{\mathfrak{Z}_0,\varepsilon}{\lesssim}} \max \{n\cdot
2^{-\varepsilon|t-t_2(n)|}, \, 2^t\} \quad \text{for}\quad
(\ref{nu_t_k1});
$$
i.e., (\ref{til_kt_est}), (\ref{til_kt_est1}) hold. Further,
$$
\sum \limits_{t=t_*(n)}^{t_{**}(n)-1} e_{\tilde k_t}
(P_{t,m_t}-Q_t:\hat X_p(\Omega) \rightarrow Y_q(G_t))
\stackrel{(\ref{ehkt})}{\le}
$$
$$
\le \sum \limits_{t=t_*(n)}^{t_{**}(n)-1} \sup _{\hat f\in B\hat
X_p(\Omega)} e_{\hat k_t}(\hat P_{\mathbf{T}_{\hat f,t,0}}:\hat
X_p(\Omega) \rightarrow Y_q(G_t))+
$$
$$
+\sum \limits_{t=t_*(n)}^{t_{**}(n)-1} \sup_{f\in B\hat
X_p(\Omega)} \|P_{t,m_t}f-Q_tf-\hat
P_{\mathbf{T}_{f,t,0}}f\|_{Y_q(G_t)}.
$$
We apply (\ref{eklspt}) and (\ref{slpt1}), (\ref{slpt2}),
(\ref{slpt3}) to estimate the first summand, and we use
(\ref{at}), (\ref{at_sum}), (\ref{at_sum1}) and (\ref{at_sum2}) to
estimate the second summand.

This completes the proof.
\end{proof}
The relations (\ref{eklspt}), (\ref{f_q_t}), (\ref{f_p_t_m}) and
Lemmas \ref{qt_st_n}, \ref{sum_qt_est}, \ref{ptm_sum},
\ref{t_g_tstn}, \ref{entr_pmtqt} yield Theorems \ref{trm1} and
\ref{trm2} for $p<q$.

\renewcommand{\proofname}{\bf Proof}

\begin{Rem}
\label{diskr_case} Suppose that Assumptions \ref{sup1} and
\ref{sup3} hold, and Assumption \ref{sup2} is substituted by the
following condition: for any $\xi\in {\bf V}({\cal A})$ the set
$F(\xi)$ is the atom of ${\rm mes}$. Then the assertions of
Theorems \ref{trm1} and \ref{trm2} hold with $\delta_*=+\infty$.
\end{Rem}

\section{Estimates of entropy numbers of weighted Sobolev spaces}

Let $\Omega\in {\bf FC}(a)$, and let $\Gamma \subset \partial
\Omega$ be an $h$-set. Suppose that there exists $c_0\ge c_*$ such
that
\begin{align}
\label{hths} \frac{h(t)}{h(s)}\le c_0, \quad j\in {\mathbb{N}},
\quad t, \quad s\in [2^{-j-1}, \, 2^{-j+1}]
\end{align}
(here $c_*$ is the constant from Definition \ref{h_set}). In
\cite{vas_vl_raspr, vas_vl_raspr2} there were constructed a tree
${\cal A}$ with vertex set $\{\eta_{j,i}\}_{j\ge j_{\min}, i\in
\tilde I_j}$, a number $\overline{s}=\overline{s}(a, \, d)\in \N$
and a partition $\{\Omega [\eta_{j,i}]\} _{j\ge j_{\min}, i\in
\tilde I_j}$ of the domain $\Omega$ with following properties:

\begin{enumerate}
\item $\eta_{j_{\min},1}$ is the minimal vertex in ${\cal A}$, and for any
$j\ge j_{\min}$ the family of sets $\{{\bf V}_1^{\cal
A}(\eta_{j,i})\}_{i\in \tilde I_j}$ form the partition of
$\{\eta_{j+1,t}\}_{t\in \tilde I_{j+1}}$.
\item For any $j\ge j_{\min}$, $i\in \tilde I_j$ we have $\Omega
[\eta_{j,i}] \in {\bf FC}(b_*)$ with $b_*=b_*(a, \, d)>0$.
\item ${\rm diam}\, \Omega[\eta_{j,i}] \underset{a,d,c_0}{\asymp}
2^{-\overline{s}j}$.
\item For any $x\in \Omega[\eta_{j,i}]$ we have ${\rm dist}\, (x, \, \Gamma)
\underset{a,d,c_0}{\asymp} 2^{-\overline{s}j}$.
\item For any $j\ge j_{\min}$, $i\in \tilde I_j$, $j'\ge j$
we have ${\rm card}\, {\bf V}_{j'-j}^{{\cal A}}(\eta_{j,i})
\underset{a,d,c_0}{\lesssim} \frac{h(2^{-\overline{s}j})}
{h(2^{-\overline{s}j'})}$; in particular,
\begin{align}
\label{ctij} {\rm card}\, \tilde I_j \underset{a,d,c_0}{\lesssim}
\frac{h(2^{-\overline{s}j_{\min}})} {h(2^{-\overline{s}j})}.
\end{align}
\end{enumerate}

Suppose that conditions of Theorem \ref{th1}, \ref{th2} or
\ref{th3} hold (then we have (\ref{hths}) with
$c_0=c_0(\mathfrak{Z})$). We define weight functions $u$, $w: {\bf
V}({\cal A}) \rightarrow (0, \, \infty)$ as follows:
\begin{align}
\label{ueta_ji} u(\eta_{j,i})=u_j=g(2^{-\overline{s}j})\cdot
2^{-\left(r-\frac dp\right)\overline{s}j}, \quad w(\eta_{j,i})
=w_j = v(2^{-\overline{s}j})\cdot 2^{-\frac{d}{q}\overline{s}j}.
\end{align}
For each subtree ${\cal D}\subset {\cal A}$ we denote
$\Omega[{\cal D}]=\cup _{\xi \in {\bf V}({\cal D})} \Omega[\xi]$.
It was proved in \cite{vas_vl_raspr2, vas_sib, vas_w_lim} that for
any $j_0\ge j_{\min}$, $i_0\in \tilde I_{j_0}$ and for any vertex
$\eta_{j_0,i_0}$ there exists a linear continuous operator
$P_{\eta_{j_0,i_0}}:L_{q,v}(\Omega) \rightarrow {\cal
P}_{r-1}(\Omega)$ such that for any subtree ${\cal D}$ with
minimal vertex $\eta_{j_0,i_0}$ and for any function $f\in
W^r_{p,g}(\Omega)$
\begin{align}
\label{fpeta} \|f-P_{\eta_{j_0,i_0}}f\|_{L_{q,v}(\Omega[{\cal
D}])} \underset{\mathfrak{Z}}{\lesssim} C(j_0)
\left\|\frac{\nabla^r f}{g}\right\|_{L_p(\Omega[{\cal D}])};
\end{align}
here $C(j_0)$ is defined as follows.
\begin{enumerate}
\item \label{bvlq} Let $\beta_v<\frac{d-\theta}{q}$. Then (see
\cite{vas_vl_raspr2})
$$
C(j_0)=\sup _{j\ge j_0} u_jw_j \quad\text{for} \quad p\le q,$$$$
C(j_0)=\left(\sum \limits _{j\ge j_0} (u_jw_j)^{\frac{pq}{p-q}}
\frac{h(2^{-\overline{s}j_0})}{h(2^{-\overline{s}j})}
\right)^{\frac 1q-\frac 1p} \quad\text{for} \quad p> q.
$$
\item Let $\theta>0$, $\beta_v=\frac{d-\theta}{q}$. Then (see \cite{vas_w_lim})
\begin{align}
\label{cj0db} \begin{array}{c}
C(j_0)=2^{-(\delta-\beta)\overline{s}j_0}
(\overline{s}j_0)^{-\alpha+\frac 1q} \rho(\overline{s}j_0),\quad \text{if}\quad p<q\\
\text{or} \quad p\ge q, \quad \beta-\delta<-\theta\left(\frac
1q-\frac 1p\right), \end{array}
\end{align}
\begin{align}
\label{cj0db1} C(j_0)=2^{-\theta\left(\frac 1q-\frac
1p\right)\overline{s}j_0} (\overline{s}j_0)^{-\alpha+1+\frac
1q-\frac 1p} \rho(\overline{s}j_0) \quad \text{for} \; p\ge q, \;
\beta-\delta=-\theta\left(\frac 1q-\frac 1p\right).
\end{align}
\end{enumerate}

The magnitude $C(j_0)$ in Case \ref{bvlq} is estimated as follows.
If $\beta-\delta<-\theta\left(\frac 1q-\frac 1p\right)_+$, then
\begin{align}
\label{cj01} C(j_0) \underset{\mathfrak{Z}}{\lesssim}
2^{(\beta-\delta) \overline{s}j_0} (\overline{s}j_0)^{-\alpha}
\rho(\overline{s}j_0),
\end{align}
if $\beta-\delta=-\theta\left(\frac 1q-\frac 1p\right)_+$ and
$\alpha>(1-\gamma)\left(\frac 1q-\frac 1p\right)_+$, then
\begin{align}
\label{cj02thalp} C(j_0) \underset{\mathfrak{Z}}{\lesssim}
2^{-\theta\left(\frac 1q-\frac 1p\right)_+\overline{s}j_0}
j_0^{-\alpha+\left(\frac 1q-\frac 1p\right)_+} \rho(j_0)
\end{align}
(these estimates are proved in \cite{vas_vl_raspr2}). Suppose that
conditions of assertion 2 of Theorem \ref{th3} hold: i.e.,
$\theta=0$,
\begin{align}
\label{a1g} \beta-\delta = 0, \quad \alpha=(1-\gamma)\left(\frac
1q-\frac 1p\right)_+, \quad \lambda> (1-\nu) \left(\frac 1q-\frac
1p\right)_+.
\end{align}
Then
\begin{align}
\label{cj0l} C(j_0) \stackrel{(\ref{ghi_g0}),
(\ref{ueta_ji})}{\underset{\mathfrak{Z}}{\lesssim}} (\log
(\overline{s}j_0))^{-\lambda}, \quad p\le q,
\end{align}
\begin{align}
\label{cj0ll}
\begin{array}{c}
C(j_0) \stackrel{(\ref{def_h}), (\ref{ghi_g0}), (\ref{ueta_ji}),
(\ref{a1g})}{\underset{\mathfrak{Z}}{\lesssim}} (\overline{s}j_0)
^{\gamma\left(\frac 1q-\frac 1p\right)} [\log
(\overline{s}j_0)]^{\nu \left(\frac 1q-\frac 1p\right)} \left(
\sum \limits _{j\ge j_0} (\overline{s}j)^{-1} (\log
(\overline{s}j))^{-\lambda \frac{pq}{p-q}-\nu}\right)^{\frac
1q-\frac 1p}\underset{\mathfrak{Z}}{\lesssim}
\\
\underset{\mathfrak{Z}}{\lesssim}  (\overline{s}j_0)
^{\gamma\left(\frac 1q-\frac 1p\right)} [\log (\overline{s}j_0)]
^{-\lambda+\frac 1q-\frac 1p}, \quad p>q.
\end{array}
\end{align}

In proofs of lower estimates of entropy numbers we use the
following assertions.

\begin{Lem}
\label{low} Let $\Omega\subset \R^d$ be a domain, let
$W^r_{p,g}(\Omega) \subset L_{q,v}(\Omega)$, let $G_1, \, \dots,
\, G_m\subset \Omega$ be pairwise non-overlapping sets, and let
$\psi_1, \, \dots, \, \psi_m\in W^r_{p,g}(\Omega)$,
$\left\|\frac{\nabla^r\psi_j}{g}\right\|_{L_p(\Omega)}=1$, ${\rm
supp} \, \psi_j\subset G_j$,
\begin{align}
\label{psi_j_yq_ge_m} \|\psi_j\|_{L_{q,v}(G_j)}\ge M,\quad 1\le
j\le m.
\end{align}
Let $X={\rm span}\, \{\psi_j\}_{j=1}^m$ be equipped with norm
$\|f\|_X=\left\|\frac{\nabla^r f}{g}\right\|_{L_p(\Omega)}$, and
let ${\rm id}:X\rightarrow L_{q,v}(\Omega)$ be the embedding
operator. Then for any $n\in \{0, \, \dots, \, m\}$
$$
e_n({\rm id}:X\rightarrow L_{q,v}(\Omega)) \ge M\cdot
e_n(I_m:l^m_p\rightarrow l^m_q).
$$
In particular, if $X \subset \hat {\cal W}^r_{p,g}(\Omega)$, then
$$
e_n(I:\hat {\cal W}^r_{p,g}(\Omega)\rightarrow L_{q,v}(\Omega))
\ge M\cdot e_n(I_m:l^m_p\rightarrow l^m_q).
$$
\end{Lem}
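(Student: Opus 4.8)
The plan is to realise $M\cdot I_m$ as a composition $R\circ{\rm id}\circ J$ in which $J$ is an isometric isomorphism onto $X$ and $\|R\|\le 1$, and then read off the lower bound from the homogeneity of entropy numbers together with the multiplicativity property~(\ref{mult_n}). We may assume $M>0$, since for $M=0$ the assertion is trivial.

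First I would introduce $J\colon l_p^m\to X$, $J(a)=\sum_{j=1}^m a_j\psi_j$ for $a=(a_1,\dots,a_m)$. Since $\supp\psi_j\subset G_j$ and the sets $G_1,\dots,G_m$ are pairwise non-overlapping, the functions $\nabla^r\psi_1,\dots,\nabla^r\psi_m$ have pairwise non-overlapping supports, so that
$$
\|J(a)\|_X=\left\|\frac{\nabla^r J(a)}{g}\right\|_{L_p(\Omega)}=\|a\|_{l_p^m}
$$
(a sum of $p$-th powers if $p<\infty$, a maximum if $p=\infty$), using $\|\nabla^r\psi_j/g\|_{L_p(\Omega)}=1$. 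Hence $\|\cdot\|_X$ is indeed a norm on $X$, $J$ is an isometric isomorphism of $l_p^m$ onto $X$, and $J$ maps the unit ball of $l_p^m$ onto the unit ball $B_X$ of $X$; consequently ${\rm id}(B_X)=({\rm id}\circ J)(B_{l_p^m})$, whence $e_n({\rm id})=e_n({\rm id}\circ J\colon l_p^m\to L_{q,v}(\Omega))$.

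Next I would construct $R\colon L_{q,v}(\Omega)\to l_q^m$ with $\|R\|\le1$ and $R\psi_j=Me_j$, where $e_1,\dots,e_m$ is the standard basis of $\R^m$. For every $j$, by the Hahn--Banach theorem (one may also take $\ell_j$ explicitly, $\ell_j(f)=\|\psi_j\|_{L_{q,v}(G_j)}^{1-q}\int_{G_j}f\,\sgn(\psi_j)\,|\psi_j|^{q-1}v^q$) choose a functional $\ell_j$ on $L_{q,v}(G_j)$ with $\|\ell_j\|\le1$ and $\ell_j(\psi_j|_{G_j})=\|\psi_j\|_{L_{q,v}(G_j)}$, and set
$$
R(f)=\left(\frac{M}{\|\psi_j\|_{L_{q,v}(G_j)}}\,\ell_j(f|_{G_j})\right)_{j=1}^m,\qquad f\in L_{q,v}(\Omega).
$$
Since $\psi_j|_{G_k}=0$ for $k\ne j$, this gives $R\psi_j=Me_j$. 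Moreover $\|\ell_j\|\le1$ yields $|\ell_j(f|_{G_j})|\le\|f|_{G_j}\|_{L_{q,v}(G_j)}$, so by $\|\psi_j\|_{L_{q,v}(G_j)}\ge M$ and the pairwise non-overlapping property of the $G_j$,
$$
\|R(f)\|_{l_q^m}^q\le\sum_{j=1}^m\|f|_{G_j}\|_{L_{q,v}(G_j)}^q\le\|f\|_{L_{q,v}(\Omega)}^q,
$$
i.e.\ $\|R\|\le1$.

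Finally $R\circ{\rm id}\circ J=M\cdot I_m$, so by the homogeneity of entropy numbers and the first inequality in~(\ref{mult_n}),
$$
M\,e_n(I_m)=e_n(M\cdot I_m)=e_n\bigl(R\circ({\rm id}\circ J)\bigr)\le\|R\|\,e_n({\rm id}\circ J)\le e_n({\rm id}),
$$
which is the first assertion. For the ``in particular'' part, if $X\subset\hat{\cal W}^r_{p,g}(\Omega)$ then, since the two norms agree on $X$, the unit ball $B_X$ is contained in the unit ball of $\hat{\cal W}^r_{p,g}(\Omega)$, so that any covering of $I(B_{\hat{\cal W}^r_{p,g}(\Omega)})$ covers ${\rm id}(B_X)$ and hence $e_n({\rm id})\le e_n(I)$; combining this with the previous display completes the proof. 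The only points requiring a little care are the disjoint-support identities (valid for $p=\infty$ with maxima in place of sums) and the fact that $R$ should be defined on all of $L_{q,v}(\Omega)$, not merely on ${\rm span}\,\{\psi_j\}$, so that~(\ref{mult_n}) applies directly; neither is a genuine obstacle --- this is the usual ``disjoint bump functions'' reduction.
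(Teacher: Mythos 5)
Your proof is correct and is essentially the standard ``disjoint bump functions'' factorization $M I_m = R\circ {\rm id}\circ J$ that the paper also invokes (the paper merely cites the analogous lower-bound argument for $n$-widths in \cite{peak_lim} and notes the proof is similar). All the steps check out: the disjoint supports make $J$ an isometric isomorphism of $l_p^m$ onto $X$; the functionals $\ell_j$ give $\|R\|\le 1$ and $R\psi_j = Me_j$ using $\|\psi_j\|_{L_{q,v}(G_j)}\ge M$; and the monotonicity $e_n({\rm id})\le e_n(I)$ when $B_X\subset B_{\hat{\cal W}^r_{p,g}(\Omega)}$ completes the ``in particular'' clause.
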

This lemma is proved similarly as the lower estimate of $n$-widths
in \cite{peak_lim}. Similarly from Theorem \ref{kuhn_trm} we
obtain
\begin{Cor}
\label{low_cor} Let $\Omega\subset \R^d$ be a domain, let
$W^r_{p,g}(\Omega) \subset L_{q,v}(\Omega)$, let $G_j\subset
\Omega$, $j\in \N$, be pairwise non-overlapping sets, and let
$\psi_j\in W^r_{p,g}(\Omega)$,
$\left\|\frac{\nabla^r\psi_j}{g}\right\|_{L_p(\Omega)}=1$, ${\rm
supp} \, \psi_j\subset G_j$,
\begin{align}
\label{psi_j_yq_ge_m1} \|\psi_j\|_{L_{q,v}(G_j)}\ge M_j,\quad j\in
\N.
\end{align}
Let $X={\rm span}\, \{\psi_j\}_{j=1}^\infty \cap {\rm span}\,
W^r_{p,g}(\Omega)$ be equipped with norm
$\|f\|_X=\left\|\frac{\nabla^r f}{g}\right\|_{L_p(\Omega)}$, and
let ${\rm id}:X\rightarrow L_{q,v}(\Omega)$ be the embedding
operator. Denote $\omega_n=\left(\sum \limits _{j=n}^\infty
M_j^{\frac{pq}{p-q}}\right)^{\frac 1q-\frac 1p}$. Suppose that
there exists $C\ge 1$ such that $\omega_n \le C \omega_{2n}$ for
any $n\in \N$. Then for any $n\in \N$ we have
$$
e_n({\rm id}:X\rightarrow L_{q,v}(\Omega)) \underset{p,q,C}
{\gtrsim} \omega_n.
$$
In particular, if $X \subset \hat {\cal W}^r_{p,g}(\Omega)$, then
$$
e_n(I:\hat {\cal W}^r_{p,g}(\Omega)\rightarrow L_{q,v}(\Omega))
\underset{p,q,C} {\gtrsim} \omega_n.
$$
\end{Cor}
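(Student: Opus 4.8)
The plan is to deduce the lower bound from K\"uhn's estimate (Theorem~\ref{kuhn_trm}) for the diagonal operator $D_M\colon l_p\to l_q$ with diagonal $M=(M_j)_{j\in\N}$, exactly as the lower bound in Lemma~\ref{low} is deduced from Sch\"utt's estimate (Theorem~\ref{shutt_trm}) for $I_m\colon l_p^m\to l_q^m$. Note that necessarily $q<p$ here (otherwise the exponent $\frac1q-\frac1p$ and Theorem~\ref{kuhn_trm} make no sense). First I would dispose of the degenerate case: if $M\notin l_{pq/(p-q)}$, then $\omega_n=+\infty$ for every $n$ and there is nothing to prove; so assume $M\in l_{pq/(p-q)}$. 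Then all $\omega_n$ are finite, since $n\mapsto\omega_n$ is non-increasing. Indices $j$ with $M_j=0$ contribute nothing to any $\omega_n$, so we may also assume $M_j>0$ for all $j$.

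Next I would build two auxiliary operators. For each $j$, pick (Hahn--Banach, using $1\le q<\infty$) a norming functional $\phi_j$ of $\psi_j|_{G_j}$ in $L_{q,v}(G_j)$, so that $\langle\psi_j,\phi_j\rangle=\|\psi_j\|_{L_{q,v}(G_j)}$, $\|\phi_j\|=1$, and set $e_j^*=\phi_j/\|\psi_j\|_{L_{q,v}(G_j)}$; then $\langle\psi_j,e_j^*\rangle=1$ and $|\langle f,e_j^*\rangle|\le M_j^{-1}\|f\|_{L_{q,v}(G_j)}$ by (\ref{psi_j_yq_ge_m1}). Define $\pi\colon L_{q,v}(\Omega)\to l_q$ by $\pi f=(M_j\langle f|_{G_j},e_j^*\rangle)_{j\in\N}$; since the $G_j$ are pairwise non-overlapping, $\|\pi f\|_{l_q}^q\le\sum_j\|f\|_{L_{q,v}(G_j)}^q\le\|f\|_{L_{q,v}(\Omega)}^q$, i.e.\ $\|\pi\|\le1$. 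In the other direction, for each $m\in\N$ let $\iota_m\colon l_p^m\to X$, $\iota_m(x)=\sum_{j=1}^m x_j\psi_j$; by the disjointness of the supports and the normalization $\|\nabla^r\psi_j/g\|_{L_p(\Omega)}=1$ one has $\|\iota_m x\|_X=\|x\|_{l_p^m}$, so $\iota_m$ is an isometric embedding into $X$, and in particular $\iota_m(B_{l_p^m})\subset B_X$. Since ${\rm supp}\,\psi_k\subset G_k$ forces $\langle\psi_j,e_k^*\rangle=\delta_{jk}$, a direct computation gives $(P_m\circ\pi)\circ{\rm id}\circ\iota_m=D_{M,m}\colon l_p^m\to l_q^m$, the diagonal operator with entries $M_1,\dots,M_m$ ($P_m$ being the coordinate projection $l_q\to l_q^m$). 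As $\|P_m\pi\|\le1$ and $\iota_m(B_{l_p^m})\subset B_X$, every $\varepsilon$-covering of ${\rm id}(B_X)$ yields one of $D_{M,m}(B_{l_p^m})$, whence $e_n(D_{M,m}\colon l_p^m\to l_q^m)\le e_n({\rm id}\colon X\to L_{q,v}(\Omega))$ for all $n,m$.

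It remains to pass from the truncations $D_{M,m}$ to $D_M$ and apply Theorem~\ref{kuhn_trm}. Embedding $l_p^m\hookrightarrow l_p^{m+1}$ isometrically and projecting $l_q^{m+1}\to l_q^m$ shows that $m\mapsto e_n(D_{M,m})$ is non-decreasing, while $D_{M,m}=(\text{projection})\circ D_M$ gives $e_n(D_{M,m})\le e_n(D_M)$; conversely, by (\ref{eklspt}), $e_n(D_M)\le e_n(D_{M,m})+e_1(D_M-D_{M,m})=e_n(D_{M,m})+\|D_M-D_{M,m}\|_{l_p\to l_q}$, and for $q<p$ one has $\|D_M-D_{M,m}\|_{l_p\to l_q}=\bigl(\sum_{j>m}M_j^{pq/(p-q)}\bigr)^{1/q-1/p}=\omega_{m+1}\to0$. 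Letting $m\to\infty$ yields $\sup_m e_n(D_{M,m})=e_n(D_M\colon l_p\to l_q)$. The hypothesis $\omega_n\le C\omega_{2n}$ is precisely the assumption of Theorem~\ref{kuhn_trm} with $\sigma=M$, so $e_n(D_M\colon l_p\to l_q)\underset{C,p,q}{\asymp}\omega_n$. Combining everything,
\begin{align*}
e_n({\rm id}\colon X\to L_{q,v}(\Omega))\ \ge\ \sup_m e_n(D_{M,m})\ =\ e_n(D_M\colon l_p\to l_q)\ \underset{C,p,q}{\gtrsim}\ \omega_n .
\end{align*}
For the final ``in particular'' statement: when $X\subset\hat{\cal W}^r_{p,g}(\Omega)$ the two norms coincide and ${\rm id}=I|_X$, so ${\rm id}(B_X)\subset I(B_{\hat{\cal W}^r_{p,g}(\Omega)})$ and hence $e_n({\rm id})\le e_n(I)$.

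I expect the only genuinely delicate point to be the truncation step in the last paragraph — identifying $\sup_m e_n(D_{M,m})$ with $e_n(D_M\colon l_p\to l_q)$, which is what makes the infinite-dimensional Theorem~\ref{kuhn_trm} directly applicable; everything else is the same disjoint-support bookkeeping already present in the proof of Lemma~\ref{low}. One should also take care that the reduction to $M_j>0$ is harmless, which it is, since coordinates with $M_j=0$ affect neither $\omega_n$ nor (after discarding the corresponding $\psi_j$) the space $X$.
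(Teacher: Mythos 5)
Your proposal is correct and follows exactly the route the paper indicates: just as Lemma \ref{low} is proved by factoring the finite diagonal operator through $\mathrm{id}$ and appealing to Sch\"utt's theorem, Corollary \ref{low_cor} factors $D_M\colon l_p\to l_q$ through $\mathrm{id}$ and appeals to K\"uhn's Theorem \ref{kuhn_trm}, which is precisely what the paper means by ``similarly from Theorem \ref{kuhn_trm} we obtain.'' The one step the paper leaves implicit and you spell out carefully (the truncation $\sup_m e_n(D_{M,m})=e_n(D_M)$, via $e_n(D_M)\le e_n(D_{M,m})+\|D_M-D_{M,m}\|$ and the explicit formula for the norm of a diagonal operator when $q<p$) is handled correctly.
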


\renewcommand{\proofname}{\bf Proof of Theorems \ref{th1}, \ref{th2}, \ref{th3}}
\begin{proof}
{\it The upper estimate.} We set $\hat \Theta
=\{\Omega[\eta_{j,i}]\}_{j\ge j_{\min},i\in \tilde I_j}$, $\hat
F(\eta_{j,i}) =\Omega[\eta_{j,i}]$, $X_p(\Omega) ={\rm span}\,
W^r_{p,g}(\Omega)$, $\hat X_p(\Omega) =\hat {\cal
W}^r_{p,g}(\Omega)$, $Y_q(\Omega)=L_{q,v}(\Omega)$, ${\cal
P}(\Omega) ={\cal P}_{r-1}(\Omega)$.

Similarly as in \cite[p. 49]{vas_width_raspr} we can prove that
Assumption 2 holds with $\delta_*=\frac{\delta}{d}$ and $\tilde
w_*(\eta_{j,i}) \underset{\mathfrak{Z}}{\lesssim} u_jw_j$.

Consider the following cases.
\begin{enumerate}
\item Suppose that one of the following conditions holds:
\begin{itemize}
\item $\beta -\delta< -\theta \left(\frac 1q-\frac 1p\right)_+$,
\item $\beta -\delta= -\theta\left(\frac 1q-\frac 1p\right)_+$, $p\ge
q$,
\item $\theta=0$, $\beta-\delta=0$, $\alpha>(1-\gamma)\left(\frac
1q -\frac 1p\right)_+$.
\end{itemize}
Then the partition $\{{\cal A}_{t,i}\}_{t\ge t_0,i\in \hat J_t}$
is constructed similarly as in \cite[p. 49--50]{vas_width_raspr},
and by properties 1--5 of the tree ${\cal A}$ and (\ref{fpeta}),
(\ref{cj0db}), (\ref{cj0db1}), (\ref{cj01}), (\ref{cj02thalp}) we
get that Assumptions 1 and 3 hold. Here $\lambda_*$, $\gamma_*$
and $\psi_*$ are the same as in \cite{vas_width_raspr} (see Cases
1, 3, 4, and Case 2 for $p\ge q$; in Cases 1 and 4 we take
$\Lambda(x) =|\log x|^{\gamma} \tau(|\log x|)$). If
$\beta_v<\frac{d-\theta}{q}$, then the function $u_*$ is the same
as in \cite{vas_width_raspr} (in Cases 1 and 4 we take
$\Psi(x)=|\log x|^{-\alpha} \rho(|\log x|)$). If
$\beta_v=\frac{d-\theta}{q}$, then $u_*(y)=(\log y)^{-\alpha+\frac
1q}\rho(\log y)$ for $p<q$ or $\beta-\delta<-\theta\left(\frac
1q-\frac 1p\right)_+$, and $u_*(y)=(\log y)^{-\alpha+1+\frac
1q-\frac 1p} \rho(\log y)$ for $\beta-\delta=-\theta\left(\frac
1q-\frac 1p\right)_+$ and $p\ge q$ (recall that for
$\beta_v=\frac{d-\theta}{q}$ we consider only the case
$\theta>0$).

Applying Theorem \ref{trm1} and Lemma \ref{log}, we get the upper
estimate in assertions 1 and 2a of Theorem \ref{th1}, in Theorem
\ref{th2} and in assertion 1 of Theorem \ref{th3}.

\item Let $\theta>0$, $\beta-\delta=-\theta\left(\frac
1q-\frac 1p\right)_+$, $p<q$. Denote by $\Gamma_t$ the maximal
subgraph in ${\cal A}$ on the vertex set
$$
\{\eta_{j,l}:\; 2^{t-1}<\overline{s}j\le 2^t, \quad l\in \tilde
I_j\},
$$
and by $\{{\cal A}_{t,i}\}_{i\in \hat J_t}$ we denote the set of
connected components of $\Gamma_t$. We set $t_0=\min \{t\in
\Z_+:\; {\bf V}(\Gamma_t)\ne \varnothing\}$. By (\ref{def_h}) and
(\ref{ctij}), ${\rm card}\, {\bf V}(\Gamma_t)
\underset{\mathfrak{Z}_0}{\lesssim} 2^{\theta \cdot 2^t}
2^{-\gamma t} \tau^{-1}(2^t)$. This together with (\ref{fpeta}),
(\ref{cj0db}) and (\ref{cj02thalp}) implies that Assumptions 1 and
3 hold with $\lambda_*=-\alpha_0$ (see assertion 2b of Theorem
\ref{th1}), $u_*(y)=\rho(y)$, $\gamma_*=\theta$, $\psi_*(y)=(\log
y)^{-\gamma} \tau^{-1}(\log y)$ in (\ref{nu_t_k1}). Applying
Theorem \ref{trm2}, we get the upper estimate in assertion 2b of
Theorem \ref{th1}.

\item Let $\theta=0$, $\beta-\delta=0$,
$\alpha=(1-\gamma)\left(\frac 1q-\frac 1p\right)_+$.

\begin{enumerate}
\item Suppose that $p\ge q$. We define the partition
$\{{\cal A}_{t,i}\}_{t\ge t_0,i\in \hat J_t}$ similarly as in
\cite[p. 50]{vas_width_raspr} (see Case 3). Then Assumptions 1 and
3 hold with $\gamma_*=1-\gamma$, $\lambda_*=\alpha+\frac 1p-\frac
1q=-\gamma\left(\frac 1q-\frac 1p\right)$,
$\mu_*=\alpha=(1-\gamma)\left(\frac 1q-\frac 1p\right)$,
$\psi_*(x)=(\log x)^{-\nu}$, $u_*(x)=(\log x)^{-\lambda+\frac
1q-\frac 1p}$ (see (\ref{cj0l}), (\ref{cj0ll})). Hence,
$\beta_*=\frac{1}{1-\gamma}$, $\varphi_*(x)=(\log
x)^{\frac{\nu}{1-\gamma}}$ (see Lemma \ref{log}). In addition,
${\rm card}\, \hat J_t
\stackrel{(\ref{ctij})}{\underset{\mathfrak{Z}}{\lesssim}}
2^{-\gamma t} t^{-\nu}$. The relations (\ref{bipf4684gn}) and
(\ref{2ll}) follow from the conditions
$\alpha=(1-\gamma)\left(\frac 1q-\frac 1p\right)$ and
$\lambda>(1-\nu) \left(\frac 1q-\frac 1p\right)$. Let us check
that (\ref{2l}) holds. Indeed, $2^{-\lambda_*k_*t}
u_*(2^{k_*t})=2^{\gamma\left(\frac 1q-\frac 1p\right)t}
t^{-\lambda+\frac 1q-\frac 1p}$. Since the function $h$ is
non-decreasing, we have $\gamma\le 0$; moreover, $\nu\le 0$ for
$\gamma=0$. If $\gamma<0$ and $p>q$, then (\ref{2l}) follows from
the inequality $\gamma\left(\frac 1q-\frac 1p\right)<0$. If
$\gamma=0$ and $p>q$, then (\ref{2l}) follows from the inequality
$-\lambda+\frac 1q-\frac 1p< \nu\left(\frac 1q-\frac 1p\right)\le
0$. If $p=q$, then the assertion follows from the inequality
$\lambda>0$.

Notice that for $p=q$ we have $\lambda_*=\mu_*$. Since
$\mu_*\beta_*=\frac 1q-\frac 1p<\frac{\delta}{d}=\delta_*$, we get
by Theorem \ref{trm1} that
$$
e_n(I:\hat {\cal W}^r_{p,g}(\Omega) \rightarrow L_{q,v}(\Omega))
\underset{\mathfrak{Z}}{\lesssim} u_*(n^{\beta_*}\varphi_*(n))
\varphi_*^{-\mu_*}(n) \underset{\mathfrak{Z}}{\asymp} (\log
n)^{-\lambda+ \frac 1q-\frac 1p-\nu\left(\frac 1q-\frac
1p\right)}.
$$
Thus, we obtain the upper estimate in assertion 2a of Theorem
\ref{th3}.

\item Let $p<q$. We set ${\bf V}(\Gamma_t)=\{\eta_{j,i}:\;
2^{2^{t-1}}<j\le 2^{2^t}\}$ and denote by $\{{\cal
A}_{t,i}\}_{i\in \hat J_t}$ the set of connected components of the
graph $\Gamma_t$. Then
$$
{\rm card}\, {\bf V}(\Gamma_t) \underset{\mathfrak{Z}}{\lesssim}
\sum \limits _{2^{2^{t-1}}+1\le \overline{s}j\le2^{2^t}}
(\overline{s}j)^{-\gamma} (\log (\overline{s}j))^{-\nu}
\underset{\mathfrak{Z}}{\lesssim} 2^{(1-\gamma)2^t} 2^{-\nu t}
=:\overline{\nu}_t
$$
(i.e., (\ref{nu_t_k1}) holds). This together with (\ref{cj0l})
implies that Assumptions 1 and 3 hold with
$\lambda_*=\mu_*=\lambda$, $u_*\equiv 1$. Applying Theorem
\ref{trm2}, we get the upper estimate in assertion 2b of Theorem
\ref{th3}.
\end{enumerate}
\end{enumerate}

{\it The lower estimate.} Similarly as in \cite[p.
50]{vas_width_raspr} we can prove that
$$
e_n(I:\hat {\cal W}^r_{p,g}(\Omega) \rightarrow L_{q,v}(\Omega))
\underset{\mathfrak{Z}_*}{\gtrsim} e_n(I:\hat {\cal W}^r_p([0, \,
1]^d) \rightarrow L_q([0, \, 1]^d)) \underset{p,q,r,d}{\asymp}
n^{-\frac{\delta}{d} +\frac 1q-\frac 1p}.
$$

This gives the desired lower estimates in Theorem \ref{th2}, in
assertion 1 of Theorem \ref{th1} for $\frac{\delta}{d}<
\frac{\delta-\beta}{\theta}$ and in assertion 1 of Theorem
\ref{th3} for $\frac{\delta}{d}< \frac{\alpha}{1-\gamma}$.

In other cases we apply Lemma \ref{low} or Corollary
\ref{low_cor}.

In \cite[p. 50]{vas_width_raspr}, \cite{vas_vl_raspr2} and
\cite{vas_w_lim} the number $k_{**}=k_{**}(\mathfrak{Z}_*) \in \N$
is defined and the functions $\{\psi_{t,j}\}_{j\in J_t}\in
C^\infty(\R^d)$ are constructed with the following properties:
\begin{align}
\label{cjt_low} {\rm card}\, J_t
\underset{\mathfrak{Z}_*}{\gtrsim} 2^{\theta k_{**} t}
(k_{**}t)^{-\gamma} \tau^{-1}(k_{**}t),
\end{align}
$\left\|\frac{\nabla^r \psi_{t,j}}{g}\right\|_{L_p(\Omega)}=1$.
Moreover, $\|\psi_{t,j}\|_{L_{q,v}(\Omega)}$ is estimated from
below as follows.
\begin{enumerate}
\item If $\beta_v<\frac{d-\theta}{q}$, then
\begin{align}
\label{psitj1}
\|\psi_{t,j}\|_{L_{q,v}(\Omega)}\underset{\mathfrak{Z}_*}{\gtrsim}
2^{k_{**}t(\beta-\delta)} (k_{**}t)^{-\alpha} \rho(k_{**}t).
\end{align}
\item Let $\theta>0$, $\beta_v=\frac{d-\theta}{q}$; in addition, we suppose that $p<q$ or $p\ge
q$, $\beta-\delta<-\theta\left(\frac 1q-\frac 1p\right)_+$. Then
\begin{align}
\label{psitj2}
\|\psi_{t,j}\|_{L_{q,v}(\Omega)}\underset{\mathfrak{Z}_*}{\gtrsim}
2^{k_{**}t(\beta-\delta)} (k_{**}t)^{-\alpha+\frac 1q}
\rho(k_{**}t).
\end{align}
\item If $\theta>0$, $p\ge q$, $\beta_v=\frac{d-\theta}{q}$ and $\beta-\delta=-\theta\left(\frac 1q-\frac
1p\right)$, then
\begin{align}
\label{psitj3}
\|\psi_{t,j}\|_{L_{q,v}(\Omega)}\underset{\mathfrak{Z}_*}{\gtrsim}
2^{-\theta\left(\frac 1q-\frac 1p\right)k_{**}t}
(k_{**}t)^{-\alpha+\frac 1q+1-\frac 1p} \rho(k_{**}t).
\end{align}
\end{enumerate}
In addition, in Case 1 the supports of $\psi_{t,j}$ do not overlap
pairwise for different $(t, \, j)$; in Cases 2, 3 for any $t$ the
supports of $\psi_{t,j}$ do not overlap pairwise for different
$j$.

Moreover, it follows from the construction of functions
$\psi_{t,i}$ that for any $x\in {\rm supp}\, \psi_{t,j}$
$${\rm dist}\, (x, \, \Gamma) \underset{\mathfrak{Z}_*}{\lesssim}
2^{-k_{**}t}.$$ Hence, by Remark \ref{peq0}, there exists $\hat
t=\hat t(\mathfrak{Z}_*)\in \N$ such that for $t\ge \hat t$ we
have $P\psi_{t,i}=0$ and $\psi_{t,i}\in \hat W^r_{p,g}(\Omega)$.

\begin{enumerate}
\item Suppose that $\theta>0$.
\begin{enumerate}
\item Let $\beta-\delta<-\theta\left(\frac 1q-\frac
1p\right)_+$. Let $\alpha_0$ be as defined in assertion 1 of
Theorem \ref{th1}. We take $t_n$ such that
\begin{align}
\label{2tnn} n\le 2^{\theta k_{**}t_n}(k_{**}t_n)^{-\gamma}
\tau^{-1}(k_{**}t_n) \underset{\mathfrak{Z}_*}{\lesssim} n.
\end{align}
Then
\begin{align}
\label{2ksstn} 2^{k_{**}t_n}\underset{\mathfrak{Z}_*}{\asymp}
n^{\frac{1}{\theta}} (\log n)^{\frac{\gamma}{\theta}}
\tau^{\frac{1}{\theta}} (\log n)
\end{align}
(see Lemma \ref{log}) and
\begin{align}
\label{ksstn} k_{**}t_n \underset{\mathfrak{Z}_*}{\asymp} \log n.
\end{align}
This together with Theorem \ref{shutt_trm} and Lemma \ref{low}
yields that
$$
e_n(I:\hat {\cal W}^r_{p,g}(\Omega) \rightarrow L_{q,v}(\Omega))
\stackrel{(\ref{psitj1}),
(\ref{psitj2})}{\underset{\mathfrak{Z}_*}{\gtrsim}} n^{\frac
1q-\frac 1p} \cdot 2^{k_{**}t_n(\beta-\delta)}
(k_{**}t_n)^{-\alpha_0}
\rho(k_{**}t_n)\underset{\mathfrak{Z}_*}{\asymp}
$$
$$
\asymp n^{\frac{\beta-\delta}{\theta}+\frac 1q-\frac 1p} (\log
n)^{\frac{(\beta-\delta)\gamma}{\theta} -\alpha_0} \rho(\log n)
\tau^{\frac{\beta-\delta}{\theta}}(\log n).
$$
This implies the lower estimate in assertion 1 of Theorem
\ref{th1}.
\item Let $\beta-\delta=-\theta\left(\frac 1q-\frac
1p\right)$, $p\ge q$. We take $t_n$ such that (\ref{2tnn}) holds.

If $\beta_v<\frac{d-\theta}{q}$ and $p=q$, then Theorem
\ref{shutt_trm} and Lemma \ref{low} yield that
$$
e_n(I:\hat {\cal W}^r_{p,g}(\Omega) \rightarrow L_{q,v}(\Omega))
\stackrel{(\ref{psitj1}),(\ref{ksstn})}{\underset{\mathfrak{Z}_*}{\gtrsim}}
(\log n)^{-\alpha} \rho(\log n).
$$
Let $\beta_v<\frac{d-\theta}{q}$, $p>q$. We apply Corollary
\ref{low_cor} and get that there exists such
$m_0=m_0(\mathfrak{Z}_*)$ that
$$
e_n(I:\hat {\cal W}^r_{p,g}(\Omega) \rightarrow L_{q,v}(\Omega))
\stackrel{(\ref{psitj1})}{\underset{\mathfrak{Z}_*}{\gtrsim}}
$$
$$
\gtrsim \left(\sum \limits _{t\ge t_n+m_0} \sum \limits _{i\in
J_t} 2^{-k_{**}t\theta} (k_{**}t)^{\frac{pq}{p-q}\alpha} \rho
^{\frac{pq}{p-q}} (k_{**}t) \right)^{\frac 1q-\frac 1p}
\stackrel{(\ref{cjt_low})}{\underset{\mathfrak{Z}_*}{\gtrsim}}
$$
$$
\gtrsim \left(\sum \limits _{t\ge t_n+m_0} (k_{**}t)^{-\gamma}
\tau^{-1}(k_{**}t)(k_{**}t)^{\frac{pq}{p-q}\alpha} \rho
^{\frac{pq}{p-q}} (k_{**}t) \right)^{\frac 1q-\frac 1p}
\underset{\mathfrak{Z}_*}{\gtrsim}
$$
$$
\gtrsim (k_{**}t_n)^{-\alpha +(1-\gamma) \left(\frac 1q-\frac
1p\right)} \rho(k_{**}t_n) \tau^{\frac 1p-\frac 1q}(k_{**}t_n)
\stackrel{(\ref{ksstn})}{\underset{\mathfrak{Z}_*}{\gtrsim}}
$$
$$
\gtrsim (\log n)^{-\alpha +(1-\gamma) \left(\frac 1q-\frac
1p\right)} \rho(\log n) \tau^{\frac 1p-\frac 1q}(\log n).
$$
Let $\beta_v=\frac{d-\theta}{q}$. Then Theorem \ref{shutt_trm} and
Lemma \ref{low} imply that
$$
e_n(I:\hat {\cal W}^r_{p,g}(\Omega) \rightarrow L_{q,v}(\Omega))
\stackrel{(\ref{psitj3})}{\underset{\mathfrak{Z}_*}{\gtrsim}}
2^{-\theta\left(\frac 1q-\frac 1p\right)k_{**}t_n}
(k_{**}t_n)^{-\alpha+\frac 1q+1-\frac 1p} \rho(k_{**}t_n) n^{\frac
1q-\frac 1p} \stackrel{(\ref{2ksstn}),
(\ref{ksstn})}{\underset{\mathfrak{Z}_*}{\gtrsim}}
$$
$$
\gtrsim (\log n)^{-\gamma\left(\frac 1q-\frac 1p\right)
-\alpha+\frac 1q+1-\frac 1p} \rho(\log n) \tau^{\frac 1p-\frac
1q}(\log n).
$$

Thus, we obtain the desired estimate in assertion 2a of Theorem
\ref{th1}.
\item Let $\beta-\delta=0$, $p< q$. We take $t_n$ such that
\begin{align}
\label{2tnn2} n^2\le 2^{\theta k_{**}t_n}(k_{**}t_n)^{-\gamma}
\tau^{-1} (k_{**}t_n) \underset{\mathfrak{Z}_*}{\lesssim} n^2.
\end{align}
Then $k_{**}t_n\underset{\mathfrak{Z}_*}{\asymp} \log n$. Applying
Theorem \ref{shutt_trm} and Lemma \ref{low}, we obtain that
$$
e_n(I:\hat {\cal W}^r_{p,g}(\Omega) \rightarrow L_{q,v}(\Omega))
\stackrel{(\ref{cjt_low}), (\ref{psitj1}), (\ref{psitj2}),
(\ref{2tnn2})}{\underset{\mathfrak{Z}_*}{\gtrsim}}
$$
$$
\gtrsim (k_{**}t_n)^{-\alpha_0} \rho(k_{**}t_n) n^{\frac 1q-\frac
1p} \log ^{\frac 1p-\frac 1q}\left(1+\frac{n^2}{n}\right)
\underset{\mathfrak{Z}_*}{\asymp} n^{\frac 1q-\frac 1p} (\log n)
^{-\alpha_0+\frac 1p-\frac 1q} \rho(\log n).
$$
Now we take $t_n$ such that
$$
2^n\le 2^{\theta k_{**}t_n}(k_{**}t_n)^{-\gamma} \tau^{-1}
(k_{**}t_n) \underset{\mathfrak{Z}_*}{\lesssim} 2^n.
$$
Then $k_{**}t_n\underset{\mathfrak{Z}_*}{\asymp} n$. Applying
Theorem \ref{shutt_trm} and Lemma \ref{low}, we obtain that
$$
e_n(I:\hat {\cal W}^r_{p,g}(\Omega) \rightarrow L_{q,v}(\Omega))
\stackrel{(\ref{psitj1}),
(\ref{psitj2})}{\underset{\mathfrak{Z}_*}{\gtrsim}}
(k_{**}t_n)^{-\alpha_0} \rho(k_{**}t_n)
\underset{\mathfrak{Z}_*}{\asymp} n^{-\alpha_0} \rho(n).
$$
Thus, we get desired estimates in assertion 2b of Theorem
\ref{th1}.
\end{enumerate}
\item Let $\theta=0$ and conditions of Theorem \ref{th3} hold.
Then
$$
\|\psi_{t,j}\|_{L_{q,v}(\Omega)}
\stackrel{(\ref{psitj1})}{\underset{\mathfrak{Z}_*} {\gtrsim}}
(k_{**}t)^{-\alpha} \log ^{-\lambda}(k_{**}t), \quad {\rm card}\,
J_t \stackrel{(\ref{cjt_low})}{\underset{\mathfrak{Z}_*}
{\gtrsim}} (k_{**}t)^{-\gamma} \log ^{-\nu}(k_{**}t).
$$
Hence, for $2^m\le k_{**}t<2^{m+1}$ and sufficiently large $m\in
\N$
\begin{align}
\label{psitj4} \|\psi_{t,j}\|_{L_{q,v}(\Omega)}
\underset{\mathfrak{Z}_*} {\gtrsim} 2^{-\alpha m} m^{-\lambda},
\quad {\rm card}\, \left(\cup _{2^m\le k_*t<2^{m+1}} J_t\right)
\underset{\mathfrak{Z}_*} {\gtrsim} 2^{m(1-\gamma)} m^{-\nu}.
\end{align}
\begin{enumerate}
\item Let $\alpha-(1-\gamma) \left(\frac 1q-\frac
1p\right)_+>0$. We take $m_n\in \N$ such that
\begin{align}
\label{2mnn} n\le 2^{m_n(1-\gamma)} m_n^{-\nu}
\underset{\mathfrak{Z}_*} {\lesssim} n.
\end{align}
Then $2^{m_n} \underset{\mathfrak{Z}_*}{\asymp}
n^{\frac{1}{1-\gamma}} (\log n)^{\frac{\nu}{1-\gamma}}$ (see Lemma
\ref{log}), $m_n \underset{\mathfrak{Z}_*}{\asymp} \log n$.
Applying Theorem \ref{shutt_trm} and Lemma \ref{low}, we get that
$$
e_n(I:\hat {\cal W}^r_{p,g}(\Omega) \rightarrow L_{q,v}(\Omega))
\stackrel{(\ref{psitj4})}{\underset{\mathfrak{Z}_*} {\gtrsim}}
2^{-\alpha m_n} m_n^{-\lambda} n^{\frac 1q-\frac
1p}\underset{\mathfrak{Z}_*}{\asymp}
$$
$$
\asymp n^{-\frac{\alpha}{1-\gamma}+\frac 1q-\frac 1p} (\log n)
^{-\frac{\alpha \nu}{1-\gamma} -\lambda}.
$$
Thus, we obtain the desired estimate in assertion 1 of Theorem
\ref{th3}.
\item Let $p\ge q$, $\alpha=(1-\gamma)\left(\frac 1q-\frac
1p\right)$. We define $m_n$ by (\ref{2mnn}). For $p=q$ we have
$\alpha=0$ and
$$
e_n(I:\hat {\cal W}^r_{p,g}(\Omega) \rightarrow
L_{q,v}(\Omega))\stackrel{(\ref{psitj4}),
(\ref{2mnn})}{\underset{\mathfrak{Z}_*} {\gtrsim}} (\log
n)^{-\lambda}.
$$
If $p>q$, then we apply Corollary \ref{low_cor} and get that there
exists $\hat m=\hat m(\mathfrak{Z}_*)$ such that
$$
e_n(I:\hat {\cal W}^r_{p,g}(\Omega) \rightarrow
L_{q,v}(\Omega))\stackrel{(\ref{psitj4})}{\underset{\mathfrak{Z}_*}
{\gtrsim}}
$$
$$
\gtrsim \left(\sum \limits _{m=m_n+\hat m}^\infty 2^{m(1-\gamma)}
m^{-\nu} \cdot 2^{-m \alpha \frac{pq}{p-q}} m^{-\lambda
\frac{pq}{p-q}}\right)^{\frac 1q-\frac
1p}\underset{\mathfrak{Z}_*}{\asymp}
$$
$$
\asymp m_n^{-\lambda+(1-\nu)\left(\frac 1q -\frac 1p\right)}
\underset{\mathfrak{Z}_*}{\asymp} (\log
n)^{-\lambda+(1-\nu)\left(\frac 1q -\frac 1p\right)}.
$$
Thus, we get the desired estimate in assertion 2a of Theorem
\ref{th3}.
\item Let $p< q$, $\alpha=0$. First we take $m_n$ such that
\begin{align}
\label{21gmn} 2^{(1-\gamma)m_n}m_n^{-\nu}
\underset{\mathfrak{Z}_*}{\asymp} n^2.
\end{align}
Then $m_n\underset{\mathfrak{Z}_*}{\asymp} \log n$. Applying
Theorem \ref{shutt_trm} and Lemma \ref{low}, we get
$$
e_n(I:\hat {\cal W}^r_{p,g}(\Omega) \rightarrow L_{q,v}(\Omega))
\stackrel{(\ref{psitj4}), (\ref{21gmn})}{\underset{\mathfrak{Z}_*}
{\gtrsim}}
$$
$$
\gtrsim m_n^{-\lambda} n^{\frac 1q-\frac 1p} \log ^{\frac1p -\frac
1q} \left(1+\frac{n^2}{n} \right)\underset{\mathfrak{Z}_*}
{\asymp} n^{\frac 1q-\frac 1p} (\log n)^{-\lambda+\frac 1p-\frac
1q}.
$$

Now we take $m_n$ such that $2^{(1-\gamma)m_n}m_n^{-\nu}
\underset{\mathfrak{Z}_*}{\asymp} 2^n$. Then
$m_n\underset{\mathfrak{Z}_*} {\asymp} n$. Applying Theorem
\ref{shutt_trm} and Lemma \ref{low}, we get
$$
e_n(I:\hat {\cal W}^r_{p,g}(\Omega) \rightarrow L_{q,v}(\Omega))
\stackrel{(\ref{psitj4})}{\underset{\mathfrak{Z}_*} {\gtrsim}}
m_n^{-\lambda} \underset{\mathfrak{Z}_*} {\asymp} n^{-\lambda}.
$$
Thus, we obtain the desired estimate in assertion 2b of Theorem
\ref{th3}.
\end{enumerate}
\end{enumerate}
This completes the proof of Theorems 1, 2, 3.
\end{proof}
\renewcommand{\proofname}{\bf Proof}

\section{Estimates of entropy numbers of weighted summation operators on a tree}

Let ${\cal A}$ be a tree, and let $f:{\bf V}({\cal A})\rightarrow
\R$. We set
$$
\|f\|_{l_p({\cal A})}=\left ( \sum \limits _{\xi \in {\bf V}
({\cal A})}|f(\xi)|^p\right )^{1/p}, \quad \text{if} \quad 1\le
p<\infty, \quad \|f\|_{l_\infty({\cal A})}=\sup _{\xi \in {\bf V}
({\cal A})} |f(\xi)|.
$$
Denote by \label{obozn_lpg}$l_p({\cal A})$ the space of functions
$f:{\bf V}({\cal A})\rightarrow \R$ with finite norm
$\|f\|_{l_p({\cal A})}$.

Let $u$, $w:{\bf V}({\cal A})\rightarrow [0, \, \infty)$ be weight
functions.

Define the summation operator $S_{u,w,{\cal A}}$ by
$$
S_{u,w,{\cal A}}f(\xi) = w(\xi)\sum \limits _{\xi'\le \xi}
u(\xi')f(\xi'), \quad \xi \in {\bf V}({\cal A}), \quad f:{\bf
V}({\cal A}) \rightarrow \R.
$$

In papers of Lifshits and Linde \cite{lifs_m, l_l, l_l1} estimates
for entropy numbers of the operator $S_{u,w,{\cal A}}: l_p({\cal
A}) \rightarrow l_\infty({\cal A})$ or its dual were obtained
under some conditions on $u$, $w$. Here we obtain order estimates
for entropy numbers of the operator $S_{u,w,{\cal A}}: l_p({\cal
A}) \rightarrow l_q({\cal A})$ for $1<p\le \infty$, $1\le
q<\infty$.

Let ${\bf V}({\cal A}) =\{\eta_{j,i}:\; j\in \Z_+, \;\; i\in
I_j\}$. In addition, we suppose that the family of sets $\{{\bf
V}_1^{\cal A}(\eta_{j,i})\}_{i\in I_j}$ forms the partition of the
set $\{\eta_{j+1,t}\}_{t\in I_{j+1}}$. Suppose that for some
$c_*\ge 1$, $m_*\in \N$
$$
c_*^{-1}\frac{h(2^{-m_*j})} {h(2^{-m_*(j+l)})}\le {\bf V}^{\cal
A}_l(\eta_{j,i})\le  c_*\frac{h(2^{-m_*j})} {h(2^{-m_*(j+l)})},
\quad j, \; l\in \Z_+.
$$
Here the function $h$ is defined by (\ref{def_h}), (\ref{yty}) in
some neighborhood of zero. Let $u$, $w:{\bf V}({\cal A})
\rightarrow (0, \, \infty)$, $u(\eta_{j,i})=u_j$,
$w(\eta_{j,i})=w_j$, $j\in \Z_+$, $i\in I_j$,
$$
u_j=2^{-\kappa _um_*j}(m_*j+1)^{-\alpha_u} \rho_u(m_*j+1), \quad
w_j=2^{-\kappa _wm_*j}(m_*j+1)^{-\alpha_w} \rho_w(m_*j+1),
$$
where $\rho_u$, $\rho_w:(0, \, \infty) \rightarrow (0, \, \infty)$
are absolutely continuous functions, $\lim \limits _{y\to \infty}
\frac{y\rho'_u(y)}{\rho_u(y)} = \lim \limits _{y\to \infty}
\frac{y\rho'_w(y)}{\rho_w(y)}=0$. Moreover, we suppose that
$1<p\le \infty$, $1\le q<\infty$,
\begin{align}
\label{muck_1} \kappa_w>\frac{\theta}{q} \quad \text{or} \quad
\kappa_w=\frac{\theta}{q}, \quad \alpha_w>\frac{1-\gamma}{q}.
\end{align}

We set $\kappa=\kappa_u+\kappa_w$, $\alpha=\alpha_u+\alpha_w$,
$\rho(y)=\rho_u(y)\rho_w(y)$, $\mathfrak{Z}=(p, \, q, \, u, \, w,
\, h, \, m_*, \, c_*)$.

Let ${\cal D}$ be a subtree in ${\cal A}$. Denote by
$\mathfrak{S}^{p,q}_{{\cal D},u,w}$ the operator norm of
$S_{u,w,{\cal D}}: l_p({\cal D}) \rightarrow l_q({\cal D})$.
Applying the results of \cite{vas_vl_raspr2}, \cite{vas_har_tree},
we get that for $j\ge 2$ and for any $i\in I_j$ we have
$\mathfrak{S}^{p,q}_{{\cal A}_{\eta_{j,i}},u,w}
\underset{\mathfrak{Z}}{\asymp} C(j)$, where $C(j)$ is defined as
follows.
\begin{enumerate}
\item Let $\kappa_w>\frac{\theta}{q}$, $\kappa>\theta\left(\frac 1q-\frac
1p\right)_+$. Then $C(j)=2^{-\kappa m_*j} (m_*j)^{-\alpha}
\rho(m_*j)$.
\item Let $\kappa_w>\frac{\theta}{q}$, $\kappa=\theta\left(\frac 1q-\frac
1p\right)_+$, $\alpha>(1-\gamma)\left(\frac 1q-\frac 1p\right)_+$.
Then $$C(j)=2^{-\theta\left(\frac 1q-\frac 1p\right)_+ m_*j}
(m_*j)^{-\alpha+\left(\frac 1q-\frac 1p\right)_+} \rho(m_*j).$$
\item Let $\theta>0$, $\kappa_w=\frac{\theta}{q}$. Suppose that either $\kappa>
\theta\left(\frac 1q-\frac 1p\right)_+$ or $\kappa=
\theta\left(\frac 1q-\frac 1p\right)_+$, $\alpha>\frac 1q$, $p<q$.
Then $C(j)=2^{-\kappa m_*j} (m_*j)^{-\alpha+\frac 1q} \rho(m_*j)$.
\item Let $\theta>0$, $\kappa_w=\frac{\theta}{q}$, $\kappa=
\theta\left(\frac 1q-\frac 1p\right)_+$, $p\ge q$,
$\alpha>1+(1-\gamma) \left(\frac 1q-\frac 1p\right)_+$. Then
$C(j)=2^{-\theta\left(\frac 1q-\frac 1p\right) m_*j}
(m_*j)^{-\alpha+1+\frac 1q-\frac 1p} \rho(m_*j)$.
\end{enumerate}

Applying \ref{diskr_case}, we get results similar to Theorems
\ref{th1}, \ref{th3}.

\begin{Trm}
\label{dis_t1} Let $\theta>0$.
\begin{enumerate}
\item Suppose that $\kappa>\theta\left(\frac 1q-\frac
1p\right)_+$. We set $\alpha_0=\alpha$ for
$\kappa_w>\frac{\theta}{q}$ and $\alpha_0=\alpha-\frac 1q$ for
$\kappa_w=\frac{\theta}{q}$. Then
$$
e_n(S_{u,w,{\cal A}}:l_p({\cal A}) \rightarrow l_q({\cal A}))
\underset{\mathfrak{Z}}{\asymp} n^{-\frac{\kappa}{\theta}+\frac
1q-\frac 1p}(\log n)^{-\alpha_0-\frac{\kappa\gamma}{\theta}}
\rho(\log n) \tau^{-\frac{\kappa}{\theta}}(\log n).
$$
\item Suppose that $\kappa=\theta\left(\frac 1q-\frac
1p\right)_+$.
\begin{enumerate}
\item Let $p\ge q$ and $\alpha_0:=\alpha-(1-\gamma)\left(\frac 1q-\frac
1p\right)>0$ for $\kappa_w>\frac{\theta}{q}$,
$\alpha_0:=\alpha-1-(1-\gamma)\left(\frac 1q-\frac 1p\right)>0$
for $\kappa_w=\frac{\theta}{q}$. Then
$$
e_n(S_{u,w,{\cal A}}:l_p({\cal A}) \rightarrow l_q({\cal A}))
\underset{\mathfrak{Z}}{\asymp} (\log n)^{-\alpha_0} \rho(\log n)
\tau^{-\frac 1q+\frac 1p}(\log n).
$$
\item Let $p<q$, $\alpha_0:=\alpha>0$ for $\kappa_w>\frac{\theta}{q}$
and $\alpha_0:=\alpha-\frac 1q>0$ for $\kappa_w=\frac{\theta}{q}$.
In addition, suppose that $\alpha_0\ne \frac 1p-\frac 1q$. Then
$$
e_n(S_{u,w,{\cal A}}:l_p({\cal A}) \rightarrow l_q({\cal A}))
\underset{\mathfrak{Z}}{\asymp} n^{\frac 1q-\frac 1p} (\log
n)^{-\alpha_0-\frac 1q+\frac 1p} \rho(\log n)
$$
if $\alpha_0>\frac 1p-\frac 1q$,
$$
e_n(S_{u,w,{\cal A}}:l_p({\cal A}) \rightarrow l_q({\cal A}))
\underset{\mathfrak{Z}}{\asymp} n^{-\alpha_0} \rho(n)
$$
if $\alpha_0<\frac 1p-\frac 1q$.
\end{enumerate}
\end{enumerate}
\end{Trm}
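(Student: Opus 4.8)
The plan is to derive Theorem \ref{dis_t1} from the abstract estimates of \S 3 in the same way that Theorems \ref{th1}--\ref{th3} are derived there, with the weighted Sobolev space on $\Omega$ replaced by the sequence space on the tree itself. For the upper bound I would take $\Omega={\bf V}({\cal A})$ with the counting measure, so that $\hat\Theta=\{\{\eta_{j,i}\}\}$ is the partition of $\Omega$ into atoms and $\hat F(\eta_{j,i})=\{\eta_{j,i}\}$; put $Y_q(\Omega)=l_q({\cal A})$, ${\cal P}(\Omega)=\{c\,w:\,c\in\R\}$ (hence $r_0=1$), let $X_p(\Omega)$ be the range of $S_{u,w,{\cal A}}$ with seminorm $\|S_{u,w,{\cal A}}g\|_{X_p(\Omega)}:=\inf\{\|g'\|_{l_p({\cal A})}:\,S_{u,w,{\cal A}}(g-g')\in{\cal P}(\Omega)\}$, whose kernel is exactly ${\cal P}(\Omega)$, and equip restrictions $X_p(\Omega_{{\cal A}'})$ with the analogous seminorm built from $S_{u,w,{\cal A}'}$. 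Properties 1--5 and the compatibility relations \eqref{f_xp}, \eqref{f_yq} then hold. With this choice the embedding $I:\hat X_p(\Omega)\to Y_q(\Omega)$ differs from $S_{u,w,{\cal A}}:l_p({\cal A})\to l_q({\cal A})$ only by the finite-rank operator given by the coordinates at the root(s), whose operator norm is finite by \eqref{muck_1} and whose entropy numbers decay geometrically; since the target quantities decay much more slowly than geometrically in $n$, $e_n(S_{u,w,{\cal A}})\underset{\mathfrak{Z}}{\asymp}e_n(I)$, so it suffices to estimate $e_n(I)$.

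Because each $\hat F(\xi)$ is an atom, Assumption \ref{sup2} is replaced by the atomic condition of Remark \ref{diskr_case}, so I may use Theorems \ref{trm1}, \ref{trm2} with $\delta_*=+\infty$; in particular \eqref{delpq} is automatic and the refinement operators $P_{t,m}$ drop out of the argument. It then remains to check Assumptions \ref{sup1} and \ref{sup3}. For Assumption \ref{sup1} I would let $P_{\Omega_{{\cal A}_{\hat\xi}}}f$ be the ${\cal P}(\Omega)$-component of $f$ relative to the subtree ${\cal A}_{\hat\xi}$ and set $w_*(\hat\xi)=\mathfrak{S}^{p,q}_{{\cal A}_{\hat\xi},u,w}$: then \eqref{f_pom_f} is precisely the definition of the operator norm of the restricted summation operator, and \eqref{muck_1} is exactly what guarantees its finiteness together with the stated equivalence $\mathfrak{S}^{p,q}_{{\cal A}_{\eta_{j,i}},u,w}\underset{\mathfrak{Z}}{\asymp}C(j)$. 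For Assumption \ref{sup3} I would take $\{{\cal A}_{t,i}\}$ to be the connected components of the maximal subgraph on the vertices $\{\eta_{j,l}:\,k_*(t-1)<m_*j\le k_*t\}$ in assertion 1 (with $k_*\in\N$ chosen appropriately) and on $\{\eta_{j,l}:\,2^{t-1}<m_*j\le 2^t\}$ in the critical case of assertion 2, so that, by \eqref{def_h} and the growth bound ${\rm card}\,{\bf V}^{\cal A}_l(\eta_{j,i})\asymp h(2^{-m_*j})/h(2^{-m_*(j+l)})$ (cf.\ \eqref{ctij}), \eqref{nu_t_k}, resp.\ \eqref{nu_t_k1}, holds with $\gamma_*=\theta$ and $\psi_*(y)=(\log y)^{-\gamma}\tau^{-1}(\log y)$; the data $\lambda_*,\mu_*,u_*$ are read off from $C(j)$ and from $u_jw_j$ exactly as in Cases 1--4 of the proof of Theorems \ref{th1}, \ref{th3}, and the consistency conditions \eqref{bipf4684gn}, \eqref{2l}, \eqref{2ll} follow, as there, from $\gamma\le0$, from $\kappa\ge\theta(1/q-1/p)_+$, and from \eqref{muck_1}. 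Feeding this into Theorems \ref{trm1}, \ref{trm2} and simplifying with Lemmas \ref{obr} and \ref{log} yields the upper bounds in assertions 1, 2a, 2b.

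For the lower bounds I would apply Lemma \ref{low} (and, for $p>q$, Corollary \ref{low_cor}) to the Kronecker-delta test sequences $\psi_{j,i}=\delta_{\eta_{j,i}}$, $i\in I_j$: these satisfy $\|\psi_{j,i}\|_{l_p({\cal A})}=1$, have pairwise disjoint supports ${\bf V}({\cal A}_{\eta_{j,i}})$ since vertices of one generation are pairwise incomparable, and $\|S_{u,w,{\cal A}}\psi_{j,i}\|_{l_q({\cal A}_{\eta_{j,i}})}=u_j\big(\sum_{l\ge0}{\rm card}\,{\bf V}^{\cal A}_l(\eta_{j,i})\,w_{j+l}^q\big)^{1/q}\underset{\mathfrak{Z}}{\asymp}C(j)$, where convergence of the series and this equivalence are again exactly what \eqref{muck_1} provides. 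In assertion 1, choosing the generation $j=j_n$ with ${\rm card}\,I_{j_n}\underset{\mathfrak{Z}}{\asymp}n$ gives $m_*j_n\asymp\log n$ and, by Lemma \ref{log}, $2^{m_*j_n}\asymp n^{1/\theta}(\log n)^{\gamma/\theta}\tau^{1/\theta}(\log n)$; Lemma \ref{low} combined with Theorem \ref{shutt_trm} at $k=n\asymp m={\rm card}\,I_{j_n}$ then yields $e_n(I)\gtrsim C(j_n)\,n^{1/q-1/p}$, which matches the upper bound. In the critical case $\kappa=\theta(1/q-1/p)_+$ of assertion 2 I would argue exactly as for assertions 2a, 2b of Theorem \ref{th1}: for $p\ge q$ use Corollary \ref{low_cor}, summing $\|\psi_{j,i}\|^{pq/(p-q)}$ over $i\in I_j$ and over $j\ge j_n$; for $p<q$ use Lemma \ref{low} with the scale $j_n$ fixed by ${\rm card}\,I_{j_n}\asymp n^2$ (resp.\ $\asymp 2^n$) and the middle (resp.\ last) regime of Theorem \ref{shutt_trm}.

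The hard part is the verification of Assumption \ref{sup3} for the block partition in each parameter regime of assertions 1, 2a, 2b: one must exhibit a single pair $u_*,\psi_*$ and exponents $\lambda_*\le\mu_*$ compatible with both $w_*(\xi)\asymp C(j)$ in \eqref{w_s_2} and the local estimate $\tilde w_*(\xi)\lesssim u_jw_j$ in \eqref{til_w_s_2}, and verify \eqref{bipf4684gn}, \eqref{2l}, \eqref{2ll}. This is entirely parallel to the corresponding bookkeeping in the proof of Theorems \ref{th1}, \ref{th3} (with the "dimension" $d$ there now played by $\theta$, and the Poincaré constant $C(j_0)$ by the operator norm $C(j)$), and the discrete setting, having $\delta_*=+\infty$, is if anything simpler. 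A lesser point is matching the lower and upper bounds in the critical case, which requires picking the correct length scale $j_n$ (with ${\rm card}\,I_{j_n}\asymp n$, $n^2$, or $2^n$) according to the sign of $p-q$ and of $\alpha_0-(1/p-1/q)$.
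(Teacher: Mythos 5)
Your overall strategy --- realize $S_{u,w,{\cal A}}$ as an embedding $\hat X_p(\Omega)\hookrightarrow Y_q(\Omega)$ in the atomic version of the abstract framework, apply Remark~\ref{diskr_case} together with Theorems~\ref{trm1}, \ref{trm2}, verify Assumptions~\ref{sup1} and \ref{sup3} using $w_*(\hat\xi)=\mathfrak{S}^{p,q}_{{\cal A}_{\hat\xi},u,w}\asymp C(j)$, and prove lower bounds with test sequences supported on the disjoint subtrees ${\cal A}_{\eta_{j,i}}$ --- is exactly the paper's intended route, which the paper compresses into a single sentence. The upper bound and the treatment of assertions~1 and~2b look sound.

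However, the lower bound in assertion~2a contains a concrete error. You claim
$$
\|S_{u,w,{\cal A}}\,\delta_{\eta_{j,i}}\|_{l_q({\cal A}_{\eta_{j,i}})}
= u_j\Bigl(\sum_{l\ge0}{\rm card}\,{\bf V}^{\cal A}_l(\eta_{j,i})\,w_{j+l}^q\Bigr)^{1/q}
\underset{\mathfrak{Z}}{\asymp} C(j).
$$
A direct computation (geometric decay of the series for $\kappa_w>\theta/q$, and a logarithmic-power sum for $\kappa_w=\theta/q$) gives
$\|S_{u,w,{\cal A}}\,\delta_{\eta_{j,i}}\|_{l_q({\cal A}_{\eta_{j,i}})}\asymp u_jw_j$ when $\kappa_w>\theta/q$, and $\asymp u_jw_j(m_*j)^{1/q}$ when $\kappa_w=\theta/q$. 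In the critical regime $\kappa=\theta(1/q-1/p)_+$ with $p\ge q$ (cases 2 and 4 of the list of $C(j)$), the operator norm $C(j)$ carries the extra factor $(m_*j)^{(1/q-1/p)_+}$ (respectively $(m_*j)^{1-1/p}$), so the Kronecker deltas fall short of the restricted operator norm by a power of $\log n$; for $p>q$ the near-extremal inputs of $S_{u,w,{\cal A}_{\eta_{j,i}}}:l_p\to l_q$ spread over many generations and are not deltas. If you nevertheless insert the incorrect value $M_{j,i}\asymp C(j)$ into the $\omega_n$ formula of Corollary~\ref{low_cor}, you obtain a lower bound that exceeds the upper bound. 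Moreover, summing deltas across generations via Corollary~\ref{low_cor} also runs into the disjointness hypothesis: the image supports ${\cal A}_{\eta_{j,i}}$ for different generations are nested, not disjoint, so the diagonalizing factorization through $l_p^{\mathbb N}\to l_q^{\mathbb N}$ does not apply as written. The fix is to replace the deltas, in the critical case $p\ge q$, by near-extremizers $\psi_{j,i}\in l_p({\cal A}_{\eta_{j,i}})$ of the restricted summation operators at a single generation $j_n$ with $m_*j_n\asymp\log n$ (these automatically have $\supp S_{u,w}\psi_{j,i}\subset{\cal A}_{\eta_{j,i}}$ pairwise disjoint), and then use Lemma~\ref{low} with $M\asymp C(j_n)$; this yields $C(j_n)\,n^{1/q-1/p}$, which matches the claimed rate. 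This mirrors what the paper does for Theorem~\ref{th1} in the critical case $\beta_v=(d-\theta)/q$, where the test functions $\psi_{t,j}$ are specially constructed rather than concentrated at a single scale.

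A smaller point: the operator $C(j_0)$ in (\ref{cj0db1}) and the listed cases of $C(j)$ show that the paper already knows the restricted operator norms exceed the pointwise product $u_jw_j$ precisely in the critical $p\ge q$ regime; your "hard part" paragraph anticipates the bookkeeping difficulty but not this qualitative difference between deltas and extremizers.
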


If $\theta=0$, $\kappa=0$, then we suppose that $\rho_u(t)=|\log
(t+2)|^{-\lambda_u}$, $\rho_w=|\log (t+2)|^{-\lambda_w}$,
$\tau(t)=|\log (t+2)|^{\nu}$. Denote $\lambda
=\lambda_u+\lambda_w$. If $\kappa_w>0$,
$\alpha-(1-\gamma)\left(\frac 1q-\frac 1p\right)_+>0$, then the
bounds for $C(j)$ from the sharp two-sided estimate of
$\mathfrak{S}^{p,q}_{{\cal A}_{\eta_{j,i}},u,w}$ are already
obtained. If $\kappa_w>0$, $\alpha-(1-\gamma)\left(\frac 1q-\frac
1p\right)_+=0$, $\lambda>(1-\nu)\left(\frac 1q-\frac 1p\right)_+$,
then $C(j)$ for $j\ge 2$ is defined as follows. If $p\le q$, then
$C(j)=[\log(m_*j)]^{-\lambda}$; if $p>q$, then
$$C(j)=(m_*j)^{\gamma\left(\frac 1q-\frac 1p\right)} [\log(m_*j)]
^{-\lambda+\frac 1q-\frac 1p}.$$ It follows from estimates in
\cite{vas_vl_raspr2}, \cite{vas_har_tree}.

\begin{Trm} \label{dis_t2}
Suppose that $\theta=0$, $\kappa=0$, $\kappa_w>0$.
\begin{enumerate}
\item Let $\alpha-(1-\gamma)\left(\frac 1q-\frac 1p\right)_+>0$.
Then
$$
e_n(S_{u,w,{\cal A}}:l_p({\cal A}) \rightarrow l_q({\cal A}))
\underset{\mathfrak{Z}}{\asymp} n^{-\frac{\alpha}{1-\gamma}+\frac
1q-\frac 1p} (\log n)^{-\lambda-\frac{\alpha\nu}{1-\gamma}}.
$$
\item Let $\alpha-(1-\gamma)\left(\frac 1q-\frac 1p\right)_+=0$,
$\lambda>(1-\nu)\left(\frac 1q-\frac 1p\right)_+$.
\begin{enumerate}
\item Suppose that $p\ge q$. Then
$$
e_n(S_{u,w,{\cal A}}:l_p({\cal A}) \rightarrow l_q({\cal A}))
\underset{\mathfrak{Z}}{\asymp} (\log
n)^{-\lambda+(1-\nu)\left(\frac 1q-\frac 1p\right)}.
$$
\item Let $p<q$, $\lambda\ne \frac 1p-\frac 1q$. Then
$$
e_n(S_{u,w,{\cal A}}:l_p({\cal A}) \rightarrow l_q({\cal A}))
\underset{\mathfrak{Z}}{\asymp} n^{\frac 1q-\frac 1p} (\log
n)^{-\lambda+\frac 1p-\frac 1q}
$$
for $\lambda>\frac 1p-\frac 1p$,
$$
e_n(S_{u,w,{\cal A}}:l_p({\cal A}) \rightarrow l_q({\cal A}))
\underset{\mathfrak{Z}}{\asymp} n^{-\lambda}
$$
for $\lambda<\frac 1p-\frac 1q$.
\end{enumerate}
\end{enumerate}
\end{Trm}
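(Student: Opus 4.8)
The plan is to deduce Theorem~\ref{dis_t2} from the abstract results of \S3 in exactly the way Theorem~\ref{th3} was deduced, the only change being that the ``atomic'' form of the framework applies --- each $\hat F(\xi)$ is a single point, so Remark~\ref{diskr_case} is in force and $\delta_*=+\infty$. First I would realise $S_{u,w,{\cal A}}$ as the operator $I$ of \S3: take the measure space whose atoms are the vertices of ${\cal A}$, $\hat F(\eta_{j,i})=\{\eta_{j,i}\}$, $Y_q(\Omega)=l_q({\cal A})$, $X_p(\Omega)=\{S_{u,w,{\cal A}}f:\ f\in l_p({\cal A})\}$ with $\|S_{u,w,{\cal A}}f\|_{X_p}:=\|f\|_{l_p({\cal A})}$, and ${\cal P}(\Omega)={\rm span}\,\{w\}$ (which has zero $X_p$-seminorm). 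The key algebraic fact is the splitting $S_{u,w,{\cal A}}f|_{\cal D}=c\,w|_{\cal D}+S_{u,w,{\cal D}}(f|_{\cal D})$ for a subtree ${\cal D}$ rooted at $\eta_{j_0,i_0}$ and $f$ supported on it, with $c=\sum_{\xi'<\eta_{j_0,i_0}}u(\xi')f(\xi')$ constant on ${\cal D}$: the rank-one projection that removes $c\,w|_{\cal D}$ satisfies Assumption~\ref{sup1} with $w_*(\eta_{j,i})\asymp\mathfrak{S}^{p,q}_{{\cal A}_{\eta_{j,i}},u,w}\asymp C(j)$, the two-sided estimate being the one quoted before the statement (from \cite{vas_vl_raspr2,vas_har_tree}). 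With these choices Assumption~\ref{sup2} is the atomic condition of Remark~\ref{diskr_case}, and $\tilde w_*$ may be prescribed freely, constrained only by $\mu_*\ge\lambda_*$ (see (\ref{mu_ge_lambda})) and by (\ref{bipf4684gn}), (\ref{2l}), (\ref{2ll}), since the error term in (\ref{fpef}) vanishes on atoms.

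For the upper bounds I would then partition ${\cal A}$ into blocks $\Gamma_t$ consisting of the vertices $\eta_{j,i}$ with $m_*j$ in a dyadic range $(2^{t-1},2^t]$ in assertions~1 and 2(a), and in a doubly exponential range $(2^{2^{t-1}},2^{2^t}]$ in assertion~2(b), so that $k_*=1$ and (\ref{nu_t_k}) holds in the first two cases while (\ref{nu_t_k1}) holds in the third. Since $h$ has the form (\ref{def_h}) with $\theta=0$ (and $\tau(|\log t|)$ contributes a doubly logarithmic factor), the hypothesis on the cardinalities of ${\bf V}^{\cal A}_l(\eta_{j,i})$ gives ${\rm card}\,\{\eta_{j,i}:\ m_*j\le N\}\asymp N^{1-\gamma}(\log N)^{-\nu}$, whence $\gamma_*=1-\gamma$, $\psi_*(y)=(\log y)^{-\nu}$, $\beta_*=1/(1-\gamma)$ and $\varphi_*(y)\asymp(\log y)^{\nu/(1-\gamma)}$ by Lemmas~\ref{obr} and \ref{log}. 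Reading off $w_*\asymp C(j)$ on each block, exactly as in the proof of Theorem~\ref{th3}, produces: $\lambda_*=\alpha-(\frac 1q-\frac 1p)_+$, $\mu_*=\alpha$, $u_*(y)=(\log y)^{-\lambda}$ in assertion~1; $\lambda_*=-\gamma(\frac 1q-\frac 1p)$, $\mu_*=\alpha$, $u_*(y)=(\log y)^{-\lambda+\frac 1q-\frac 1p}$ in assertion~2(a) with $p>q$ (and $\lambda_*=\mu_*=0$, $u_*(y)=(\log y)^{-\lambda}$ when $p=q$); $\lambda_*=\mu_*=\lambda$, $u_*\equiv1$ in assertion~2(b). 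Because $\delta_*=+\infty$, the hypotheses of Theorems~\ref{trm1} and \ref{trm2} are automatic, while the monotonicity of $h$ forces $\gamma\le0$ and $\nu\le0$ (the latter when $\gamma=0$), so that (\ref{2l}) and (\ref{2ll}) follow from $\lambda>(1-\nu)(\frac 1q-\frac 1p)_+$ just as in \S4. Substituting into Theorem~\ref{trm1} / Theorem~\ref{trm2} and simplifying with $\mu_*\beta_*=\alpha/(1-\gamma)$ in assertion~1 and $\mu_*\beta_*=\frac 1q-\frac 1p$ in assertion~2(a) gives the claimed upper bounds.

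The lower bounds follow the ``$\theta=0$'' part of the proof of Theorems~\ref{th1}--\ref{th3}, but the disjoint-support reduction of Lemma~\ref{low} must be replaced by a submatrix argument adapted to the non-locality of $S_{u,w,{\cal A}}$. Fix a dyadic block of levels at scale $\asymp2^\ell$ and, inside it, thin out the levels by a constant factor large enough that the geometric tail $2^{-\kappa_w m_*(\cdot)}$ governing the off-diagonal entries becomes small (here $\kappa_w>0$ is used decisively); restricting $S_{u,w,{\cal A}}$ to the span of the point masses at the $N'\asymp2^{(1-\gamma)\ell}\ell^{-\nu}$ surviving vertices and composing with the coordinate projection onto those vertices yields, after diagonal rescaling by $w_ju_j\asymp2^{-\alpha\ell}\ell^{-\lambda}=:M$, a triangular-with-unit-diagonal perturbation of the identity with bounded inverse on $l_q^{N'}$. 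Hence $e_n(S_{u,w,{\cal A}})\gtrsim M\,e_n(I_{N'}:l_p^{N'}\to l_q^{N'})$, and I would finish by applying Theorem~\ref{shutt_trm} with $\ell$ chosen so that $N'\asymp n$, $\asymp n^2$ or $\asymp2^n$ according to the regime of Sch\"{u}tt's estimate needed, and, for $p>q$ in assertion~2(a), the discrete counterpart of Corollary~\ref{low_cor} together with Theorem~\ref{kuhn_trm}; the remaining computations duplicate those in \S4. I expect the main obstacle to be precisely this last point --- transporting the \S4 lower-bound scheme across the non-locality of the summation operator via the triangular perturbation --- together with the verification that ${\cal P}(\Omega)={\rm span}\,\{w\}$ makes Assumption~\ref{sup1} follow from the operator-norm bound; the rest is bookkeeping.
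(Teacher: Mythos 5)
Your upper-bound argument coincides with what the paper does: realize $S_{u,w,{\cal A}}$ inside the abstract framework of \S3, let every $\hat F(\eta_{j,i})$ be an atom, invoke Remark~\ref{diskr_case} so that $\delta_*=+\infty$, and read off $\lambda_*,\mu_*,\gamma_*,\psi_*,u_*$ from the norm estimates $C(j)$, exactly as Theorem~\ref{th3} is proved. Two points in your write-up need correction or could be streamlined.

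First, with the seminorm $\|S_{u,w,{\cal A}}f\|_{X_p}:=\|f\|_{l_p}$ the element $w$ does \emph{not} have zero $X_p$-seminorm: $S_{u,w,{\cal A}}^{-1}w=\delta_{\xi_0}/u(\xi_0)$, so $\|w\|_{X_p}=1/u(\xi_0)\ne 0$. The fix is to take as $X_p$-seminorm the $l_p$-norm of the ``discrete derivative'' $h(\eta)=\frac{1}{u(\eta)}\bigl(\frac{g(\eta)}{w(\eta)}-\frac{g(\pi(\eta))}{w(\pi(\eta))}\bigr)$ (which is additive over any partition of ${\bf V}({\cal A})\setminus\{\xi_0\}$, as condition (\ref{f_xp}) requires); then $w$ really is in the kernel, ${\cal P}={\rm span}\{w\}$ is legitimate, your splitting identity gives Assumption~\ref{sup1} with $w_*(\eta_{j,i})\asymp C(j)$, and the resulting $e_n(I)$ differs from $e_n(S_{u,w,{\cal A}})$ by a rank-one perturbation, hence is comparable by (\ref{eklspt}).

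Second, the lower bound does not need the thinning-plus-triangular-perturbation device. The discrete counterpart of the localized bump functions used for Theorems~\ref{th1}--\ref{th3} is
$$
\phi_\xi=\delta_\xi-\sum_{\xi'\in {\bf V}_1^{\cal A}(\xi)}\frac{u(\xi)}{u(\xi')}\,\delta_{\xi'},
$$
for which $S_{u,w,{\cal A}}\phi_\xi=u(\xi)w(\xi)\,\delta_\xi$ \emph{exactly}: the contributions of $\xi$ and its children cancel on every strict descendant. Since $\theta=0$, the number of children of each vertex is $\asymp 1$ and $u_j/u_{j+1}\asymp 2^{-\kappa_w m_*}$ is a constant, so $\|\phi_\xi\|_{l_p}\asymp 1$. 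The domain supports $\{\xi\}\cup{\bf V}_1^{\cal A}(\xi)$ are pairwise disjoint (for distinct $\xi$) as soon as one skips every second level, and the image supports are singletons, automatically disjoint. Thus Lemma~\ref{low} (and for assertion 2(a), $p>q$, its refinement via Corollary~\ref{low_cor} and Theorem~\ref{kuhn_trm}) applies verbatim with $M\asymp u_jw_j\asymp 2^{-\alpha\ell}\ell^{-\lambda}$ and $N\asymp 2^{(1-\gamma)\ell}\ell^{-\nu}$ at scale $m_*j\asymp 2^\ell$, and the computations then match the \S4 lower-bound proof of Theorem~\ref{th3} line by line. Your perturbation argument is correct (one must check that after thinning the row- and column-sums of the strictly triangular part are $\lesssim 2^{-\kappa_w m_* c}$, and that the diagonal entries over one dyadic block vary only by an $O(1)$ factor), but it is genuinely longer and obscures the exact analogy with the continuous case; the $\phi_\xi$ route is the one the paper implicitly intends.
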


\begin{Biblio}

\bibitem{besov_il1} O.V. Besov, V.P. Il'in, S.M. Nikol'skii,
{\it Integral representations of functions, and imbedding
theorems}. ``Nauka'', Moscow, 1996. [Winston, Washington DC;
Wiley, New York, 1979].

\bibitem{birm}  M.Sh. Birman and M.Z. Solomyak, ``Piecewise polynomial
approximations of functions of classes $W^\alpha_p$'', {\it Mat.
Sb.} {\bf 73}:3 (1967), 331-–355.

\bibitem{m_bricchi1} M. Bricchi, ``Existence and properties of
h-sets'', {\it Georgian Mathematical Journal}, {\bf 9}:1 (2002),
13–-32.

\bibitem{carl_steph} B. Carl, I. Stephani, {\it Entropy, Compactness, and
the Approximation of Operators}. Cambridge Tracts in Mathematics,
V. 98. Cambridge: Cambridge University Press, 1990.

\bibitem{edm_netr1} D.E. Edmunds, Yu.V. Netrusov, ``Entropy numbers of operators
acting between vector-valued sequence spaces'', {\it Math.
Nachr.}, {\bf 286}:5--6 (2013), 614--630.

\bibitem{edm_netr2} D.E. Edmunds, Yu.V. Netrusov, ``Sch\"{u}tt's theorem for
vector-valued sequence spaces'', {\it J. Approx. Theory}, {\bf
178} (2014), 13--21.

\bibitem{edm_trieb_book} D.E. Edmunds, H. Triebel, {\it Function spaces,
entropy numbers, differential operators}. Cambridge Tracts in
Mathematics, {\bf 120} (1996). Cambridge University Press.

\bibitem{har94_1} D.D. Haroske, ``Entropy numbers in weighted function spaces and
eigenvalue distributions of some degenerate pseudodifferential
operators. I'', {\it Math. Nachr.}, {\bf 167} (1994), 131--156.

\bibitem{har94_2} D.D. Haroske, ``Entropy numbers in weighted function spaces and
eigenvalue distributions of some degenerate pseudodifferential
operators. II'', {\it Math. Nachr.}, {\bf 168} (1994), 109--137.

\bibitem{haroske} D.D. Haroske, L. Skrzypczak, ``Entropy and approximation
numbers of function spaces with Muckenhoupt weights'', {\it Rev.
Mat. Complut.}, {\bf 21}:1 (2008), 135--177.

\bibitem{haroske2} D.D. Haroske, L. Skrzypczak, ``Entropy and approximation numbers of embeddings of
function spaces with Muckenhoupt weights, II.  General weights'',
{\it Ann. Acad. Sci. Fenn. Math.}, {\bf 36}:1 (2011), 111–138.

\bibitem{haroske3} D.D. Haroske, L. Skrzypczak, ``Entropy numbers of embeddings of
function spaces with Muckenhoupt weights, III. Some limiting
cases,'' {\it J. Funct. Spaces Appl.} {\bf 9}:2 (2011), 129–178.

\bibitem{haroske4} D.D. Haroske, L. Skrzypczak, ``Spectral theory of some degenerate elliptic operators
with local singularities'', {\it J. Math. An. Appl.}, {\bf 371}:1
(2010), 282--299.

\bibitem{har_tr05} D.D. Haroske, H. Triebel, ``Wavelet bases and entropy numbers in
weighted function spaces'', {\it Math. Nachr.}, {\bf 278}:1--2
(2005), 108--132.

\bibitem{kolm_tikh1} A.N. Kolmogorov, V.M. Tikhomirov, ``$\varepsilon$-entropy and $\varepsilon$-capacity
of sets in function spaces'' (Russian) {\it Uspehi Mat. Nauk},
{\bf 14}:2(86) (1959), 3--86.

\bibitem{kuhn_01_g} T. K\"{u}hn, ``Entropy numbers of diagonal operators of logarithmic
type'', {\it Georgian Math. J.} {\bf 8}:2 (2001), 307-318.

\bibitem{kuhn_01} T. K\"{u}hn, ``A lower estimate for entropy
numbers'', {\it J. Appr. Theory}, {\bf 110} (2001), 120--124.

\bibitem{kuhn_05} T. K\"{u}hn, ``Entropy numbers of general diagonal
operators'', {\it Rev. Mat. Complut.}, {\bf 18}:2 (2005),
479--491.

\bibitem{kuhn4} Th. K\"{u}hn, H.-G. Leopold, W. Sickel, and L. Skrzypczak. ``Entropy numbers of embeddings
of weighted Besov spaces'', {\it Constr. Approx.}, {\bf 23}
(2006), 61–77.

\bibitem{kuhn_leopold} Th. K\"{u}hn, H.-G. Leopold, W. Sickel, L.
Skrzypczak, ``Entropy numbers of embeddings of weighted Besov
spaces II'', {\it Proc. Edinburgh Math. Soc.} (2) {\bf 49} (2006),
331--359.

\bibitem{kuhn_tr_mian} T. K\"{u}hn, ``Entropy Numbers in Weighted Function Spaces.
The Case of Intermediate Weights'', {\it Proc. Steklov Inst.
Math.}, {\bf 255} (2006), 159--168.

\bibitem{kuhn5} Th. K\"{u}hn, H.-G. Leopold, W. Sickel, and L. Skrzypczak, ``Entropy numbers of embeddings
of weighted Besov spaces III. Weights of logarithmic type'', {\it
Math. Z.}, {\bf 255}:1 (2007), 1–15.

\bibitem{kuhn_08} T. K\"{u}hn, ``Entropy numbers in sequence spaces with an application to weighted
function spaces'', {\it J. Appr. Theory}, {\bf 153} (2008),
40--52.

\bibitem{li_linde} W.V. Li, W. Linde, ``Approximation, metric entropy and
small ball estimates for Gaussian measures'', {\it Ann. Probab.},
{\bf 27}:3 (1999), 1556--1578.

\bibitem{lif_linde} M.A. Lifshits, W. Linde, ``Approximation and entropy numbers of Volterra operators
with application to Brownian motion'', {\it Mem. Amer. Math.
Soc.}, {\bf 157}:745, Amer. Math. Soc., Providence, RI, 2002.

\bibitem{lifs_m} M.A. Lifshits, ``Bounds for entropy numbers for some critical
operators'', {\it Trans. Amer. Math. Soc.}, {\bf 364}:4 (2012),
1797–1813.

\bibitem{l_l} M.A. Lifshits, W. Linde, ``Compactness properties of weighted summation operators
on trees'', {\it Studia Math.}, {\bf 202}:1 (2011), 17--47.

\bibitem{l_l1} M.A. Lifshits, W. Linde, ``Compactness properties of weighted summation operators
on trees --- the critical case'', {\it Studia Math.}, {\bf 206}:1
(2011), 75--96.

\bibitem{step_lom} E.N. Lomakina, V. D. Stepanov, ``Asymptotic estimates for the approximation and entropy
numbers of the one-weight Riemann–Liouville operator'', {\it Mat.
Tr.}, {\bf 9}:1 (2006), 52–100 [{\it Siberian Adv. Math.}, {\bf
17}:1 (2007), 1–36].

\bibitem{p_mattila} P. Mattila, {\it Geometry of sets
and measures in Euclidean spaces}. Cambridge Univ. Press, 1995.

\bibitem{mieth_15} T. Mieth, ``Entropy and approximation numbers of embeddings of
weighted Sobolev spaces'', {\it J. Appr. Theory}, {\bf 192}
(2015), 250--272.

\bibitem{piet_op} A. Pietsch, {\it Operator ideals}. Mathematische Monographien
[Mathematical Monographs], 16. Berlin, 1978. 451 pp.

\bibitem{resh1} Yu.G. Reshetnyak, ``Integral representations of
differentiable functions in domains with a nonsmooth boundary'',
{\it Sibirsk. Mat. Zh.}, {\bf 21}:6 (1980), 108--116 (in Russian).

\bibitem{resh2}  Yu.G. Reshetnyak, ``A remark on integral representations
of differentiable functions of several variables'', {\it Sibirsk.
Mat. Zh.}, {\bf 25}:5 (1984), 198--200 (in Russian).

\bibitem{c_schutt} C. Sch\"{u}tt, ``Entropy numbers of diagonal operators between symmetric
Banach spaces'', {\it J. Appr. Theory}, {\bf 40} (1984), 121--128.

\bibitem{tikh_entr} V.M. Tikhomirov, ``The $\varepsilon$-entropy of certain classes of periodic functions''
(Russian) {\it Uspehi Mat. Nauk} {\bf 17}:6 (108) (1962),
163--169.

\bibitem{tr_jat} H. Triebel, ``Entropy and approximation numbers of limiting embeddings, an approach
via Hardy inequalities and quadratic forms'', {\it J. Approx.
Theory}, {\bf 164}:1 (2012), 31--46.

\bibitem{vas_john} A.A. Vasil'eva, ``Widths of weighted Sobolev classes on a John domain'',
{\it Proc. Steklov Inst. Math.}, {\bf 280} (2013), 91--119.

\bibitem{vas_vl_raspr} A.A. Vasil'eva, ``Embedding theorem for weighted Sobolev
classes on a John domain with weights that are functions of the
distance to some $h$-set'', {\it Russ. J. Math. Phys.}, {\bf 20}:3
(2013), 360--373.

\bibitem{vas_vl_raspr2} A.A. Vasil'eva, ``Embedding theorem for weighted Sobolev
classes on a John domain with weights that are functions of the
distance to some $h$-set'', {\it Russ. J. Math. Phys.} {\bf 21}:1
(2014), 112--122.

\bibitem{vas_sing} A.A. Vasil'eva, ``Widths of weighted Sobolev classes
on a John domain: strong singularity at a point'',  {\it Rev. Mat.
Compl.}, {\bf 27}:1 (2014), 167--212.

\bibitem{vas_width_raspr} A.A. Vasil'eva, ``Widths of function classes on sets with tree-like
structure'', {\it J. Appr. Theory}, {\bf 192} (2015), 19--59.

\bibitem{vas_bes} A.A. Vasil'eva, ``Kolmogorov and linear
widths of the weighted Besov classes with singularity at the
origin'', {\it J. Appr. Theory}, {\bf 167} (2013), 1--41.

\bibitem{vas_sib} A.A. Vasil'eva, ``Some sufficient conditions for
embedding a weighted Sobolev class on a John domain'', {\it Sib.
Mat. Zh.}, {\bf 56}:1 (2015), 65--81.

\bibitem{vas_w_lim} A.A. Vasil'eva, ``Widths of weighted Sobolev classes with weights
that are functions of distance to some h-set: some limiting
cases'', arXiv:1502.01014v1.

\bibitem{vas_har_tree} A.A. Vasil'eva, ``Estimates for norms of two-weighted summation operators
on a tree under some restrictions on weights'', {\it Math. Nachr.}
1--24 (2015), /DOI 10.1002/mana.201300355.

\bibitem{peak_lim} A.A. Vasil'eva, ``Widths of weighted Sobolev classes on a domain
with a peak: some limiting cases'', arXiv:1312.0081.

\end{Biblio}
\end{document}